\pgfplotsset{compat=1.11}
\newcommand{\EMPTY}[1]{}
\newtheorem{Theorem}[subsection]{Theorem}
\newtheorem{Lemma}[subsection]{Lemma}
\newtheorem{Corollary}[subsection]{Corollary}
\newtheorem{Proposition}[subsection]{Proposition}
\theoremstyle{definition}
\newtheorem{Definition}[subsection]{Definition}
\theoremstyle{remark}
\newtheorem{Remark}[subsection]{Remark}
\numberwithin{equation}{section}
\begin{document}
\pagestyle{fancy}
\fancyhf{} 
\renewcommand{\headrulewidth}{0pt}
\fancyhf[EHC]{B. Harvie}
\fancyhf[OHC]{Weak IMCF and Minkowski Inequalities in Hyperbolic Space}
\fancyhf[FC]{\thepage}

\title{On Weak Inverse Mean Curvature Flow and Minkowski-type Inequalities in Hyperbolic Space}
\author{Brian Harvie}
\address{Copenhagen Center for Geometry and Topology}
\email{brdh@math.ku.dk}

\date{\today}

\begin{abstract}
We prove that a proper weak solution $\{ \Omega_{t} \}_{0 \leq t < \infty}$ to inverse mean curvature flow in hyperbolic space $\mathbb{H}^{n}$, $3 \leq n \leq 7$ , is eventually smooth and star-shaped for an arbitrary initial domain $\Omega_{0}$. In fact, this happens by the time

\begin{equation*}
    T= (n-1) \log \left( \frac{\text{sinh} \left( r_{+} \right)}{ \text{sinh} \left( r_{-} \right)} \right),
\end{equation*}
where $r_{+}$ and $r_{-}$ are the geodesic out-radius and in-radius of $\Omega_{0}$. The approach is based on an Alexandrov reflection method for extrinsic curvature flows originally introduced by Chow-Gulliver \cite{CG01}. In addition, our methods characterize expanding spheres as proper weak IMCF on $\mathbb{H}^{n} \setminus \{ 0 \}$ for arbitrary $n$, thereby implying a result for ancient smooth solutions.

As applications of the regularity theorem, we derive optimal Minkowski inequalities for arbitrary smooth domains of $\mathbb{H}^{n}$, $3 \leq n \leq 7$, extending those of Brendle-Hung-Wang \cite{BHW12} and De Lima-Girao \cite{LG12}. From this, we also extend the Riemannian Penrose inequality from \cite{LG12} to balanced asymptotically graphs over the exteriors of outer-minimizing domains in $\mathbb{H}^{n}$.
\end{abstract}
\maketitle
\section{Introduction}
Hyperbolic space is the Riemannian manifold of constant negative curvature given by

\begin{equation} \label{hyperbolic_space}
     (\mathbb{H}^{n},g_{\mathbb{H}^{n}}) =(\mathbb{S}^{n-1} \times (0,\infty), dr \otimes dr + \sinh^{2}(r) g_{\mathbb{S}^{n-1}}),
\end{equation}
where $g_{\mathbb{S}^{n-1}}$ is the unit sphere metric. This paper considers the inverse mean curvature flow (IMCF) of hypersurfaces in hyperbolic space. Given a closed smooth manifold $\Sigma^{n-1}$, a one-parameter family of $C^{\infty}$ embeddings $X: \Sigma^{n-1} \times [0,T) \rightarrow \mathbb{H}^{n}$ solves IMCF if

\begin{equation} \label{IMCF}
    \frac{\partial}{\partial t} X(x,t) = \frac{1}{H} \nu(x,t), \hspace{2cm} (x,t) \in \Sigma^{n-1} \times [0,T),
\end{equation}
where $\nu$ is the outward unit normal and $H>0$ the mean curvature of the hypersurface $\Sigma_{t}=X_{t}(\Sigma^{n-1}) \subset \mathbb{H}^{n}$. IMCF enjoys a wide variety of geometric applications, but a key obstacle to these is the formation of singularities which terminate the flow in finite time. In \cite{HI99}, Huisken and Ilmanen introduced a notion of weak solutions of IMCF which flow beyond singularities. Given a bounded domain $\Omega_{0} \subset \mathbb{H}^{n}$ with connected $C^{\infty}$ boundary $\partial \Omega_{0}= \Sigma_{0}$, a proper weak solution to IMCF with initial condition $\Omega_{0}$ corresponds to a proper locally Lipschitz function $u: \mathbb{H}^{n} \rightarrow \mathbb{R}$ which solves the degenerate-elliptic exterior Dirichlet problem

\begin{eqnarray} \label{dirichlet}
    \text{div} \left( \frac{\nabla u}{|\nabla u} \right) &=& |\nabla u| \hspace{1cm} \text{in} \hspace{0.5cm} \mathbb{H}^{n} \setminus \overline{\Omega_{0}},  \\
    u &=& 0 \hspace{1.4cm} \text{on} \hspace{0.5cm} \Sigma_{0} \nonumber
\end{eqnarray}
in a variational sense, with weak flow surfaces $\Sigma_{t}$ defined by $\Sigma_{t}= \partial \{ u < t \}$.

For a smooth solution $u$ to \eqref{dirichlet},  $\Sigma_{t}$ are smooth, strictly mean-convex hypersurfaces and form a solution to \eqref{IMCF}, but for a general weak solution $\Sigma_{t}$ are only $C^{1,\alpha}$ with a possible singular set in higher dimensions. Furthermore, a phenomenon may occur where $\Sigma_{t}$ ``jumps" outward at certain times-- we discuss this more thoroughly in Section 2. Understanding the long-time behavior of proper weak IMCF is crucial to its geometric applications. For weak IMCF in $\mathbb{R}^{n}$, a blow-down lemma due to Huisken and Ilmanen (Lemma 7.1 in \cite{HI99}) ensures that the weak flow surfaces $\Sigma_{t}$ converge to a round sphere in a $C^{1}$ sense after scaling as $t \rightarrow \infty$. No such blow-down lemma applies to IMCF in $\mathbb{H}^{n}$, and an explicit example from \cite{HW14} shows that even smooth IMCF in $\mathbb{H}^{n}$ may not become round as $t \rightarrow \infty$, see also \cite{N10}. For weak IMCFs in $\mathbb{H}^{n}$, the asymptotic profile of $\Sigma_{t}$, including even the eventual topology, is unclear a priori.

In this paper, we show that $\Sigma_{t}$ are star-shaped after an explicit time $T$ determined by the in-radius and out-radius of the initial data. In dimensions less than $8$, this implies that $\Sigma_{t}$ form a classical solution to \eqref{IMCF} for $t \in (T,\infty)$, meaning in particular that the flow is smooth and free of jumps beyond time $T$. Our regularity result follows earlier work on the regularity of weak IMCF in $\mathbb{R}^{n}$ by Huisken-Ilmanen \cite{HI08} and in asymptotically hyperbolic $3$-manifolds by Shi-Zhu \cite{SZ21}.

\begin{Theorem} [Regularity of Weak IMCF in Hyperbolic Space] \label{star_shape}
Let $u \in C^{0,1}_{\text{loc}}(\mathbb{H}^{n})$ be a proper weak solution to IMCF with initial condition $\Omega_{0}$, and say WLOG that $0 \in \Omega_{0}$. Call $r_{+}= \max_{\Sigma_{0}} r$, $r_{-}=\min_{\Sigma_{0}} r$ and define

\begin{equation} \label{waiting_time}
    T= (n-1) \log \left( \frac{\sinh(r_{+})}{\sinh(r_{-})} \right).
\end{equation}
Then for each $t \in (T,\infty)$, $\Sigma_{t}= \partial \{ u < t \}$ is star-shaped, i.e. $\Sigma_{t}= \{ (r_{t}(\theta),\theta) | \theta \in \mathbb{S}^{n-1} \} \subset \mathbb{H}^{n}$ for a function $r_{t}$ on $\mathbb{S}^{n-1}$. Moreover, the function $r_{t}$ is Lipschitz and, where differentiable, satisfies the gradient estimate

\begin{equation} \label{support_bound}
|Dr_{t}|_{\mathbb{S}^{n-1}} \leq \frac{\sinh(r_{+}) \sinh(r_{t})}{\left(\sinh^{2}(r_{t}) - \sinh^{2}(r_{+}) \right)^{\frac{1}{2}}}.  
\end{equation}
Finally, if $3 \leq n \leq 7$ then $\Sigma_{t}$ are a smooth solution to \eqref{IMCF} for $t \in (T,\infty)$.
\end{Theorem}

A key application of IMCF is to Minkowski inequalities, which are fundamental geometric inequalities in Riemannian geometry. Recall that the classical Minkowski inequality in Euclidean space for a convex body $\Omega_{0}^{n} \subset \mathbb{R}^{n}$ with boundary $\Sigma_{0}= \partial \Omega_{0}$ states

\begin{equation*}
    \frac{1}{(n-1)w_{n-1}} \int_{\Sigma_{0}} H d\sigma \geq \left(\frac{|\Sigma_{0}|}{w_{n-1}} \right)^{\frac{n-2}{n-1}}.
\end{equation*}
 In their breakthrough paper \cite{BHW12}, Brendle, Hung, and Wang proved a weighted Minkowski inequality for mean-convex and star-shaped hypersurfaces $\Sigma_{0}^{n-1} \subset \mathbb{H}^{n}$. Later in \cite{LG12}, De Lima and Girao proved a related Minkowski inequality for such hypersurfaces with lower bound depending on the area of $\Sigma_{0}$. Their respective inequalities read

\begin{eqnarray}
   \frac{1}{(n-1)w_{n-1}} \int_{\Sigma_{0}} f H d\sigma &\geq& \left( \frac{|\Sigma_{0}|}{w_{n-1}} \right)^{\frac{n-2}{n-1}} + \frac{n}{w_{n-1}} \int_{\Omega_{0}} f d\Omega, \label{baby_minkowski_1} \\
   \frac{1}{(n-1)w_{n-1}} \int_{\Sigma_{0}} f H d\sigma &\geq& \left( \frac{|\Sigma_{0}|}{w_{n-1}} \right)^{\frac{n-2}{n-1}} + \left( \frac{|\Sigma_{0}|}{w_{n-1}} \right)^{\frac{n}{n-1}}, \label{baby_minkowski_2}
\end{eqnarray}
where $f \in C^{\infty}(\mathbb{H}^{n})$ is the potential function

\begin{equation} \label{potential}
    f(r)= \cosh(r).
\end{equation}
The proofs of \eqref{baby_minkowski_1}-\eqref{baby_minkowski_2} utilize quantities which are monotone under smooth IMCF. The failure of blow-down arguments in $\mathbb{H}^{n}$ presents a key difficulty to proving these inequalities for larger classes of hypersurfaces using weak IMCF. However, Theorem \ref{star_shape} immediately overcomes this obstacle in low dimensions by ensuring that $\Sigma_{t}$ are eventually smooth, star-shaped, and strictly mean-convex.

Recall that a domain $\Omega_{0} \subset \mathbb{H}^{n}$ is \textit{outer-minimizing} if $|\partial^{*} F| \geq |\partial^{*} \Omega_{0}|$ for every bounded domain $F$ of finite perimeter $|\partial^{*} F|$ properly containing $\Omega_{0}$, and \textit{strictly outer-minimizing} if equality implies $F=\Omega$ up to a.e. equivalence. The \textit{strictly minimizing hull} $\Omega_{0}^{+}$ of $\Omega_{0}$ is the intersection of all bounded strictly outer-minimizing domains containing $\Omega_{0}$ and is itself strictly outer-minimizing. Exploiting regularity of weak IMCF in low dimensions, we establish versions of \eqref{baby_minkowski_1}-\eqref{baby_minkowski_2} for arbitrary smooth domains of $\mathbb{H}^{n}$ with lower bounds determined $\Omega^{+}_{0}$ (compare, e.g. with inequality (1.2) in \cite{BFM24}).

\begin{Theorem}[Minkowski Inequalities in Hyperbolic Space] \label{minkowski_ineq}
Let $\Omega_{0} \subset \mathbb{H}^{n}$, $3 \leq n \leq 7$, be a bounded domain with connected $C^{\infty}$ boundary $\partial \Omega_{0}= \Sigma_{0}$. Then we have the inequalities
\begin{eqnarray}
   \frac{1}{(n-1)w_{n-1}} \int_{\Sigma_{0}} |f H| d\sigma &\geq& \left( \frac{|\Sigma_{0}^{+}|}{w_{n-1}} \right)^{\frac{n-2}{n-1}} + \frac{n}{w_{n-1}} \int_{\Omega_{0}^{+}} f d\Omega, \label{minkowski} \\
   \frac{1}{(n-1)w_{n-1}} \int_{\Sigma_{0}} |f H| d\sigma &\geq& \left( \frac{|\Sigma_{0}^{+}|}{w_{n-1}} \right)^{\frac{n-2}{n-1}} + \left( \frac{|\Sigma_{0}^{+}|}{w_{n-1}} \right)^{\frac{n}{n-1}}, \label{minkowski_area}
\end{eqnarray}
on $\Sigma_{0}$, where $\Sigma_{0}^{+}=\partial \Omega_{0}^{+}$ is the boundary of the strictly minimizing hull of $\Omega_{0}$. In both cases, equality holds if and only if $\Sigma_{0} = \mathbb{S}^{n-1} \times \{ r_{0} \}$ is a slice of \eqref{hyperbolic_space}. Consequently, inequalities \eqref{baby_minkowski_1}-\eqref{baby_minkowski_2} hold provided that $\Omega_{0}$ is outer-minimizing.
\end{Theorem} 
The pure-area Minkowski inequality \eqref{minkowski_area} carries further implications for the conjectured Penrose inequality for asymptotically hyperbolic manifolds. Consider the following model for $(n+1)$-dimensional hyperbolic space:
\begin{equation}
    (\mathbb{H}^{n+1}, \overline{g}) = ( \mathbb{H}^{n} \times \mathbb{R}, f^{2}(r)d\tau^{2} + g_{\mathbb{H}^{n}}).
\end{equation}
In \cite{LG12} and following earlier work by Dahl, Gicquaud, and Sakovich in \cite{DGS12}, de Lima and Girao considered non-compact hypersurfaces $M^{n} \subset \mathbb{H}^{n+1}$ satisfying the following:

\begin{enumerate}[label=(\Alph*)]
    \item $M^{n} \subset  (\mathbb{H}^{n+1}, \overline{g})$ is asymptotically hyperbolic and balanced in the sense of \cite{LG12}.
    \item $M^{n}= \text{graph}(h)$ for a function $h \in C^{\infty}(\mathbb{H}^{n} \setminus \overline{\Omega_{0}})$ defined over the exterior of a smooth domain $\Omega_{0} \subset \mathbb{H}^{n}$ containing $0$.
    \item $\tau \equiv \tau_{0}$ on $\partial M^{n} \subset \mathbb{H}^{n+1}$, and $M^{n}$ intersects $\mathbb{H}^{n} \times \{ \tau_{0} \}$ orthogonally along $\partial M$.
\end{enumerate}
Note that the last item implies that $\partial M$ considered as a hypersurface in $(M^{n},\overline{g}|_{M^{n}})$ is totally geodesic. Balanced asymptotically hyperbolic manifolds $(M^{n},g)$ carry a natural mass-like invariant $m$, which is studied in \cite{DGS12}. When a balanced asympotically hyperbolic $(M^{n},g)$ satisfies (B) and (C), it is also shown in \cite{DGS12} that this hyperbolic mass $m$ is bounded below by the integral quantity

\begin{equation*}
    \frac{1}{2(n-1)w_{n-1}}\int_{\Sigma_{0}} f H d\sigma,
\end{equation*}
where $\Sigma_{0} = \partial \Omega_{0}$ for the domain $\Omega_{0}$ in item (B) is considered as a hypersurface in $\mathbb{H}^{n}$. This is the left-hand side of \eqref{minkowski_area}, and so de Lima and Girao concluded from this inequality that

\begin{equation} \label{penrose}
    m \geq \frac{1}{2} \left( \left( \frac{|\Sigma_{0}|}{w_{n-1}} \right)^{\frac{n-2}{n-1}} + \left( \frac{|\Sigma_{0}|}{w_{n-1}} \right)^{\frac{n}{n-1}} \right)
\end{equation}
whenever $\Sigma_{0}^{n-1} \subset \mathbb{H}^{n}$ is star-shaped with $H>0$. Inequality \eqref{penrose} is a \textit{Penrose-type inequality} in general relativity, and it is conjectured to hold for all asymptotically hyperbolic manifolds $(M^{n},g)$ with horizon boundary $\partial M$. The idea of considering the Penrose inequality for graphical submanifolds and applying the Minkowski inequality goes back to the work of Lam \cite{L10}.

By extending \eqref{minkowski_area}, we can remove the additional assumptions from \cite{LG12} on the underlying $\Omega_{0}$. Thus, we obtain the Penrose inequality for balanced asymptotically hyperbolic graphs under the geometrically natural assumption that $\Omega_{0}$ is outer-minimizing.

\begin{Corollary}[Penrose Inequality for Balanced Asymptotically Hyperbolic Graphs]
Let $M^{n} \subset (\mathbb{H}^{n+1},\overline{g})$, $3 \leq n \leq 7$, satisfy (A)-(C). Suppose that the domain $\Omega_{0}$ in item (B) is outer-minimizing. Then the Penrose inequality \eqref{penrose} holds for $(M^{n},\overline{g}|_{M^{n}})$. Furthermore, equality holds if and only if $\Sigma_{0}= \mathbb{S}^{n-1} \times \{ r_{0} \}$ and $(M^{n},\overline{g}|_{M^{n}})$ is isometric to anti de Sitter Schwarzschild space of mass $m$.
\end{Corollary}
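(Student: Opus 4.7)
The plan is a direct chaining of two ingredients: the reduction of the hyperbolic mass $m$ to a boundary integral due to Dahl-Gicquaud-Sakovich \cite{DGS12}, and the extension of the pure-area Minkowski inequality \eqref{minkowski_area} to outer-minimizing domains provided by Theorem \ref{minkowski_ineq}. Under the original hypotheses of \cite{LG12}, the bottleneck in this chain was precisely the requirement that $\Sigma_0 = \partial \Omega_0$ be star-shaped with $H > 0$; once Theorem \ref{minkowski_ineq} removes this restriction in dimensions $3 \leq n \leq 7$, the statement of the Corollary follows without any further flow-theoretic input.

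Concretely, I would proceed as follows. Condition (C) implies that $\partial M$ is isometric, as a Riemannian manifold, to $\Sigma_0 \subset \mathbb{H}^n$, so in particular $|\partial M| = |\Sigma_0|$. Since assumptions (A)-(C) hold, the DGS estimate yields
\[
m \geq \frac{1}{2(n-1)w_{n-1}} \int_{\Sigma_0} f H \, d\sigma.
\]
As $\Omega_0$ is outer-minimizing and $3 \leq n \leq 7$, Theorem \ref{minkowski_ineq} gives
\[
\frac{1}{(n-1)w_{n-1}} \int_{\Sigma_0} f H \, d\sigma \geq \left(\frac{|\Sigma_0|}{w_{n-1}}\right)^{\frac{n-2}{n-1}} + \left(\frac{|\Sigma_0|}{w_{n-1}}\right)^{\frac{n}{n-1}}.
\]
Multiplying by $\frac{1}{2}$ and substituting $|\Sigma_0| = |\partial M|$ yields \eqref{penrose}.

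For the rigidity statement, if equality holds in \eqref{penrose} then equality must hold in both inequalities above. The equality case in Theorem \ref{minkowski_ineq} forces $\Sigma_0 = \mathbb{S}^{n-1} \times \{ r_0 \}$. To conclude that $(M^n, \overline{g}|_{M^n})$ is isometric to anti de Sitter Schwarzschild of mass $m$, I would then appeal to the rigidity case of the DGS estimate as treated in \cite{DGS12}; alternatively, once $\Sigma_0$ is a coordinate sphere, the boundary conditions in (C) (constant height $\tau_0$ and orthogonal intersection with $\mathbb{H}^n \times \{\tau_0\}$) together with saturation of the DGS inequality reduce the equation for the graph height $h$ to an ODE in $r$, which I would integrate to obtain the Schwarzschild profile. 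The only genuine obstacle is this rigidity check: chasing through the DGS computation to confirm that saturation forces the Schwarzschild ansatz, rather than merely a sharp inequality between mass-like quantities. The quantitative portion of the Corollary, by contrast, is a one-line chain of the two sharp inequalities above.
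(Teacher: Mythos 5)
Your proposal matches the paper's (implicit) argument exactly: the paper itself offers no separate proof of this corollary, remarking only that the DGS estimate $m \geq \frac{1}{2(n-1)w_{n-1}} \int_{\Sigma_0} fH\,d\sigma$ holds under (A)--(C) and that the extension of \eqref{minkowski_area} to outer-minimizing domains (Theorem \ref{minkowski_ineq}) removes the star-shaped $H>0$ restriction from \cite{LG12}. Your chaining of the two sharp inequalities, together with the observation that $|\partial M| = |\Sigma_0|$ via condition (C), and your note that the rigidity case requires equality in both the Minkowski inequality and the DGS estimate (the latter deferred to \cite{DGS12}), is precisely the intended argument.
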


\begin{figure}
\begin{center}
\textbf{The Penrose Inequality for Balanced Asymptotically Hyperbolic Graphs}
\vspace{0.5cm}
\end{center}
    \centering
    \begin{tikzpicture}[scale=2.5]
    \draw [black, thick, fill=blue, opacity=0.2] plot coordinates {(-3,-1) (-2,0) (3,0) (2,-1) };
    \draw [blue, thick, fill=blue, opacity=0.3] plot [smooth cycle] coordinates {(-0.5,-0.7) (-0.2,-0.8) (0.2,-0.9) (0.5, -0.7) (0.3, -0.5) (0.4, -0.3) (0.3,-0.15) (0.2,-0.1) (0,-0.1) (-0.4,-0.3) (-0.6,-0.15) (-0.8,-0.3) (-0.7,-0.6) (-0.6,-0.65)};
    \draw [thick, ->] (0,-0.5) -- (0,2);
    \draw (0,2) node[anchor=west]{\large{$\tau$}};
    \draw (1.7,0) node[anchor=south]{\color{blue}{$\mathbb{H}^{n} \times \{ \tau = \tau_{0} \}$}};
    \draw (-0.7,1.1) node{\large{$(\mathbb{H}^{n+1},\overline{g})$}};
    \draw (0,-0.5) node[anchor=east]{\color{blue}{\large{$\Omega_{0}$}}};
    \draw [black, thick] plot [smooth] coordinates {(0.5,-0.7) (0.8,0) (1.1,0.3) (1.4,0.5) (1.7,0.6) (2,0.7)};
      \draw [black, thick] plot [smooth] coordinates {(-0.8,-0.3) (-0.9, 0.3) (-1.2,0.6) (-1.5,0.8) (-1.8,0.9)};
      \draw [black, thick, dotted] plot [smooth cycle, very thick] coordinates{(-0.9,0.3) (-0.4,0) (0.1,-0.1) (0.6,0) (0.8,0.1) (1.1,0.3) (0.8,0.4) (0.6,0.6) (0,0.65) (-0.5,0.45)};
      \draw (1.5,0.8) node{$M^{n}=\text{graph}(h)$};
    \end{tikzpicture}
    \caption{The total mass of a balanced asymptotically hyperbolic graph in $\mathbb{H}^{n+1}$ which intersects a $\tau=\text{const}$ slice orthgonally along its boundary is bounded below by the right-hand side of \eqref{minkowski_area}. Given Theorem \ref{minkowski}, the Penrose inequality holds for such a graph whenever $\Omega_{0}$ is outer-minimizing for $3 \leq n \leq 7$.}
\end{figure}
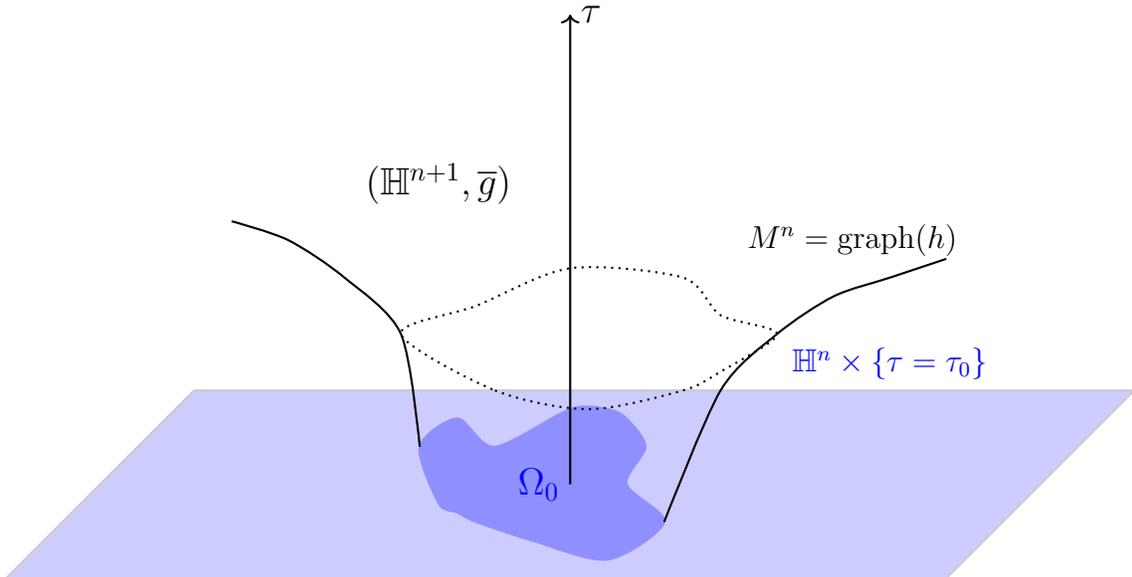
In addition to Minkowski inequalities, the proof method of Theorem \ref{star_shape} also gives a classification of proper weak IMCF on $\mathbb{H}^{n} \setminus \{ 0 \}$ in all dimensions. The blow-down limit of weak IMCF on an asymptotically flat manifold is given by a proper weak IMCF on $\mathbb{R}^{n} \setminus \{ 0 \}$, and a key component of the blow-down lemma in the asymptotically flat setting is that expanding spheres are the only solution of this form (Proposition 7.2 in \cite{HI99}). The gradient estimate \eqref{support_bound} produces the analogue in hyperbolic space.

\begin{Theorem}[Rigidity of IMCF Cores in Hyperbolic Space] \label{rigidity}
The only proper weak solution of IMCF on $\mathbb{H}^{n} \setminus \{ 0 \}$ is the expanding sphere solution
\begin{equation*}
    u(r)= (n-1) \log \left( \frac{\sinh(r)}{\sinh(r_{0})} \right), \hspace{1cm} r_{0} \in \mathbb{R}^{+}.
\end{equation*}  
\end{Theorem}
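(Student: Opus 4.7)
The plan is to reduce the rigidity statement to Theorem \ref{star_shape} by restarting the weak flow at an arbitrarily negative time $t_0$ and exploiting that the ``initial outer radius'' $r_+(t_0) := \max_{\Sigma_{t_0}} r$ tends to $0$ as $t_0 \to -\infty$; plugging this into the gradient estimate \eqref{support_bound} then forces every level set to be a geodesic sphere centered at the puncture.

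First I would show $r_+(t) \to 0$ as $t \to -\infty$. Properness of $u$ on $\mathbb{H}^n \setminus \{0\}$, combined with the expanding nature of IMCF, ensures $u \to -\infty$ at the puncture and $u \to +\infty$ at hyperbolic infinity, so each sublevel set $\Omega_{t_1} = \{u < t_1\}$ is bounded in $\mathbb{H}^n$ and every $\Sigma_t$ with $t \leq t_1$ lies inside the fixed bounded region $\overline{\Omega_{t_1}}$. If $r_+(t) \not\to 0$, there exist $\delta > 0$, $t_k \to -\infty$, and points $x_k \in \Sigma_{t_k}$ with $\dist(x_k, 0) \geq \delta$; extracting a convergent subsequence $x_k \to x_\infty \in \mathbb{H}^n \setminus \{0\}$ and using $u(x_k) = t_k$, continuity of $u$ at $x_\infty$ forces $u(x_\infty) = -\infty$, contradicting $u \in C^{0,1}_{\loc}$.

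Next, fix $t^* \in \mathbb{R}$ and pick $t_0 < t^*$ with $r_+(t_0) < \min_{\Sigma_{t^*}} r$. The function $u - t_0$ restricted to $\mathbb{H}^n \setminus \Omega_{t_0}$ is a proper weak IMCF with initial datum $\Omega_{t_0}$, so (after a routine approximation of the merely $C^{1,\alpha}$ boundary $\Sigma_{t_0}$ by smooth outer-minimizing domains and appeal to stability of the weak flow) Theorem \ref{star_shape} applies at time $t^*$ and yields
\begin{equation*}
    |Dr_{t^*}(\theta)| \leq \frac{\sinh(r_+(t_0))\,\sinh(r_{t^*}(\theta))}{\sqrt{\sinh^2(r_{t^*}(\theta)) - \sinh^2(r_+(t_0))}}.
\end{equation*}
Letting $t_0 \to -\infty$ drives the right-hand side to zero uniformly on $\mathbb{S}^{n-1}$, so the Lipschitz function $r_{t^*}$ is constant, and $\Sigma_{t^*}$ is a geodesic sphere centered at the puncture. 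Since this holds for every $t^*$, $u$ is radial, and the equation $\DIV(\nabla u/|\nabla u|) = |\nabla u|$ reduces to the ODE $u'(r) = (n-1)\coth(r)$, whose solutions are precisely the family $u(r) = (n-1)\log(\sinh(r)/\sinh(r_0))$.

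The main obstacle is the collapse step. The contradiction argument above hinges on the $\Sigma_t$ being confined to a fixed bounded region of $\mathbb{H}^n$ as $t \to -\infty$, which requires $u \to +\infty$ at hyperbolic infinity. This uses more than just properness (which only controls $u^{-1}([a,b])$): one must additionally invoke that weak IMCF is an expanding flow to fix the direction of $u$ at the two ends of $\mathbb{H}^n \setminus \{0\}$, perhaps using the exponential area law $|\Sigma_t| = |\Sigma_{t_0}|e^{t-t_0}$ together with monotonicity of $\Omega_t$. A secondary concern is applying Theorem \ref{star_shape} with only $C^{1,\alpha}$ initial data, but this should be unproblematic given the Alexandrov-reflection structure of its proof.
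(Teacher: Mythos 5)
Your proposal matches the paper's proof: both show $r_+(s) \to 0$ as $s \to -\infty$ via properness and compactness of the level sets in $\mathbb{H}^n \setminus \{0\}$, restart the weak flow at a sufficiently negative time $t_0$, apply Theorem \ref{star_shape} and the gradient estimate \eqref{support_bound} with $r_+(t_0)$ in the role of $r_+$, and let $t_0 \to -\infty$ to force $Dr_{t^*} \equiv 0$, hence each $\Sigma_{t^*}$ is a geodesic sphere. The only minor difference is the endgame—you integrate the radial ODE $u'(r)=(n-1)\coth(r)$, while the paper invokes the area growth law $|\Sigma_t|=e^t|\Sigma_0|$—and your concern about restarting at the merely $C^{1,\alpha}$ boundary $\Sigma_{t_0}$ is unnecessary, since the proof of Theorem \ref{star_shape} is a pure comparison argument with the weak solution $u$ itself and never uses smoothness of the initial boundary.
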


We remark that the blow-down procedure in \cite{HI99}, which depends in particular on their estimate (3.1), does not carry over to the asymptotically hyperbolic setting-- if it did, Corollary \ref{rigidity} would imply that $\Sigma_{t}$ converge to a round sphere after scaling, and is false according to \cite{N10}, \cite{HW14}. Nevertheless, Corollary \ref{rigidity} has some interesting applications. For instance, it implies a rigidity result for smooth ancient flows in $\mathbb{H}^{n}$-- namely, the expanding sphere solution is the only embedded ancient solution to \eqref{IMCF} converging to a point as $t \rightarrow -\infty$.


We now give an outline of the paper. In Section 2, we review the variational formulation of \eqref{dirichlet}. The aspects of weak solution theory which are especially important to our approach are the comparison principle for weak solutions and the approximation theorems for the level sets.

In Section 3, we prove Theorem \ref{star_shape}. The approach is similar to an Alexandrov reflection method for viscosity solutions of extrinsic curvature flows in $\mathbb{R}^{n}$  that was developed by Chow-Gulliver \cite{CG01}. In \cite{H19}, I applied a version of their reflection method to proper weak IMCF in $\mathbb{R}^{n}$. Here, reflections about a plane in Euclidean space are replaced with inversions about a sphere in the Poincare ball. We derive a comparison principle for weak solutions under these inversions which may be used to show in arbitrary dimensions that $\Sigma_{t}$ are each radial Lipschitz graphs once they escape $B_{r_{+}}(0)$, with a Lipschitz constant depending only on $r_{+}$. Given that $\Sigma_{t}$ are $C^{1,\alpha}$ hypersurfaces when $3 \leq n \leq 7$, we may then Theorem 1.2 from \cite{LW17}, which states that the region outside of a $C^{1}$ star-shaped domain $\Omega_{0} \subset \mathbb{H}^{n}$ with bounded non-negative weak mean curvature is foliated by a star-shaped solution to \eqref{IMCF}. From this, Theorem \ref{star_shape} follows. Theorem \ref{rigidity} also arises from the gradient estimate \eqref{support_bound}.

In Section 5, we consider the Heintze-Karcher type inequality established by Brendle in \cite{B13}, which states that

\begin{equation*}
   (n-1) \int_{\Sigma_{0}} \frac{f}{H} d\sigma \geq n \int_{\Omega_{0}} f d\Omega 
\end{equation*}
for a domain $\Omega_{0} \subset \mathbb{H}^{n}$ with $C^{\infty}$ $H> 0$ boundary. This inequality is key to showing monotonicity of the Minkowski functional under smooth IMCF, and so we must also apply it to obtain monotonicity under weak IMCF. When $n < 8$, the $\Sigma_{t}$ are $C^{1}$ hypersurfaces with bounded non-negative weak mean curvature $H$. Our approach is to approximate $\Sigma_{t}$ by mean curvature flow (MCF), an approach that was utilized in \cite{HI08} and subsequently in \cite{LW17} and \cite{SZ21}. This inequality holds on the approximating surfaces $\Sigma_{\epsilon}$ because they are smooth with $H_{\epsilon} >0$, and so an analysis of $H^{-1}_{\epsilon}$ as $\epsilon \rightarrow 0$ allows us to pass the Heintze-Karcher inequality to $\Sigma_{t}$. An approximation procedure would likely be more complicated for $n \geq 8$, and so we ignore these dimensions here.

In Section 6, we extend inequality \eqref{minkowski} to smooth, outer-minimizing $\Omega_{0} \subset \mathbb{H}^{n}$. To accomplish this, we show monotonicity of the functional

\begin{equation*}
    Q(t) =  |\Sigma_{t}|^{\frac{2-n}{n-1}} \left( \int_{\Sigma_{t}} fH d\sigma - n(n-1) \int_{\Omega_{t}} f d\Omega  \right)
\end{equation*}
under proper weak IMCF in these dimensions. We estimate the growth of the bulk term and the surface integral term separately. First for the bulk term, we use Brendle's inequality for the weak $\Sigma_{t}$ and the co-area formula for Lipschitz functions in order to derive a lower bound on $\int_{\Omega_{t}} f d\Omega$. For the growth of the surface integral term, we employ the method of Freire-Schwartz \cite{FS13} and Wei in \cite{W18} of approximation by elliptically-regularized solutions $u^{\epsilon}$. In particular, we prove an inequality over the level sets of $u^{\epsilon}$ involving the potential $f$ which can then by used to estimate $\int_{\Sigma_{t}} f H d\sigma$. Altogether, we get $Q(0) \geq Q(t)$ for any $t$, and Theorem \ref{star_shape} then completely solves the convergence issue since $Q(t) \geq (n-1) w_{n-1}^{\frac{1}{n-1}}$ for $t > T$.

Finally in Section 7, we extend the pure area Minkowski inequality \eqref{minkowski_area}. Like in \cite{LG12}, it is sufficient to prove monotonicity only for times $t$ satisfying

\begin{equation*}
    \frac{n}{w_{n-1}} \int_{\Omega_{t}} f d\sigma < \left( \frac{|\Sigma_{t}|}{w_{n-1}} \right)^{\frac{n}{n-1}},
\end{equation*}
and this follows immediately from the estimates in Section 5. We will not discuss the RPI for asymptotically hyperbolic graphs in this section-- see \cite{DGS12} for the derivation of the inequality $m \geq \frac{1}{2(n-1)w_{n-1}} \int_{\Sigma_{0}} fH d\sigma$.

\subsection*{Acknowledgements} I would like to thank Yong Wei for explaining his work on IMCF and Minkowski inequalities as well as Martin Li and Raphael Tsiamis for suggestions about the $n>7$ case of Theorem \ref{star_shape}. I would also like to thank the National Center for Theoretical Sciences mathematics division and Copenhagen Center for Geometry and Topology for continued financial and career support.
\section{Preliminaries} 
To introduce weak solutions of IMCF, we consider the functional

\begin{equation} \label{functional}
    J_{u}(F) = |\partial^{*} F| - \int_{F} |\nabla u| d\Omega,
\end{equation}
defined over bounded domains $F \subset \mathbb{H}^{n}$ of finite perimeter $|\partial^{*}F|$, where $u \in C^{0,1}_{loc}(\mathbb{H}^{n})$ is a locally Lipschitz function. Given a proper function$u \in C^{\infty}(\mathbb{H}^{n})$ with $du \neq 0$, one may check that the domains $\Omega_{t} =\{ u < t\}$ minimize $J_{u}$ for each $t >0$ if and only if $u$ solves the Dirichlet problem \eqref{dirichlet} on $\mathbb{H}^{n} \setminus \Omega_{0}$. In turn, the level sets $\Sigma_{t} = \{ u = t\}$ are a solution to classical IMCF \eqref{IMCF}, and this leads to the following definition.

\begin{Definition} \label{weak_imcf}
Let $\Omega_{0} \subset \mathbb{H}^{n}$ be a bounded domain with connected $C^{\infty}$ boundary $\Sigma_{0}$. A \textit{weak solution of IMCF with initial condition $\Omega_{0}$} is a function $u \in C^{0,1}_{\text{loc}} (\mathbb{H}^{n})$ such that $\Omega_{0}=\{ u < 0 \}$ and for each $t >0$ the sub-level sets $\Omega_{t}= \{ u < t \}$ satisfy

\begin{equation} \label{variational}
 J_{u}(\Omega_{t}) \leq J_{u}(F) \hspace{1cm} \forall F \supset \Omega_{0}.
\end{equation}
\end{Definition}

This formulation is inspired by the level-set formulation for mean curvature flow due to Evans-Spruck \cite{ES91} and Chen-Goto-Giga \cite{CGG91}. Because $\Omega_{t}$ solve a minimization problem, the weak flow surfaces $\Sigma_{t}=\partial \Omega_{t}$ enjoy several of the same regularity properties as area-minimizers in Riemannian manifolds. Before stating these, we remind the reader that the \textit{weak mean curvature} of a $C^{1}$ hypersurface $\Sigma_{0}$ is the unique measurable function $H$ on $\Sigma_{0}$ satisfying

\begin{equation}
    \frac{d}{d\epsilon}\bigg\rvert_{\epsilon =0} |X_{\epsilon}(\Sigma_{0})| = \int_{\Sigma_{0}} H V^{\perp} d\sigma
\end{equation}
for the flow $X: \mathbb{H}^{n} \times (-\epsilon_{0},\epsilon_{0}) \rightarrow \mathbb{H}^{n}$ of a vector field $V \in C^{1}_{c}(\Gamma \mathbb{H}^{n})$. 
\begin{Theorem}[\cite{HI99}, Elementary Properties of Proper Weak IMCF] \label{imcf_properties}
Let $\Omega_{0} \subset \mathbb{H}^{n}$ be a bounded domain with connected $C^{\infty}$ boundary $\partial \Omega_{0}=\Sigma_{0}$, and let  $u \in C^{0,1}_{\text{loc}}(\mathbb{H}^{n})$ be a proper weak IMCF with initial condition $\Omega_{0}$. Then the following properties hold with references from \cite{HI99} provided:

\begin{enumerate}[label=\roman*]
    \item (Theorem 1.3) If $3 \leq n \leq 7$, then $\Sigma_{t}= \partial \{ u < t \}$ and $\Sigma^{+}_{t} = \partial \{ u > t \}$ are $C^{1,\alpha}$ hypersurfaces for every $t > 0$.
    \item ((1.10)) If $n <8$, we have the convergence

    \begin{equation}
        \Sigma_{s} \rightarrow \Sigma_{t} \hspace{1cm} \text{  as  }  \hspace{0.5cm} s \nearrow t, \hspace{1cm} \Sigma_{s} \rightarrow \Sigma^{+}_{t} \hspace{1cm} \text{   as   } s \searrow t,
    \end{equation}
    locally in $C^{1,\alpha}$ for every $t > 0$.
    \item ((1.12) and Lemma 5.1) The weak mean curvature $H$ of $\Sigma_{t}$ is essentially bounded on $\Sigma_{t}$. Furthermore, for a.e. $t \in (0,\infty)$ and $\mathcal{H}^{n-1}$ a.e. $x \in \Sigma_{t}$ we have $H(x) = |\nabla u(x)| > 0$.
    \item (Property 1.4 (iii)) For each $t \geq 0$, $\Omega_{t}^{+}$ is the strictly-minimizing hull of $\Omega_{t}$.
    \item (Lemma 5.6) If $\Omega_{0}$ is outer-minimizing, then the perimeter of $\Omega_{t}$ is given by $|\Sigma_{t}|=e^{t}|\Sigma_{0}|$.
\end{enumerate}
When $n \geq 8$, the first two items are true away from a singular set $Z$ with Hausdorff dimension at most $n-8$ and disjoint from $\Omega_{0}$.
\end{Theorem}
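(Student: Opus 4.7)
The plan is to reduce each of (i)–(iv) to the arguments of Huisken–Ilmanen in \cite{HI99}, which were developed on Riemannian manifolds and transfer to $\mathbb{H}^n$ with only cosmetic changes, since the variational functional $J_u$ and the notion of weak mean curvature are intrinsically Riemannian and the regularity theory used is local. The main task is to verify that the hyperbolic geometry provides the comparison barriers used in \cite{HI99}: this is painless because geodesic spheres in $\mathbb{H}^n$ have an explicit, monotone-in-radius mean curvature and the IMCF has an explicit spherical solution. These give upper and lower barriers for the gradient of $u$ on annular regions, which is the only place Euclidean structure is ever invoked in \cite{HI99}.

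For items (i) and (ii), I would argue that the minimization property \eqref{variational}, together with the local Lipschitz bound on $u$, makes $\Omega_t = \{u<t\}$ and its ``plus'' version a locally almost-minimizing set of finite perimeter in the sense of Tamanini: for competitor domains $F$ agreeing with $\Omega_t$ outside a small ball $B_r$ one gets $|\partial \Omega_t \cap B_r| \leq |\partial F \cap B_r| + \|\nabla u\|_{L^\infty(B_r)}|F \triangle \Omega_t|$, which is an almost-minimality estimate of order $r^{n-1+1}$. De Giorgi–Federer–Tamanini $\varepsilon$-regularity then gives $C^{1,\alpha}$ regularity off a singular set of Hausdorff dimension at most $n-8$, empty for $n<8$. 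The $C^{1,\alpha}$ convergence $\Sigma_s \to \Sigma_t$ (and $\Sigma_s\to\Sigma_t^+$) as $s\nearrow t$ (respectively $s\searrow t$) is then a compactness statement: uniform density estimates from almost-minimality combined with the monotonicity $\{u<s\} \nearrow \{u<t\}$ identify the unique subsequential $C^{1,\alpha}$ limit.

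For (iii), the essential boundedness of $H$ follows from the local gradient estimate on $u$ established in \cite{HI99} by comparison with explicit radial subsolutions; in $\mathbb{H}^n$ one simply replaces the Euclidean radial subsolutions with hyperbolic geodesic-sphere subsolutions, whose mean curvatures involve $\coth(r)$ in place of $\frac{n-1}{r}$, and the comparison principle for \eqref{variational} yields the same type of $L^\infty$ control. For a.e. $t$ one moreover has equality $H = |\nabla u|$ on $\Sigma_t$: this is exactly the first-variation Euler–Lagrange equation satisfied by minimizers of $J_u$ in the regular part, coupled with $|\nabla u|>0$ a.e.\ by the coarea formula and the fact that $|\{u=t\}|=0$ for a.e.\ $t$. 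Item (iv), the exponential area growth, follows by testing \eqref{variational} against $F = \Omega_t$ at time $0$ to get $|\Sigma_t| \leq e^t|\Sigma_0|$ using outer-minimizingness, while the coarea formula together with $H=|\nabla u|$ yields $\frac{d}{dt}|\Sigma_t| = |\Sigma_t|$ a.e., giving equality.

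The main obstacle in genuinely new work would be the almost-minimality argument for (i), but this is done in \cite{HI99} in a way that only uses Riemannian comparison and local elliptic/geometric measure theory; no argument relies on translation or scaling symmetries of $\mathbb{R}^n$. Consequently the entire theorem reduces to citing Theorem 1.3, (1.10), (1.12), Lemma 5.1, and Lemma 5.6 of \cite{HI99}, with the only nontrivial verification being that the local barriers used there admit hyperbolic replacements — which they do, via geodesic spheres and the explicit expanding-sphere IMCF.
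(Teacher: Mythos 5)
Your proposal is essentially the paper's approach: Theorem \ref{imcf_properties} is stated with explicit citations to Theorem 1.3, (1.10), (1.12), Lemma 5.1, and Lemma 5.6 of \cite{HI99}, and the paper gives no further proof, exactly as your final paragraph concludes. Your extended discussion of hyperbolic barriers is harmless but unnecessary, since \cite{HI99} develops the local regularity theory, the almost-minimality estimates, the bounded weak mean curvature, and the exponential area growth already in the setting of a general smooth Riemannian background, so there is nothing Euclidean to ``transfer''; your sketches of the underlying arguments (Tamanini almost-minimality for (i)--(ii), first variation and coarea for (iii)--(iv)) are accurate summaries of what \cite{HI99} does.
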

In view of the functional $J_{u}$ in \eqref{functional}, the domains $\Omega_{t} = \{ u < t \}$ and $\Omega^{+}_{t}= \{ u \leq t \}$ are respectively outer-minimizing and strictly outer-minimizing domains of $\mathbb{H}^{n}$ for each $t > 0$. If $\Omega_{t} \neq \Omega^{+}_{t}$ for some $t$, then the weak flow $\Sigma_{t}$ jumps across the region $\{ u = t \} \subset M^{n}$ and continues from $\Sigma^{+}_{t}$ according to property (ii). It is important to note, however, that these jumps can only occur at a measure zero subset of times.

In general, weak solutions of IMCF are not unique: for example, if $u$ satisfies \eqref{variational}, then so does $u^{t}= \min\{ u, t \}$ for any choice of $t$, a fact which we will also use later. However, a comparison principle from \cite{HI99} guarantees that there exists a unique weak solution for compact level sets, i.e. proper, for any choice of smooth initial condition $\Omega_{0}$. We will in fact need their entire comparison theorem in order to prove Theorem \ref{star_shape}, so we will re-state it here.
\begin{Theorem}[\cite{HI99}, Theorem 2.2] \label{HI_comp}
Let $u$ and $v$ be weak solutions of IMCF on $\mathbb{H}^{n}$ with initial conditions $\Omega_{0}$ and $\widetilde{\Omega}_{0}$, respectively.

\begin{enumerate}[label=\roman*]
    \item If $ \{ v > u \} \subset \subset W$ on an open set $W \subset \mathbb{H}^{n} \setminus \left(\Omega_{0} \cup \widetilde{\Omega_{0}}\right)$, then $ v \leq u $ on $W$. 
    \item If $\Omega_{0} \subset \widetilde{\Omega}_{0}$ and $u$ is proper, then $v \leq u$ on $\mathbb{H}^{n} \setminus \Omega_{0}$, or equivalently $\Omega_{t} \subset \widetilde{\Omega}_{t}$ for every $t >0$.
    \item Given any $C^{\infty}$ domain $\Omega_{0} \subset \mathbb{H}^{n}$, the proper weak solution $u \in C^{0,1}_{\text{loc}}(\mathbb{H}^{n})$ to IMCF with initial condition $\Omega_{0}$ is unique.
\end{enumerate}
\end{Theorem}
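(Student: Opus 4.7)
My plan is to exploit the variational characterization \eqref{variational}, the submodularity of perimeter $|\partial(A\cap B)|+|\partial(A\cup B)|\leq|\partial A|+|\partial B|$, and the coarea formula. Parts (ii) and (iii) reduce to (i), so the heart of the argument is (i).

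For part (i), I would fix $t>0$ and set $A_t=\{u<t\}$, $B_t=\{v<t\}$. Under the compact-containment hypothesis $\{v>u\}\subset\subset W$, where $W$ is disjoint from $\Omega_0\cup\widetilde\Omega_0$, the sets $A_t\cap B_t$ and $A_t\cup B_t$ are admissible competitors for the minimizations defining $A_t$ (against $J_u$) and $B_t$ (against $J_v$) respectively: all symmetric differences occur inside $W$, away from the initial boundaries, so after passing to a large compact piece the admissibility $F\supset\Omega_0$ (resp.\ $\supset\widetilde\Omega_0$) is preserved. Writing out the two minimization inequalities, adding them, and cancelling perimeter terms via submodularity yields
\begin{equation*}
\int_{A_t\setminus B_t}|\nabla v|\;\leq\;\int_{A_t\setminus B_t}|\nabla u|.
\end{equation*}
Note $A_t\setminus B_t=\{u<t\leq v\}\subset\{v>u\}$. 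Integrating over $t\in(0,\infty)$ and applying the coarea identity $\int_0^\infty\chi_{A_t\setminus B_t}(x)\,dt=(v(x)-u(x))^+$, one obtains the global bound
\begin{equation*}
\int_{\{v>u\}}(v-u)\bigl(|\nabla v|-|\nabla u|\bigr)\,dx\;\leq\;0.\tag{$\ast$}
\end{equation*}
To upgrade $(\ast)$ to the pointwise conclusion, I would test the weak form of \eqref{dirichlet} against the compactly supported Lipschitz function $\phi=(v-u)^+$, do the same with the roles of $u,v$ reversed, and subtract. Using $\nabla\phi=\chi_{\{v>u\}}(\nabla v-\nabla u)$ together with the Cauchy--Schwarz bound $(\nabla u/|\nabla u|)\cdot\nabla v\leq|\nabla v|$ produces a companion inequality that, combined with $(\ast)$, forces $\nabla u$ and $\nabla v$ to be positively parallel a.e.\ on $\{v>u\}$. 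Integrating along a common integral curve and using that $v=u$ on $\partial\{v>u\}$ (by compact containment in $W$) then forces $(v-u)^+\equiv 0$.

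For part (ii), properness of $u$ makes each $\{u<T\}$ compact. Since $\widetilde\Omega_0\supset\Omega_0$, one has $v\leq 0\leq u$ near $\partial\Omega_0$ and $v$ finite while $u\to\infty$; thus $\{v>u\}$ is contained in the compact shell $\{u<T\}\setminus\Omega_0$ for large $T$. Working with the truncation $u^T=\min(u,T)$ (still a weak solution) places the comparison exactly in the situation of (i) with $W$ a suitable open neighborhood of that shell, disjoint from $\Omega_0\cup\widetilde\Omega_0$; letting $T\to\infty$ gives $v\leq u$ on $\mathbb H^n\setminus\Omega_0$. Part (iii) then follows at once: if $u,v$ are both proper weak solutions with the common initial condition $\Omega_0=\widetilde\Omega_0$, applying (ii) symmetrically yields $u\leq v$ and $v\leq u$, hence $u=v$.

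The main obstacle is the final rigidity step in part (i): $(\ast)$ alone does not force $v\leq u$, because the integrand need not be pointwise nonnegative. One must carefully couple the variational inequality with the Cauchy--Schwarz equality analysis coming from testing the weak PDE, and then exploit the compact containment in $W$ so that the parallel-gradient ODE along integral curves has matching boundary data. A secondary technical hurdle is verifying that the union/intersection competitors remain admissible under the localization --- especially when $\Omega_0\neq\widetilde\Omega_0$ in parts (ii) and (iii) --- which requires approximation by $J_u^K$ minimizers over compact sets $K$ exhausting $\mathbb H^n$.
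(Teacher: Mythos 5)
Note first that the paper does not prove this theorem: it is quoted verbatim from Huisken--Ilmanen \cite{HI99}, Theorem~2.2, with the reader referred there. Your blind attempt can therefore only be assessed on its own internal correctness and against the source.

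Your competitor-set argument producing $(\ast)$ is correct and is genuinely in the spirit of the Huisken--Ilmanen proof: they obtain the very same inequality $\int (v-u)^+\bigl(|\nabla v|-|\nabla u|\bigr)\,d\mu\leq 0$ from the equivalent $J_u^K$ formulation of the minimization, using $\max(u,v)$ and $\min(u,v)$ as Lipschitz competitors and the pointwise identity $|\nabla\max(u,v)|+|\nabla\min(u,v)|=|\nabla u|+|\nabla v|$ in place of perimeter submodularity. The admissibility bookkeeping you raise (that $A_t\cap B_t\supset\Omega_0$ and $A_t\cup B_t\supset\widetilde\Omega_0$ because the symmetric differences lie in $W$, disjoint from the initial domains) is also fine.

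The genuine gap is exactly where you expect it, and your proposed repair does not work as stated. A weak solution in the sense of \eqref{variational} is a \emph{variational} (minimizing) solution, not a distributional solution of \eqref{dirichlet}, so ``testing the weak form'' of the PDE against $\phi=(v-u)^+$ is not available. What one actually gets by differentiating $s\mapsto J_u^K(u+s\phi)$ at $s=0^\pm$ is the pair of one-sided inequalities
\[
-\int_{\{\nabla u=0\}}|\nabla\phi|\ \leq\ \int_{\{\nabla u\neq 0\}}\frac{\nabla u\cdot\nabla\phi}{|\nabla u|}+\int\phi|\nabla u|\ \leq\ \int_{\{\nabla u=0\}}|\nabla\phi|,
\]
with the calibration field equal to $\nabla u/|\nabla u|$ only away from $\{\nabla u=0\}$. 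The error terms supported on $\{\nabla u=0\}$ do not vanish in general --- jumps of the weak flow occur precisely where $\nabla u=0$ on sets of positive measure --- and with $\phi=(v-u)^+$ they become $\int_{\{\nabla u=0\}\cap\{v>u\}}|\nabla v|$ and the analogous term with $u,v$ exchanged, which have no useful sign. This wrecks the Cauchy--Schwarz/parallel-gradient bookkeeping: you cannot conclude $\nabla u\cdot\nabla v=|\nabla u|\,|\nabla v|$ a.e.\ on $\{v>u\}$. The final ``integrate along common integral curves'' step is then moot, and in any case presupposes regularity of the gradients that weak solutions do not have. Huisken and Ilmanen avoid the PDE entirely and close the argument by a structural contradiction inside the minimization framework (an $\epsilon$-shift applied to the sets $\{v>u+\epsilon\}$ together with the strictly-minimizing-hull and exponential-area-growth properties of the sublevel sets).

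A secondary gap is in your reduction of (ii) to (i). With $u^T=\min(u,T)$ one has $\{v>u^T\}\cap\{u\geq T\}=\{v>T\}\cap\{u\geq T\}$, which is not a priori bounded when $v$ is not assumed proper; so the compact-containment hypothesis $\{v>u^T\}\subset\subset W$ of part (i) is not verified by the argument as you wrote it. This requires an additional approximation, which \cite{HI99} carry out, and glossing over it leaves a real hole.
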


 To conclude, we adress the approximating solutions of a proper weak IMCF, since we will require these in Section 5. In order to prove existence of proper weak solutions to \eqref{variational}, Huisken and Ilmanen in \cite{HI99} considered the elliptic Dirichlet problem

\begin{eqnarray}
    \text{div} \left( \frac{\nabla u^{\epsilon}}{\sqrt{|\nabla u^{\epsilon}|^{2} + \epsilon^{2}}} \right) &=& \sqrt{|\nabla u^{\epsilon}|^{2} + \epsilon^{2}} \hspace{1cm} \text{on} \hspace{1cm} F_{L}, \nonumber \\
    u^{\epsilon} &\equiv& 0 \hspace{3.25cm} \text{on} \hspace{1cm} \Sigma_{0}=\partial \Omega_{0}, \label{u_epsilon} \\
    u^{\epsilon}_{\partial F_{L}} &=& L \hspace{3.25cm} \text{on} \hspace{1cm} \partial F_{L} \setminus \Sigma_{0}. \nonumber
\end{eqnarray}
Here, $F_{L}= \{ v < L \}$ is defined using a subsolution at infinity of \eqref{variational}, but in $\mathbb{H}^{n}$ we can simply pick $v$ to be the expanding sphere solution. We will not summarize the existence argument here (See Section 2 of \cite{W18} for a summary), but these $u^{\epsilon}$ are useful to consider for geometric applications because they are smooth and proper. As in \cite{W18}, they will be a key ingredient in the monotonicity proof, and so we mention the important convergence properties of these.

Before stating these properties, we highlight that because $u^{\epsilon}$ are proper and smooth, the level set $\Sigma^{\epsilon}_{t}= \{ u^{\epsilon} = t \}$ is regular for a.e. $t \in (0,\infty)$ by the $C^{1}$ Sard Theorem (see, for example, Section 2.7 of \cite{S18}). Moreover, the Hessian of $u^{\epsilon}$ solving \eqref{u_epsilon} is non-vanishing at critical points, and so a.e. $x \in \Omega^{\epsilon}_{t}= \{ u^{\epsilon} < t \}$ belongs to a regular level set.
\begin{Theorem}[\cite{W18} Secs 2 and 4, Convergence of Approximate Solutions] \label{approximate}
Let $\Omega_{0} \subset \mathbb{H}^{n}$ be a bounded domain with connected $C^{\infty}$ boundary $\Sigma_{0}$, and let $u^{\epsilon}$ solve \eqref{u_epsilon} on $F_{L}$. Then there is a subsequence $\epsilon_{i} \rightarrow 0$ so that for a.e. $t \geq 0$, the mean curvature $H_{\Sigma^{i}_{t}}= H^{i}$ of $\Sigma^{i}_{t}= \partial \Omega^{i}_{t}$, $\Omega^{i}_{t}= \partial \{ u_{\epsilon_{i}} < t\}$, converges to the weak mean curvature $H_{\Sigma_{t}}=H$ of $\Sigma_{t}$ locally in $L^{2}$. Here $\Sigma_{t}$ correspond the proper weak IMCF with initial condition $\Omega_{0}$. Furthermore, for any $\phi \in C^{0}_{c}(\Omega_{t})$ we have the convergences
\begin{eqnarray}
    \int_{\Omega^{i}_{t}} \phi H_{i} d\Omega &\rightarrow& \int_{\Omega_{t}} \phi H d\Omega, \label{bulk_convergence} \\
    \int_{\Omega^{i}_{t}} \phi H_{i}^{2} d\Omega &\rightarrow& \int_{\Omega_{t}} \phi H^{2} d\Omega. \nonumber
 \end{eqnarray}
\end{Theorem}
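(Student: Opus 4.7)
The statement summarizes results from \cite{HI99} and \cite{W18}; I would follow the elliptic regularization strategy of those papers. The central observation is the Huisken--Ilmanen lift: the function $U^{\epsilon}(x,z) := u^{\epsilon}(x) - \epsilon z$ on $\mathbb{H}^n \times \mathbb{R}$ equipped with the product metric $g_{\mathbb{H}^n} + dz^2$ satisfies the \emph{unregularized} IMCF equation \eqref{dirichlet}, because writing $W_{\epsilon} = \sqrt{|\nabla u^{\epsilon}|^2 + \epsilon^2}$ one has $|\nabla U^{\epsilon}| = W_{\epsilon}$ and
\[
\text{div}_{g_{\mathbb{H}^n} + dz^2}\!\left(\frac{\nabla U^{\epsilon}}{|\nabla U^{\epsilon}|}\right) = \text{div}_{g_{\mathbb{H}^n}}\!\left(\frac{\nabla u^{\epsilon}}{W_{\epsilon}}\right) = W_{\epsilon} = |\nabla U^{\epsilon}|.
\]
This allows me to transfer the uniform regularity theory for weak IMCF solutions from \cite{HI99}, namely locally uniform $C^{1,\alpha}$ and $W^{2,p}$ bounds on $U^{\epsilon}$ away from $\Sigma_0 \times \mathbb{R}$, back to $u^{\epsilon}$ on compact subsets of $F_L \setminus \overline{\Omega_0}$.

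Next I would extract a subsequence $\epsilon_i \to 0$ with $u^{\epsilon_i} \to u$ locally uniformly and weakly in $W^{1,2}_{\text{loc}}$; stability of the minimization principle \eqref{variational} under such convergence forces the limit to be a weak IMCF with initial data $\Omega_0$, and Theorem \ref{HI_comp}(iii) identifies it with the prescribed proper weak solution. By Sard's theorem applied to $u^{\epsilon_i}$ and property (ii) of Theorem \ref{imcf_properties} for the limit, for a.e.\ $t > 0$ the smooth level sets $\Sigma^i_t$ converge to $\Sigma_t$ in $C^{1,\alpha}_{\text{loc}}$. A direct computation expanding $\text{div}(\nabla u^{\epsilon_i}/|\nabla u^{\epsilon_i}|) - \text{div}(\nabla u^{\epsilon_i}/W_{\epsilon_i})$ and invoking the PDE \eqref{u_epsilon} shows that on regular level sets of $u^{\epsilon_i}$ the mean curvature satisfies $H_i = W_{\epsilon_i}$ up to a correction of order $\epsilon_i^2$ that vanishes on $\{|\nabla u| > 0\}$. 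Combined with the identity $H = |\nabla u|$ from property (iii), this produces pointwise a.e.\ convergence $H_i \to H$ on $\Sigma_t$.

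The main technical obstacle is upgrading the pointwise statement to the $L^2_{\text{loc}}$ and bulk convergences \eqref{bulk_convergence}. For strong convergence of the gradients I would test the divergence-form PDE \eqref{u_epsilon} against $(u^{\epsilon_i} - u)\eta$ for a cutoff $\eta$ supported away from $\Sigma_0$ and extract from the resulting energy identity the strong $W^{1,2}_{\text{loc}}$ convergence $u^{\epsilon_i} \to u$ on $\Omega_t \setminus \overline{\Omega_0}$. The co-area formula then rewrites
\[
\int_{\Omega^i_t} \phi\, H_i \, d\Omega = \int_0^t \!\!\int_{\Sigma^i_s} \frac{\phi\, H_i}{|\nabla u^{\epsilon_i}|}\, d\sigma\, ds,
\]
and the $C^{1,\alpha}$ convergence of level sets, together with $H_i/|\nabla u^{\epsilon_i}| \to 1$ on the regular part and dominated convergence, yields the limit $\int_0^t \int_{\Sigma_s} \phi \, d\sigma \, ds = \int_{\Omega_t} \phi H \, d\Omega$, again using $H = |\nabla u|$. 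The $H_i^2$ version is strictly more delicate and requires a uniform $L^\infty_{\text{loc}}$ bound on $H_i$ away from $\Omega_0$; this is obtained in \cite{W18} by a maximum principle argument for $W_{\epsilon}$ applied to the lifted IMCF $U^{\epsilon}$, and it is the crucial technical input beyond the base theory of \cite{HI99}.
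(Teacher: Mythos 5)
The paper provides no proof of this theorem: it is stated as a cited result, with the attribution \cite{W18} Secs~2 and~4 (which in turn builds on the elliptic regularization machinery of \cite{HI99}, Secs.~3 and~5). Your reconstruction of what that citation is pointing to is broadly faithful to the Huisken--Ilmanen/Wei strategy: the downward-translating lift $U^{\epsilon}(x,z)=u^{\epsilon}(x)-\epsilon z$ that turns the regularized equation into the exact degenerate equation on the product manifold, uniform local estimates from the product-space regularity theory, compactness and identification of the limit with the unique proper weak solution, Sard's theorem plus $C^{1,\alpha}_{\mathrm{loc}}$ surface convergence for a.e.\ $t$, and the co-area formula for the bulk integrals.

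A few small remarks on precision. The locally uniform $C^{1,\alpha}$ and $W^{2,p}$ bounds in \cite{HI99} are bounds on the level \emph{surfaces} of $U^{\epsilon}$ (via the $C^{1,\alpha}$ regularity theory for minimizers of $J_u$), not on the function $U^{\epsilon}$ itself; what one has for the functions is a uniform local Lipschitz bound. Also, for the bulk convergences, what matters is not exactly $H_i/|\nabla u^{\epsilon_i}|\to 1$ (this ratio involves a second-order quantity over a first-order one and is not a priori close to~$1$) but rather that $W_{\epsilon_i}=\sqrt{|\nabla u^{\epsilon_i}|^2+\epsilon_i^2}$ is the mean curvature of the level set of the lifted function, and $W_{\epsilon_i}\to|\nabla u|=H$ in the appropriate $L^p_{\mathrm{loc}}$ sense, plus the uniform local $L^{\infty}$ bound on $H_i$ (from \cite{HI99}, Lemma~5.1 / (1.12)) that lets you pass to the $H_i^2$ integral; you flag this last ingredient correctly as the crucial step. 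None of this alters the soundness of your outline, and since the paper contributes no proof of its own here, the comparison is really with \cite{W18}/\cite{HI99}, which you have captured.
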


\section{An Alexandrov Reflection Principle in Hyperbolic Space}
To prove Theorem \ref{star_shape}, we will use the Poincare ball model for hyperbolic space. This consists of the unit ball $B_{1}(0) \subset \mathbb{R}^{n}$ equipped with the metric

\begin{equation*}
    g_{\mathbb{H}^{n}} = \frac{4}{(1-\rho^{2})^{2}} g_{\mathbb{R}^{n}},
\end{equation*}
where $\rho$ is the radial coordinate of $B_{1}(0)$. This model is related to \eqref{hyperbolic_space} by the change of variable
\begin{eqnarray}
    r(\rho) &=& \log \left( \frac{1+ \rho}{1-\rho} \right). \label{transformation1}
\end{eqnarray}
Throughout this section, we will denote the position vector of a point $x= (x_{1},\dots, x_{n}) \in \mathbb{R}^{n}$ by $\mathbf{x}$.

In \cite{H19} and following earlier work by Chow-Gulliver in \cite{CG01}, I studied weak solutions of IMCF in $\mathbb{R}^{n}$ using a parabolic analogue to the moving plane method of Alexandrov \cite{A58} for constant mean curvature hypersurfaces. In $\mathbb{R}^{n}$, composition of a weak solution of IMCF with reflection about a fixed hyperplane produces a new weak solution. The key in \cite{H19} is a comparison principle within the lower half-plane between the original solution and the reflected one. For a survey of the Euclidean version of this method, see \cite{C23}.

Here we consider sphere inversions in the Poincare ball, which are the analogues of reflections. For every constant $\lambda \in (0,1)$ and point $\theta \in \partial B_{1}(0)$, we consider the ball $B_{R_{\lambda}}(x_{\lambda,\theta}) \subset \mathbb{R}^{n}$ with radius 

\begin{equation}
    R_{\lambda} = \frac{1}{2} (\lambda^{-1} - \lambda)
\end{equation}
and centered at the point

\begin{equation}
      \mathbf{x}_{\lambda,\theta} = \frac{1}{2} (\lambda^{-1} + \lambda) \boldsymbol{\theta} \in \mathbb{R}^{n} \setminus B_{1}(0).
\end{equation}
$B_{R_{\lambda}}(x_{\lambda,\theta})$ lies a distance $\lambda$ from $0 \in B_{1}(0)$, is centered on the line containing $0$ and $\theta$, and intersects $\partial B_{1}(0)$ orthogonally. Given these facts, one may verify that inversion $F_{\lambda, \theta}: \mathbb{R}^{n} \setminus \{ x_{\lambda,\theta} \} \rightarrow \mathbb{R}^{n} \setminus \{ x_{\lambda,\theta} \}$ about $B_{R_{\lambda,\theta}}(x_{\lambda,\theta})$ defined by

\begin{equation} \label{inversion}
    \mathbf{F}_{\lambda, \theta} (\mathbf{x}) = \frac{R_{\lambda}^{2}}{||\mathbf{x} - \mathbf{x_{\lambda,\theta}}||^{2}} \left( \mathbf{x} - \mathbf{x}_{\lambda,\theta} \right) + \mathbf{x}_{\lambda,\theta}
\end{equation}
restricts to an isometry over $(B_{1}(0),g_{\mathbb{H}^{n}})$. For a domain $\Omega_{0} \subset B_{1}(0)$ we denote

\begin{equation}
  \Omega^{\lambda,\theta}_{0} = F_{\lambda,\theta}(\Omega_{0}),  
\end{equation}
and consider the ``lower half-space" region $H_{\lambda,\theta} \subset B_{1}(0)$ given by

\begin{equation} \label{half_space}
    H_{\lambda,\theta} = B_{1} (0) \cap B_{R_{\lambda}} (x_{\lambda,\theta}).
\end{equation}
Given a proper weak solution $u: (B_{1}(0),g_{\mathbb{H}^{n}}) \rightarrow \mathbb{R}$ of IMCF, we define another weak solution  $u_{\lambda,\theta}: (B_{1}(0),g_{\mathbb{H}}^{n+1}) \rightarrow \mathbb{R}$ by

\begin{equation}
    u_{\lambda,\theta}(x) = u \circ F_{\lambda,\theta}(x).
\end{equation}    
First, we derive a comparison principle between $u$ and $u_{\lambda,\theta}$. Geometrically, the following proposition gives that if $\Omega_{0}^{\lambda,\theta} \setminus \overline{H_{\lambda,\theta}} \subset \Omega_{0}$, then the inverted image $\Omega_{t}^{\lambda,\theta} \setminus \overline{H_{\lambda,\theta}}$ gets trapped in the evolving domain $\Omega_{t}$ for every $t >0$, see Figure \ref{comp1}.
\begin{Proposition}[Comparison Principle for Inverted Weak Solutions] \label{comp}
Let $u: (B_{1}(0),g_{\mathbb{H}^{n}}) \rightarrow \mathbb{R}$ be a variational solution to IMCF with initial condition $\Omega_{0}$. Fix $\theta \in \partial B_{1}(0)$ and $\lambda \in (0,1)$. If $ \Omega^{\lambda,\theta}_{0} \setminus \overline{H_{\lambda,\theta}} \subset \Omega_{0}$, then $ \Omega_{t}^{\lambda,\theta} \setminus \overline{H_{\lambda,\theta}} \subset \Omega_{t}$ for every $t >0$. Equivalently, $u_{\lambda,\theta}(x) \leq u(x)$ for every $x \in H_{\lambda,\theta} \setminus \overline{\Omega_{0}}$.
\end{Proposition}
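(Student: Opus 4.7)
The first step is to observe that $F_{\lambda,\theta}$ restricts to a hyperbolic isometry of $(B_{1}(0), g_{\IH^{n}})$ --- an involution fixing the inversion sphere $\partial B_{R_{\lambda}}(x_{\lambda,\theta}) \cap B_{1}(0)$ pointwise and swapping $H_{\lambda,\theta}$ with $B_{1}(0) \setminus \overline{H_{\lambda,\theta}}$. Two consequences follow immediately: the pullback $u_{\lambda,\theta} = u \circ F_{\lambda,\theta}$ is itself a proper weak solution of IMCF with initial condition $\Omega_{0}^{\lambda,\theta}$ (because the functional $J_{u}$ in \eqref{functional} is invariant under changes of variable by hyperbolic isometries, so the variational inequality \eqref{variational} transfers verbatim), and $u = u_{\lambda,\theta}$ on the inversion sphere. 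Applying the involution to the hypothesis $\Omega_{0}^{\lambda,\theta} \setminus \overline{H_{\lambda,\theta}} \subset \Omega_{0}$ yields the equivalent dual statement $\Omega_{0} \cap H_{\lambda,\theta} \subset \Omega_{0}^{\lambda,\theta}$: inside the ``lower half-space'' $H_{\lambda,\theta}$ the reflected initial domain contains the original.

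The plan is then to apply the comparison principle Theorem \ref{HI_comp}(i) with $v = u_{\lambda,\theta}$ on the open region
\[
    W = H_{\lambda,\theta} \setminus \overline{\Omega_{0}^{\lambda,\theta}},
\]
which by the dual statement lies in $\IH^{n} \setminus (\Omega_{0} \cup \Omega_{0}^{\lambda,\theta})$. The finite-part boundary of $W$ decomposes into the inversion sphere, where $u = u_{\lambda,\theta}$, and the portion $\partial \Omega_{0}^{\lambda,\theta} \cap H_{\lambda,\theta}$, where $u_{\lambda,\theta} = 0$ while $u \geq 0$ (the latter because the dual inclusion $\Omega_{0} \cap H_{\lambda,\theta} \subset \Omega_{0}^{\lambda,\theta}$ forces $\partial \Omega_{0}^{\lambda,\theta} \cap H_{\lambda,\theta}$ to be disjoint from $\Omega_{0}$). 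Once I have $u_{\lambda,\theta} \leq u$ on $W$, the proposition follows by combining with the trivial pointwise estimate $u_{\lambda,\theta} \leq 0 \leq u$ on $(\Omega_{0}^{\lambda,\theta} \cap H_{\lambda,\theta}) \setminus \overline{\Omega_{0}}$ and equality on $\partial \Omega_{0}^{\lambda,\theta} \cap H_{\lambda,\theta}$.

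The hardest step is verifying the compact containment $\{u_{\lambda,\theta} > u\} \subset\subset W$ demanded by Theorem \ref{HI_comp}(i), owing to two features specific to the hyperbolic setting: $W$ extends to the ideal boundary $\partial B_{1}(0) \cap \overline{H_{\lambda,\theta}}$ where both functions blow up, and the closure $\overline{\{u_{\lambda,\theta} > u\}}$ may approach the inversion sphere along the equality set $\{u = u_{\lambda,\theta}\}$. I would handle both by a truncation-and-exhaustion procedure. For the first, the truncations $u^{T} = \min(u, T)$ and $u_{\lambda,\theta}^{T} = \min(u_{\lambda,\theta}, T)$ remain weak solutions by the remark in Section 2, and $\{u_{\lambda,\theta}^{T} > u^{T}\} \subset \{u \leq T\}$ is contained in a compact set by properness of $u$. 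For the second, exhaust $W$ by open subsets $W_{\delta} \nearrow W$ with $\overline{W_{\delta}} \subset W$ obtained by pushing $\partial H_{\lambda,\theta}$ slightly into $H_{\lambda,\theta}$, apply the comparison on each $W_{\delta}$ where compact containment is automatic, and send $\delta \to 0$ together with $T \to \infty$. This reflection-and-truncation scheme directly parallels the Euclidean moving-plane method developed in \cite{H19}, with the hyperbolic sphere inversions $F_{\lambda,\theta}$ playing the role of planar reflections.
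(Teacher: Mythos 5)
Your overall strategy is the right one and closely parallels the paper's: observe that $F_{\lambda,\theta}$ is a hyperbolic isometry so $u_{\lambda,\theta}$ is again a weak solution with initial condition $\Omega_0^{\lambda,\theta}$, note $u=u_{\lambda,\theta}$ on the inversion sphere, pass the hypothesis to its dual $\Omega_0\cap H_{\lambda,\theta}\subset\Omega_0^{\lambda,\theta}$, and invoke Theorem \ref{HI_comp}(i) together with the truncation $\min(\cdot,T)$. These observations are all correct. The route you take from there, however, has a genuine gap that the paper's proof avoids.

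You propose to compare directly on $W=H_{\lambda,\theta}\setminus\overline{\Omega_0^{\lambda,\theta}}$, which is unbounded (it reaches the ideal boundary) and whose finite boundary includes the inversion sphere, where $u^T=u_{\lambda,\theta}^T$ with no strict gap. To produce the compact containment required by Theorem \ref{HI_comp}(i), you exhaust $W$ by $W_\delta$ obtained by pushing the inversion sphere inward, claiming ``compact containment is automatic'' on $W_\delta$. This is not so. Compact containment of $\{u_{\lambda,\theta}^T>u^T\}$ in $W_\delta$ requires the set to stay away from $\partial W_\delta$, but on the pushed-in portion of $\partial W_\delta$ (which lies strictly inside $W$) you have no information at all about the relative size of $u_{\lambda,\theta}^T$ and $u^T$---that ordering is precisely what you are trying to establish on $W$. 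Shrinking $W$ therefore makes the hypothesis of the comparison theorem harder, not easier, to verify. The exhaustion step does not close the argument.

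The paper sidesteps this entirely by a different, and decisive, choice: it compares on the \emph{bounded} region $W=\Omega_t^{\lambda,\theta}\setminus\overline{(\Omega_0\cup H_{\lambda,\theta})}$ lying \emph{outside} $H_{\lambda,\theta}$, using $u^t=\min(u,t)$ against the shifted solution $u_{\lambda,\theta}+\delta$. Then $W$ is compactly contained in $\mathbb{H}^n$ by construction, and the strict inequality $u^t<u_{\lambda,\theta}+\delta$ holds on all of $\partial W$: on $\partial\Omega_t^{\lambda,\theta}$ because $u^t\leq t<t+\delta$, on $\partial\Omega_0\setminus H_{\lambda,\theta}$ because $u^t=0<\delta\leq u_{\lambda,\theta}+\delta$ (using the hypothesis that this set lies outside $\Omega_0^{\lambda,\theta}$), and on the inversion sphere because $u=u_{\lambda,\theta}$. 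Compact containment of $\{u^t>u_{\lambda,\theta}+\delta\}$ is then immediate, $\delta\to0$ gives $u\leq u_{\lambda,\theta}$ outside $\overline{H_{\lambda,\theta}}$, and this is the statement. The two ingredients you are missing are the $\delta$-shift to break the degeneracy on the inversion sphere, and the choice of a bounded comparison region. Your dual region can be made to work if you also introduce a $\delta$-shift (so that the equality on the inversion sphere becomes strict, and a neighborhood of it is excluded), but the $W_\delta$-exhaustion as you describe it does not supply this.
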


\begin{proof}
For any $t > 0$, the function $u^{t}(x) = \min\{ t, u(x) \}$ is a variational solution to IMCF on $\mathbb{H}^{n} \setminus \Omega_{0}$. Define the set

\begin{equation*}
    W= \Omega^{\lambda,\theta}_{t} \setminus \overline{ \left(\Omega_{0} \cup H_{\lambda,\theta} \right) }. 
\end{equation*}
For $\delta > 0$, we compare $u^{t}$ and $u_{\lambda,\theta} + \delta$ on $W$. By definition, $u^{t} < u_{\lambda,\theta} + \delta= t + \delta$ on $\partial \Omega^{\lambda,\theta}_{t}$, and by assumption $u_{\lambda,\theta}+ \delta > u^{t} = 0$ on $\partial \Omega_{0} \setminus H_{\lambda,\theta}$. Finally, since $F_{\lambda,\theta}$ acts as the identity on $\partial B_{\lambda}(x_{\lambda,\theta}) \cap B_{1}(0)$, we have that $u_{\lambda,\theta}= u$ at these points and so altogether

\begin{equation*}
    u^{t} < u_{\lambda,\theta} + \delta \hspace{2cm} \text{  on  } \hspace{1cm} \partial W \subset  \partial B_{\lambda} (x_{\lambda},\theta) \cup \partial \Omega^{\lambda,\theta}_{t} \cup \left( \partial \Omega_{0} \setminus H_{\lambda,\theta} \right).
\end{equation*}
By Theorem \ref{HI_comp}(i), this implies that $u^{t} \leq u_{\lambda,\theta} + \delta$ on $W$, and since $\delta$ is arbitrary we get $u \leq u_{\lambda,\theta} < t$ in $W$. Hence $\Omega^{\lambda,\theta}_{t} \setminus  \overline{H_{\lambda,\theta}}  \subset \Omega_{t} \setminus \overline{H_{\lambda,\theta}}$. This is equivalent to $u_{\lambda,\theta}(x) \geq u(x)$ for $x \in B_{1}(0) \setminus \overline{\left( H_{\lambda,\theta} \cup \Omega^{\lambda,\theta}_{0} \right)}$, and also to $u(x) \leq u_{\lambda,\theta}(x)$ for $x \in H_{\lambda,\theta} \setminus \Omega_{0}$. 
\end{proof}
\begin{figure}
\begin{center}
\textbf{An Alexandrov Principle in the Poincare Ball}
\vspace{0.5cm}
\end{center}
    \centering
    \begin{tikzpicture}[scale=2.5, rotate=-30]

    
 \draw[line width=2 pt](0,0) circle (1.0) ;
\draw[ultra thick] (0.8,0.6)--(0.7,0.525);
 \draw[ultra thick] (0.7,0.525) -- (0.63,0.61833);
 
 \draw[ultra thick] (-0.8,0.6) -- (-0.7,0.525);
  \draw[ultra thick] (-0.7,0.525) -- (-0.63,0.61833);
 
  \draw[line width= 2 pt] (0,1.45) circle (1.05);
  \draw (0,1.7) node{$B_{R}(x_{\lambda,\theta})$};
  \draw (0,0.75) node{\color{blue}{$H_{\lambda,\theta}$}};
  \draw (0.3,-0.6) node{$\left( B_{1}(0),g_{\mathbb{H}^{n}} \right)$};
  \draw (0,0) node{\Large{.}};
  \draw (0.2,0) node{$\Omega_{0}$};
  \draw (0,0) -- (0,0.4);
 \draw (0,0.2) node[anchor=west]{$\lambda$};
 \draw (0,1) node{\Huge{.}};
 \draw (0,1) node[anchor=south] {$\theta$};
 \draw (0,1.45) node{\Huge{.}};
 \draw (0,1.45) node[anchor=north]{$\mathbf{x_{\lambda,\theta}}$};
 \draw [black, line width=2.5 pt, fill=gray, opacity=0.3] plot [smooth cycle] coordinates {(0,0.5) (-0.1,0.35) (-0.2,0.35) (-0.4,0.1) (-0.1,-0.35)  (0.1,-0.2) (0.2,-0.3) (0.4,-0.2) (0.4,0.1) (0.3,0.2) (0.1,0.35)};

  \begin{scope}
   \clip (0,1.45) circle[radius=1.05];
   \fill[blue!50, opacity=0.2] (0,0) circle[radius=1]; 
  \end{scope}
\begin{scope}
    \clip  plot coordinates{(-0.07,0.41) (-0.06,0.33) (-0.04,0.28) (-0.02,0.25) (0,0.2) (0.02,0.25) (0.04,0.28) (0.06,0.33) (0.07,0.41) }; 
   \fill[color=blue, fill=blue, line width=2 pt, opacity=0.2] plot coordinates{(0,0.5) (-0.1,0.35) (-0.2,0.4) (-0.4,0.1) (-0.1,-0.35)  (0.1,-0.2) (0.2,-0.3) (0.4,-0.2) (0.4,0.1) (0.3,0.2) (0.1,0.35)};
\end{scope}
\draw (1.8,1) node{\Huge{$\Rightarrow$}};

  \begin{scope}[shift={(3.5,1.95)}]
 \draw[line width=2 pt](0,0) circle (1.0) ;
\draw[ultra thick] (0.8,0.6)--(0.7,0.525);
 \draw[ultra thick] (0.7,0.525) -- (0.63,0.61833);
 
 \draw[ultra thick] (-0.8,0.6) -- (-0.7,0.525);
  \draw[ultra thick] (-0.7,0.525) -- (-0.63,0.61833);
  
  \draw (0,0.75) node{\color{blue}{$H_{\lambda,\theta}$}};
 \draw[thick](0,0) circle (1.0) ;
  \draw[line width=2 pt] (0,1.45) circle (1.05);
  \draw (0,1.7) node{$B_{R}(x_{\lambda,\theta})$};
  \draw (0.3,-0.6) node{$\left( B_{1}(0),g_{\mathbb{H}^{n}} \right)$};
  \draw (0,0) node{\Large{.}};
  \draw (0.2,0) node{$\Omega_{t}$};
  \draw (0,0) -- (0,0.4);
 \draw (0,1) node{\Huge{.}};
 \draw (0,1) node[anchor=south] {$\theta$};
 \draw (0,1.45) node{\Huge{.}};
 \draw (0,1.45) node[anchor=north]{$\mathbf{x_{\lambda,\theta}}$};
 \draw [black, line width=2.5 pt, fill=gray, opacity=0.3] plot [smooth cycle] coordinates {(0,0.6) (-0.1,0.55) (-0.3,0.5) (-0.6,0.2) (-0.3,-0.4)  (0,-0.5) (0.2,-0.35) (0.45,-0.25) (0.4,0.25) (0.3,0.35) (0.1,0.5)};

 \begin{scope}
    \clip plot coordinates{(-0.35,0.46005) (-0.3,0.3) (-0.15,0.2) (-0.1,0.13) (0,0.03) (0.02,0.06) (0.09,0.14) (0.16,0.24) (0.2,0.41922) (0.1,0.404772) (0,0.4) (-0.1,0.404772) (-0.2,0.41922) };
   \fill[blue, ultra thick, fill=blue, opacity=0.2] plot coordinates {(0,0.6) (-0.1,0.55) (-0.3,0.5) (-0.6,0.2) (-0.3,-0.4)  (0,-0.5) (0.2,-0.35) (0.45,-0.25) (0.4,0.25) (0.3,0.35) (0.1,0.5)};
   \end{scope}

  \begin{scope}
   \clip (0,1.45) circle[radius=1.05];
   \fill[blue!50, opacity=0.2] (0,0) circle[radius=1]; 
  \end{scope}
  \end{scope}
    \end{tikzpicture}
    \caption{If the part of $\Omega_{0}$ that lies inside the half-space $H_{\lambda,\theta}$ reflects into the part of $\Omega_{0}$ that lies outside, then this remains true for each $\Omega_{t}$.}
    \label{comp1}
\end{figure}
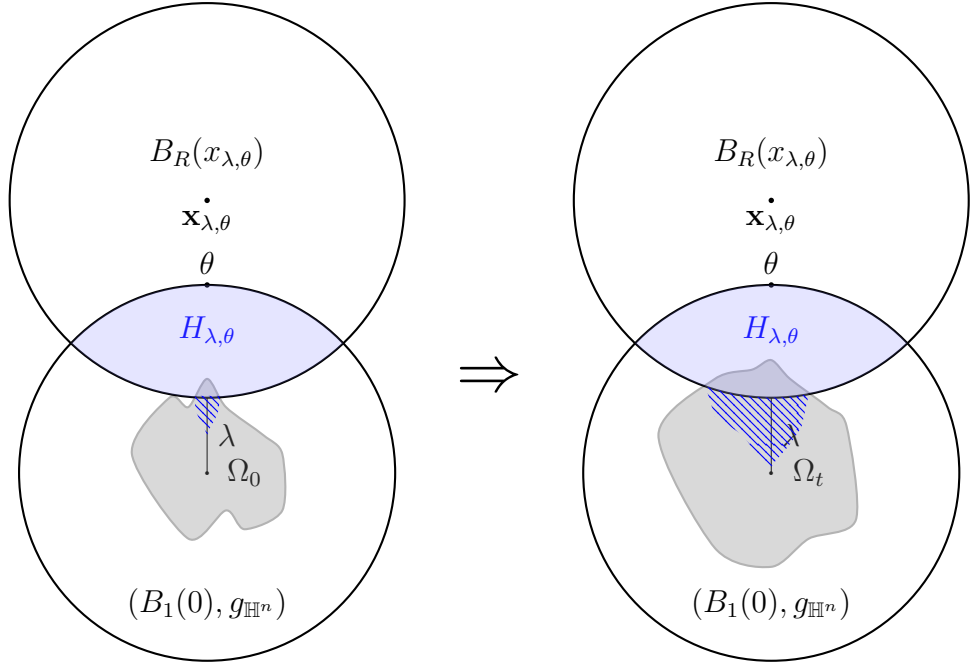

Consider points $x_{1},x_{2} \in B_{1}(0)$ with $\mathbf{x_{1}}= \rho_{1} \boldsymbol{\theta}, \mathbf{x_{2}}= \rho_{2} \boldsymbol{\theta}$ for the same point $\theta \in \partial B_{1}(0)$ and for $\sup_{\Omega_{0}} \rho \leq \rho_{1} < \rho_{2}$. $x_{1}$ and $x_{2}$ may be bisected by a sphere $\partial B_{R_{\lambda}} (x_{\lambda,\theta})$ so that $u(x_{2})= u_{\lambda,\theta}(x_{1})$ and $H_{\lambda,\theta} \cap \Omega_{0} = \varnothing$. By Proposition \ref{comp}, this implies that $u(x_{2}) \geq u(x_{1})$. In fact, we can compare \textit{any} pair of points which are bisected by a sphere that is disjoint from $\Omega_{0}$.

\begin{Theorem} \label{outside}
Let $u: (B_{1}(0),g_{\mathbb{H}^{n}}) \rightarrow \mathbb{R}$ be a variational solution to IMCF with initial condition $\Omega_{0}$, and let $B_{\rho_{+}}(0) \subset B_{1}(0)$ be the smallest ball about $0$ containing $\Omega_{0}$. Consider points $x_{1}, x_{2} \in B_{1}(0)$ with

\begin{eqnarray*}
\mathbf{x}_{1} &=& \rho_{1} \boldsymbol{\theta}_{1}, \\
\mathbf{x}_{2} &=& \rho_{2} \boldsymbol{\theta}_{2},
\end{eqnarray*}
for $\theta_{1}, \theta_{2} \in \partial B_{1}(0)$ and

\begin{equation*}
\rho_{+} < \rho_{1} < \rho_{2} < 1.    
\end{equation*}
Whenever
\begin{equation} \label{lipschitz_est}
   \rho_{2} - \rho_{1} \geq \left(\frac{\rho_{1}\rho_{2} (\rho_{1}^{-1} - \rho_{1})^{2}}{ (\rho_{+}^{-1} - \rho_{+})^{2} - (\rho_{1}^{-1} - \rho_{1})^{2}}\right)^{\frac{1}{2}} ||\boldsymbol{\theta}_{2} - \boldsymbol{\theta}_{1}||,
\end{equation}
we have that $u(x_{2}) \geq u(x_{1})$. 
\end{Theorem}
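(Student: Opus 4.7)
The strategy is to produce, for each pair $(x_1,x_2)$ satisfying \eqref{lipschitz_est}, a single inversion sphere $\partial B_{R_\lambda}(\mathbf{x}_{\lambda,\theta})$ of the form \eqref{inversion} whose associated Euclidean inversion $F_{\lambda,\theta}$ swaps $x_1$ and $x_2$, and then apply Proposition \ref{comp} at the point $x=x_2$ to conclude $u(x_1) = u_{\lambda,\theta}(x_2) \leq u(x_2)$.

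First I would construct the sphere. The requirement $F_{\lambda,\theta}(x_1)=x_2$ combined with the orthogonality $|\mathbf{x}_{\lambda,\theta}|^{2} - R_\lambda^{2} = 1$ built into \eqref{inversion} forces $\mathbf{x}_{\lambda,\theta}$ to lie on the line through $\mathbf{x}_1$ and $\mathbf{x}_2$ with
\begin{equation*}
|\mathbf{x}_{\lambda,\theta} - \mathbf{x}_1| \cdot |\mathbf{x}_{\lambda,\theta} - \mathbf{x}_2| = R_\lambda^{2} = |\mathbf{x}_{\lambda,\theta}|^{2} - 1.
\end{equation*}
Setting $e := |\mathbf{x}_2 - \mathbf{x}_1|$ and solving the corresponding one-variable equation along the line yields
\begin{equation*}
d_1 := |\mathbf{x}_{\lambda,\theta} - \mathbf{x}_1| = \frac{e(1-\rho_1^{2})}{\rho_2^{2}-\rho_1^{2}}, \qquad d_2 := |\mathbf{x}_{\lambda,\theta}-\mathbf{x}_2| = \frac{e(1-\rho_2^{2})}{\rho_2^{2}-\rho_1^{2}},
\end{equation*}
placing $\mathbf{x}_{\lambda,\theta}$ on the extension of $\overline{\mathbf{x}_1\mathbf{x}_2}$ past $\mathbf{x}_2$, with $R_\lambda = \sqrt{d_1 d_2}$ and $|\mathbf{x}_{\lambda,\theta}| = \sqrt{1+d_1 d_2} > 1$. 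The parameters $\theta = \mathbf{x}_{\lambda,\theta}/|\mathbf{x}_{\lambda,\theta}|$ and $\lambda = |\mathbf{x}_{\lambda,\theta}| - R_\lambda \in (0,1)$ are then read off.

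The main step will be to translate \eqref{lipschitz_est} into the geometric condition that this sphere sits outside $\overline{B_{\rho_+}(0)}$, i.e.\ $\lambda > \rho_+$, equivalently $R_\lambda < R_{\rho_+}:=(1-\rho_+^{2})/(2\rho_+)$. Squaring \eqref{lipschitz_est} and using the identity $e^{2} = (\rho_2-\rho_1)^{2} + \rho_1\rho_2 |\boldsymbol{\theta}_2-\boldsymbol{\theta}_1|^{2}$ to eliminate $|\boldsymbol{\theta}_2-\boldsymbol{\theta}_1|^{2}$, the hypothesis collapses to the cleaner bound
\begin{equation*}
e\,(\rho_1^{-1}-\rho_1) \leq (\rho_+^{-1}-\rho_+)(\rho_2-\rho_1).
\end{equation*}
Combined with the strict inequality $\rho_1 + \rho_2 > 2\rho_1$, this gives $d_1 < R_{\rho_+}$ strictly, and since $d_2 < d_1$ one gets $R_\lambda = \sqrt{d_1 d_2} < R_{\rho_+}$, hence $\lambda > \rho_+$. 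I expect this algebraic translation to be the main obstacle: the opaque Lipschitz form of \eqref{lipschitz_est} is precisely tuned to bound $d_1$ by $R_{\rho_+}$, which in turn controls the geometrically relevant quantity $R_\lambda$.

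Once $\lambda > \rho_+$ is secured, the rest is bookkeeping. Disjointness of the closed balls $\overline{B_{\rho_+}(0)}$ and $\overline{B_{R_\lambda}(\mathbf{x}_{\lambda,\theta})}$ forces $\overline{H_{\lambda,\theta}} \cap \Omega_0 = \varnothing$, and the inversion sends $B_1(0)\setminus \overline{H_{\lambda,\theta}}$ into $H_{\lambda,\theta}$, so $\Omega_0^{\lambda,\theta} \setminus \overline{H_{\lambda,\theta}} = \varnothing \subset \Omega_0$ trivially; the hypothesis of Proposition \ref{comp} is satisfied. Meanwhile $d_2 < R_\lambda$ places $x_2$ inside $B_{R_\lambda}(\mathbf{x}_{\lambda,\theta})$, hence in $H_{\lambda,\theta}$, and $\rho_2 > \rho_+$ keeps $x_2$ out of $\overline{\Omega_0}$. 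Applying Proposition \ref{comp} at $x = x_2$ yields $u(x_1) = u\circ F_{\lambda,\theta}(x_2) = u_{\lambda,\theta}(x_2) \leq u(x_2)$, which is the desired conclusion.
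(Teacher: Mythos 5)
Your proposal is correct and follows essentially the same route as the paper: construct the unique sphere through $x_1,x_2$ meeting $\partial B_1(0)$ orthogonally (your $\mathbf{x}_{\lambda,\theta}$ is exactly the paper's $\mathbf{x}_0 = (1-s_0)\mathbf{x}_1 + s_0\mathbf{x}_2$ with $s_0 = (1-\rho_1^2)/(\rho_2^2-\rho_1^2)$, and your $d_1,d_2$ are its distances $s_0 e$, $(s_0-1)e$ to $x_1,x_2$), show the inversion radius is small enough that the sphere misses $\overline{B_{\rho_+}(0)}$, then apply Proposition \ref{comp}. The only organizational difference is that you first rewrite \eqref{lipschitz_est} in the cleaner equivalent form $e(\rho_1^{-1}-\rho_1) \leq (\rho_+^{-1}-\rho_+)(\rho_2-\rho_1)$ and bound $d_1 < R_{\rho_+}$ directly, whereas the paper computes $R^2 = (s_0-1)s_0 e^2$ in full and estimates it term by term; both yield $R_\lambda < R_{\rho_+}$, hence $\lambda > \rho_+$, and the conclusion follows identically.
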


\begin{proof}
Consider the point $x_{0} \in \mathbb{R}^{n}$ given by

\begin{equation} \label{x'}
    \mathbf{x_{0}}= (1-s_{0})\mathbf{x}_{1} + s_{0} \mathbf{x}_{2}
\end{equation}
for

\begin{equation} \label{s_{0}}
    s_{0} = \frac{1-\rho_{1}^{2}}{\rho_{2}^{2} - \rho_{1}^{2}} = 1 + \frac{1-\rho_{2}^{2}}{\rho_{2}^{2}-\rho_{1}^{2}} > 1.
\end{equation}
Observe first that 

\begin{eqnarray}
    ||\mathbf{x_{0}}||^{2} &=& s_{0}^{2}\rho_{2}^{2} + (1-s_{0})^{2}\rho_{1}^{2} + 2 s_{0} (1-s_{0}) \rho_{1} \rho_{2} \langle \boldsymbol{\theta}_{1}, \boldsymbol{\theta}_{2} \rangle \nonumber \\
    &\geq& s_{0}^{2}\rho_{2}^{2} + (1-s_{0})^{2}\rho_{1}^{2} + 2 s_{0} (1-s_{0}) \rho_{1} \rho_{2}=\left(s_{0}\rho_{2} + (1-s_{0})\rho_{1} \right)^{2}  \label{x'_length} \\
    &=&  \left( \frac{1+ \rho_{1}\rho_{2}}{\rho_{1} + \rho_{2}}\right)^{2} > 1 \nonumber.
\end{eqnarray}
Here, we have used the fact that $(1-s_{0}) < 0$ and $|\langle \boldsymbol{\theta}_{1}, \boldsymbol{\theta}_{2} \rangle| \leq 1$ in the first inequality. Next, we consider the ball $B_{R} (x_{0}) \subset \mathbb{R}^{n}$ for

\begin{equation} \label{R}
    R= ||\mathbf{x_{0}} - \mathbf{x}_{1}||^{\frac{1}{2}} ||\mathbf{x_{0}} - \mathbf{x}_{2}||^{\frac{1}{2}}
\end{equation}
Notice for this choice of $R$ that $x_{2} \in B_{R}(x_{0})$, and so $B_{R}(x_{0}) \cap B_{1}(0) \neq \varnothing$. In terms of radial coordinates, $R$ is given by

\begin{eqnarray}
R^{2} &=& (s_{0}-1)s_{0} ||\mathbf{x}_{2} - \mathbf{x}_{1}||^{2} = (s_{0} -1)s_{0} \left( \rho_{2}^{2} + \rho_{1}^{2} - 2 \rho_{1}\rho_{2} \langle \boldsymbol{\theta}_{1},\boldsymbol{\theta}_{2} \rangle \right) \nonumber \\
&=& (s_{0}-1)s_{0} \left( \rho_{2}^{2} + \rho_{1}^{2} - 2\rho_{1}\rho_{2} + \rho_{1}\rho_{2} ||\boldsymbol{\theta}_{1} - \boldsymbol{\theta}_{2}||^{2} \right) \label{R_length} \\
&=& (s_{0}-1)s_{0} \left(\rho_{2}-\rho_{1} \right)^{2} \left(1+ \rho_{1}\rho_{2} \frac{||\boldsymbol{\theta}_{2} - \boldsymbol{\theta}_{1}||^{2}}{(\rho_{2} -\rho_{1})^{2}} \right) \nonumber \\
&=& \frac{(1-\rho_{1}^{2})(1-\rho_{2}^{2})}{(\rho_{1} + \rho_{2})^{2}} \left(1+ \rho_{1}\rho_{2} \frac{||\boldsymbol{\theta}_{2} - \boldsymbol{\theta}_{1}||^{2}}{(\rho_{2} -\rho_{1})^{2}} \right) \nonumber
\end{eqnarray}
We consider points $\theta \in \partial B_{1}(0) \cap \partial B_{R}(x_{0})$. For these $\theta$, we have

\begin{equation} \label{intersection}
    R^{2} = \langle \boldsymbol{\theta} - \mathbf{x_{0}}, \boldsymbol{\theta} - \mathbf{x_{0}} \rangle = 1 + ||\mathbf{x_{0}}||^{2} -2 \langle \boldsymbol{\theta}, \mathbf{x_{0}} \rangle.
\end{equation}
Using the first line of \eqref{x'_length}, the first line of \eqref{R_length}, and the identity \eqref{s_{0}} for $s_{0}$, we have that

\begin{equation*}
    R^{2} - ||\mathbf{x_{0}}||^{2} = -s_{0} \rho_{2}^{2} + (s_{0} -1) \rho_{1}^{2} = -1,
\end{equation*}
and so from \eqref{intersection}

\begin{equation*}
    \langle \mathbf{x_{0}}, \boldsymbol{\theta} \rangle = 1 \hspace{1cm} \text{for} \hspace{1cm} \theta \in \partial B_{1} (0) \cap \partial B_{R}(x).
\end{equation*}
Thus $\langle \boldsymbol{\theta} - \mathbf{x_{0}}, \boldsymbol{\theta} \rangle = 0$, and so $\partial B_{R}(x_{0})$ intersects $\partial B_{1}(0)$ orthogonally. As a consequence, inversion about $B_{R}(x_{0})$ defines an isometry of the Poincare Ball. The distance $\lambda$ from $B_{R}(x_{0})$ and $0$ is related to $R$ by

\begin{equation*}
    \frac{1}{2} \left( \lambda^{-1} - \lambda \right) = R.
\end{equation*}
Using \eqref{R_length}, we compute an upper bound on $R$:

\begin{eqnarray*}
   R^{2} &=& \frac{(1-\rho_{1}^{2})(1-\rho_{2}^{2})}{(\rho_{1} + \rho_{2})^{2}} \left(1+ \rho_{1}\rho_{2} \frac{||\boldsymbol{\theta}_{2} - \boldsymbol{\theta}_{1}||^{2}}{(\rho_{2} -\rho_{1})^{2}} \right) \nonumber \\
   &\leq& \frac{(\rho_{1}^{-1}-\rho_{1})^{2}}{4} \left( 1 + \frac{(\rho_{+}^{-1} - \rho_{+})^{2} - (\rho_{1}^{-1} - \rho_{1})^{2}}{(\rho_{1}^{-1} - \rho_{1})^{2}} \right) \\
   &=& \frac{1}{4} \left( \rho_{+}^{-1} - \rho_{+} \right)^{2}.
\end{eqnarray*}
For the inequality, we used the fact that $\rho_{1} < \rho_{2}$ and assumption \eqref{lipschitz_est}. Since $\lambda^{-1} - \lambda \leq \rho_{+}^{-1} - \rho_{+}$, we deduce that $\lambda \geq \rho_{+}$. Let $\theta \in \partial B_{1}(0)$ lie on the line segment between $x_{0}$ and $0$. Since $B_{\rho_{+}}(0)$ contains $\Omega_{0}$, we have that $H_{\lambda,\theta} \cap \Omega_{0} = \varnothing$, and so we may apply the previous proposition in $H_{\lambda,\theta}$. In particular,

\begin{equation*}
    u_{\lambda,\theta} (x) \leq u (x), \hspace{2cm} x \in H_{\lambda,\theta}.
\end{equation*}
Since,

\begin{equation*}
    F_{\lambda,\theta} (\mathbf{x}_{2}) = \frac{R^{2}}{||\mathbf{x}_{2} - \mathbf{x}_{0}||^{2}} (\mathbf{x}_{2} - \mathbf{x}_{0}) + \mathbf{x}_{0} = \frac{R^{2}}{||\mathbf{x}_{2}-\mathbf{x}_{0}||^{2}} \frac{||\mathbf{x}_{2}-\mathbf{x}_{0}||}{||\mathbf{x}_{1} - \mathbf{x}_{0}||}(\mathbf{x}_{1} -\mathbf{x}_{0}) + \mathbf{x}_{0}=\mathbf{x}_{1},
\end{equation*}
we have that
\begin{equation*}
    u(x_{2}) \geq u(x_{1}),
\end{equation*}
as claimed.




\end{proof}
\begin{Remark}
Determining the correct relation between the $\rho$ and $\theta$ coordinates of $x_{1}$ and $x_{2}$ to allow for the reflection property was quite tedious, but worth it.
\end{Remark}

\begin{figure}
\begin{center}
\textbf{Comparison outside $B_{\rho_{+}}(0)$}
\vspace{0.5cm}
\end{center}
    \centering

    \begin{tikzpicture}[scale=3]

\tikzset{
    show curve controls/.style={
        decoration={
            show path construction,
            curveto code={
                \draw [blue, dashed]
                    (\tikzinputsegmentfirst) -- (\tikzinputsegmentsupporta)
                    node [at end, cross out, draw, solid, red, inner sep=2pt]{};
                \draw [blue, dashed]
                    (\tikzinputsegmentsupportb) -- (\tikzinputsegmentlast)
                    node [at start, cross out, draw, solid, red, inner sep=2pt]{};
            }
        }, decorate
    }
}

\draw [black, line width=2.5 pt, fill=gray, opacity=0.3] plot [smooth cycle] coordinates {(0,0.4) (-0.1,0.2) (-0.3,0.1) (-0.3,-0.1) (0.1,-0.35)  (0.3,-0.2) (0.35,0) (0.15,0.1) (0.1,0.3)};
    
 \draw[line width= 2 pt](0,0) circle (1.0) ;
 \coordinate (0) at (0,0) ;
 \coordinate (x) at (-0.769232,0.87077);
 \coordinate (y) at (-0.37737,0.42719);
\draw[thick] (0) circle [radius=0.41];
    \draw (0,0) node{\Large{.}};
    \draw (-0.1,1.2) node[anchor=west]{$F_{\lambda,\theta}(x_{1})=x_{2}$};
  \draw (0,0) node[anchor=west]{$\Omega_{0}$};
  \draw (0,0.5) node{\Large{.}};
  \draw (0,0.5) node[anchor=south]{$\mathbf{x_{1}}$};
  \draw (-0.4,0.6928) node{\Large{.}};
  \draw (-0.4,0.6928) node[anchor=south]{$\mathbf{x_{2}}$}; 
  \draw (-0.769232,0.87077) node{\Large{.}}; 
  \draw[line width= 2 pt] (x) circle[radius=0.591617];
  \draw (-0.37514,0.424657) node[anchor=west]{$\lambda$};
  \draw (-0.769232,0.87077) node[anchor=south]{$\mathbf{x}_{0}$};
  \draw (-0.6,0.5) node{\color{blue}{$H_{\lambda,\theta}$}};
  \draw (0,0) -- (-0.37737,0.42719);
  \draw[dotted,thick] (0,0) -- (-0.28284,-0.28284);
  \draw (-0.14142,-0.14142) node[anchor=west]{$\rho_{+}$};
  \draw (-0.9,1.2) node{$B_{R}(x_{0})$};
  \draw[ultra thick] (-0.19,0.88)-- (-0.09,0.88);
  \draw[ultra thick] (-0.09,0.88) -- (-0.09,0.98);
  \draw[ultra thick] (-0.86,0.29) -- (-0.88,0.19);
  \draw[ultra thick] (-0.88,0.19) -- (-0.98, 0.21);
  \begin{scope}
   \clip (x) circle[radius=0.591617];
   \fill[blue!50, opacity=0.2] (0) circle[radius=1]; 
  \end{scope}
  \draw (0,-0.7) node {$\left(B_{1}(0),g_{\mathbb{H}^{n}}\right)$};
    \end{tikzpicture}
    \caption{One can compare $x_{1}$ and $x_{2}$ if their bisecting sphere does not contact $\Omega_{0}$. This happens whenever $x_{1}$ and $x_{2}$ lie outside $B_{\rho_{+}}(0)$ and satisfy \eqref{lipschitz_est}.}
    \label{comp2}
\end{figure}
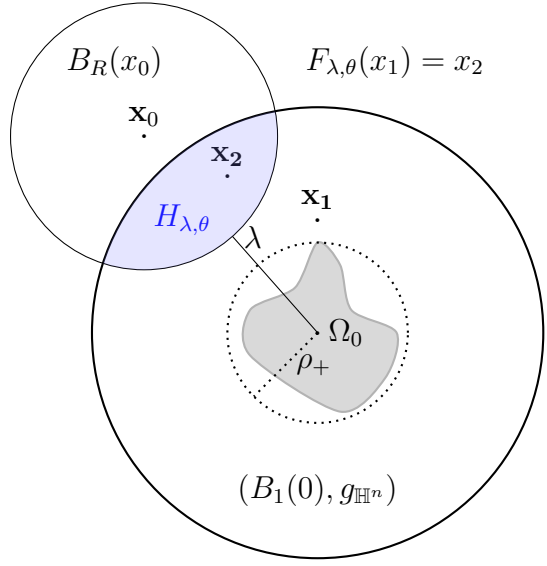

\begin{Corollary}
For each $\mathbf{x}_{1}= \rho_{1} \boldsymbol{\theta}_{1}, \mathbf{x}_{2}= \rho_{2} \boldsymbol{\theta}_{2} \in \Sigma_{t} \setminus B_{\rho_{+}}(0)$, we have that

\begin{equation} \label{lipschitz_bound}
    \frac{\rho_{2} - \rho_{1}}{||\boldsymbol{\theta}_{2} - \boldsymbol{\theta}_{1}||} \leq \left( \frac{\rho_{1}\rho_{2}}{ (\rho_{+}^{-1} - \rho_{+})^{2} - (\rho_{1}^{-1} - \rho_{1})^{2}}\right)^{\frac{1}{2}} (\rho_{1}^{-1} - \rho_{1}).
\end{equation}
As a result, $\Sigma_{t}= \{ (\rho_{t}(\theta), \theta) | \theta \in \partial B_{1}(0) \} \subset \mathbb{H}^{n}$ is a Lipschitz graph satisfying \eqref{lipschitz_bound} for each $t \in (T,\infty)$, where is $T$ given in \eqref{waiting_time}.
\end{Corollary}
\begin{proof}

We proceed by contradiction: suppose \eqref{lipschitz_bound} is false for some $x_{1}, x_{2} \in \Sigma_{t} \setminus B_{\rho_{+}}(0)$, and choose a sequence of points $x^{(i)}_{2} \in \Omega_{t}$ converging to $x_{2}$. If \eqref{lipschitz_bound} is false, then for $i$ large enough we would have $\rho_{2}^{(i)} > \rho_{1}$ and

\begin{equation*}
    \frac{\rho^{(i)}_{2} - \rho_{1}}{||\boldsymbol{\theta}_{2}^{(i)} - \boldsymbol{\theta}_{1}||} \geq \left( \frac{\rho_{1}\rho^{(i)}_{2}}{ (\rho_{+}^{-1} - \rho_{+})^{2} - (\rho_{1}^{-1} - \rho_{1})^{2}}\right)^{\frac{1}{2}} (\rho_{1}^{-1} - \rho_{1}).
\end{equation*}
Theorem \eqref{outside} then implies that $u(x^{(i)}_{2}) \geq u(x_{1})$. On the other hand, we have $u(x_{2}^{(i)}) < t$ and $u(x_{1}) = t$ by definition. This is a contradiction, and \eqref{lipschitz_bound} holds on $\Sigma_{t} \setminus B_{\rho_{+}}(0)$. 

We also argue that $\Sigma_{t} \subset B_{1}(0) \setminus \overline{B_{\rho_{+}}(0)}$ for $t > T$. Setting $\rho_{-}= \min_{\Sigma_{0}} \rho$, one easily verifies that 

\begin{equation} \label{ball_comp}
    \widetilde{\Omega}_{t} = B_{\rho(t)}(0), \hspace{2cm} \rho(t)^{-1} - \rho(t)= e^{-\frac{t}{n-1}} (\rho^{-1} - \rho_{-}), 
\end{equation}
is an IMCF with initial condition $B_{\rho_{-}}(0)$. Using the relation

\begin{equation*}
    \sinh(r) = \frac{1}{2} (\rho^{-1} - \rho)^{-1},
\end{equation*}
we see from \eqref{ball_comp} that for $t > T= -(n-1) \log \left( \frac{\rho_{+}^{-1} - \rho_{+}}{\rho^{-1}_{-} - \rho_{-}} \right)$,
\begin{equation*}
    \rho(t)^{-1} - \rho(t) < \rho^{-1}_{+} - \rho_{-},
\end{equation*}
and hence $\overline{B_{\rho_{+}}(0)} \subset \widetilde{\Omega_{t}}$ for $t \in (T,\infty)$. Since $\widetilde{\Omega}_{0} \subset \Omega_{0}$, $\widetilde{\Omega}_{t} \subset \Omega_{t}$ for each $t >0$ by Theorem \ref{HI_comp} (ii).



\end{proof}

To prove Theorem \ref{star_shape}, we invoke the work of Li-Wei \cite{LW17}. \cite{LW17} establishes a Harnack-type inequality for star-shaped IMCFs in $\mathbb{H}^{n}$ which follows earlier work on star-shaped solutions, c.f. \cite{G11}, \cite{D11}, \cite{S17}, and \cite{L16}. Roughly speaking, this estimate implies that a $C^{1}$ star-shaped $\Sigma_{0} \subset \mathbb{H}^{n}$ with non-negative weak mean curvature may be taken as initial data for smooth IMCF.
\begin{proof}[Proof of Theorem \ref{star_shape} and Theorem \ref{rigidity}]

 Let $u \in C^{0,1}_{\text{loc}}(\mathbb{H}^{n})$ be a proper weak IMCF with initial condition $\Omega_{0}$. For any $t_{0} > T$, \eqref{lipschitz_bound} implies that $\Sigma_{t_{0}}$ is a strictly star-shaped $C^{1,\alpha}$ hypersurface with weak mean curvature $H \geq 0$. Therefore, by \cite{LW17}, Theorem 1.2, there exists a smooth star-shaped IMCF $\{ \widetilde{\Sigma_{t}} \}_{t_{0} < t < \infty}$ that foliates $\mathbb{H}^{n} \setminus \overline{\Omega_{t_{0}}}$. Thus the function $\widetilde{u}: \mathbb{H}^{n} \rightarrow \mathbb{R}$ defined by 

\begin{equation*}
    \widetilde{u}(x) = \begin{cases}
    u(x) & x \in \overline{\Omega_{t_{0}}} \\
    t & x \in \widetilde{\Sigma_{t}} \subset \mathbb{H}^{n} \setminus \overline{\Omega_{t_{0}}}
    \end{cases}
\end{equation*}
is a proper weak solution to IMCF with initial condition $\Omega_{0}$, c.f. Lemma 2.3 in \cite{HI99}. By Theorem \ref{HI_comp}(iii), we must have that $\widetilde{u}=u$ on $\mathbb{H}^{n}$ and hence $\Sigma_{t}$ is smooth for $t \in (T,\infty)$.

Once again from \eqref{lipschitz_bound}, we have for each $t \in (T,\infty)$ that the graph $\rho_{t}(\theta)$ satisfies

\begin{equation} \label{lipschitz}
    |D\rho_{t}|_{\mathbb{S}^{n-1}} \leq \frac{1 - \rho_{t}^{2}}{\left((\rho_{+}^{-1} - \rho_{+})^{2} - (\rho_{t}^{-1} - \rho_{t})^{2}\right)^{\frac{1}{2}}}.
\end{equation}
By the relations

\begin{eqnarray*}
 r(\rho) &=& \log \left( \frac{1+\rho}{1-\rho} \right), \\
    \sinh(r) &=& \frac{2}{\rho^{-1} - \rho},
\end{eqnarray*}
we get for the geodesic variable $r$ that
\begin{eqnarray*}
    |Dr_{t}|_{\mathbb{S}^{n-1}} &=& \frac{2}{1-\rho_{t}^{2}} |D\rho_{t}|_{\mathbb{S}^{n-1}} \\
             &\leq& \frac{2}{\left( (\rho_{+}^{-1} - \rho_{+})^{2} - (\rho_{t}^{-1} - \rho_{t})^{2} \right)^{\frac{1}{2}}}= \frac{1}{\left( \sinh^{-2}(r_{+}) - \sinh^{-2}(r_{t}) \right)^{\frac{1}{2}}} \\
             &=& \frac{\sinh(r_{+}) \sinh(r_{t})}{\left( \sinh^{2}(r_{t}) - \sinh^{2}(r_{+}) \right)^{\frac{1}{2}}}.
\end{eqnarray*}
To briefly address Theorem \ref{rigidity}, let $\{ \Omega_{t} \}_{t \in (-\infty,\infty)}$ be a proper weak IMCF on $\mathbb{H}^{n} \setminus \{ 0 \}$. Notice that $r_{+}(t)=\max_{\Sigma_{t}} r \rightarrow 0$ as $t \rightarrow -\infty$ by continuity. Hence $\Sigma_{t}$ is star-shaped and $Dr_{t}=0$ on $\Sigma_{t}$ from the above estimate, thus proving the assertion.
\end{proof}

\section{The Heintze-Karcher Inequality for Weak Mean Curvature}
In \cite{B13} and following earlier work by Heintze-Karcher \cite{HK78}, Brendle proved the following inequality for mean-convex domains in $\mathbb{H}^{n}$.

\begin{Theorem}[\cite{B13}, Theorem 3.5]
Let $\Omega_{0} \subset \mathbb{H}^{n}$ be a bounded domain with smooth $H >0$ boundary $\partial \Omega= \Sigma_{0}$. Then we have the integral inequality
    \begin{equation} \label{brendle_inequality}
    (n-1) \int_{\Sigma_{0}} \frac{f}{H} d\sigma \geq n \int_{\Omega_{0}} f d\Omega.
\end{equation}
\end{Theorem}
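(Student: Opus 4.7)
The plan is to follow Brendle's approach, which rests on two structural facts about $(\mathbb{H}^{n}, g_{\mathbb{H}^{n}})$: the potential $f = \cosh(r)$ satisfies the static equation $\nabla^{2} f = f \cdot g_{\mathbb{H}^{n}}$, and the Ricci tensor equals $-(n-1)g_{\mathbb{H}^{n}}$. The first identity lets $f$ interact cleanly with Jacobi fields of inward geodesics; the second makes Bochner/Reilly integrands manageable. I would organize the proof around an inward parallel flow, with a clean coarea reduction to an ODE comparison in $t$.

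First, set $\nu$ to be the outward unit normal to $\Sigma_{0}$ and define the inward map $\Phi : \Sigma_{0} \times [0, \infty) \to \mathbb{H}^{n}$ by $\Phi(p,t) = \exp_{p}(-t\nu(p))$. Let $\tau(p) > 0$ be the first time $\Phi(p,\cdot)$ hits the focal locus or leaves $\overline{\Omega_{0}}$. Standard cut-locus theory shows that $\Omega_{0}$ is covered up to a set of measure zero by $\Phi(\{(p,t) : 0 \leq t \leq \tau(p)\})$, with Jacobian $J(p,t) = \prod_{i=1}^{n-1}\phi_{i}(p,t)$ where $\phi_{i}$ solves the Jacobi equation $\phi_{i}'' - \phi_{i} = 0$ with $\phi_{i}(p,0) = 1$, $\phi_{i}'(p,0) = -\kappa_{i}(p)$ (here $\kappa_{i}$ are the principal curvatures of $\Sigma_{0}$). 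The AM--GM inequality combined with concavity of $\log$ yields the pointwise bound
\begin{equation*}
J(p,t) \leq \left( \phi(p,t) \right)^{n-1}, \qquad \phi(p,t) := \cosh(t) - \tfrac{H(p)}{n-1}\sinh(t),
\end{equation*}
and $\phi$ stays positive on $[0, \tau(p)]$ by the definition of the focal time.

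Next, using $\nabla^{2}f = f g_{\mathbb{H}^{n}}$, the restriction $y(t) := f(\Phi(p,t))$ satisfies $y'' = y$, so $y(t) = f(p)\cosh(t) - \langle \nabla f, \nu \rangle(p)\sinh(t)$. Applying the coarea formula and the bound on $J$,
\begin{equation*}
\int_{\Omega_{0}} f\, d\Omega \leq \int_{\Sigma_{0}} \int_{0}^{\tau(p)} y(t)\, \phi(p,t)^{n-1}\, dt\, d\sigma(p).
\end{equation*}
The crucial step is to show that for each $p$,
\begin{equation*}
\int_{0}^{\tau(p)} y(t)\, \phi(p,t)^{n-1}\, dt \leq \frac{n-1}{n}\cdot \frac{f(p)}{H(p)}.
\end{equation*}
This is a one-variable ODE identity: $\phi$ satisfies $\phi'' = \phi$ with $\phi(0) = 1$ and $\phi'(0) = -H/(n-1)$, so $\frac{d}{dt}\bigl[y \cdot \phi^{n-1}\bigr]$ admits a clean antiderivative after using $y'' = y$ and $\phi'' = \phi$, allowing the inner integral to be evaluated at $t = \tau(p)$ (where $\phi = 0$) versus $t = 0$. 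Integrating yields exactly $\frac{n-1}{n}\frac{f(p)}{H(p)}$, because the $\cosh$ pieces from $y$ and from $\phi^{n-1}$ balance.

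The main obstacle is verifying the sharp one-dimensional identity in the third step, since one must exploit the full matching between $\nabla^{2}f = fg$ and the Jacobi equation $\phi'' = \phi$ to ensure that the $\cosh(t)$-contribution of the weight $f$ cancels against the leading hyperbolic behavior of $\phi^{n-1}$. In the Euclidean analogue the parallel statement reduces to $\int_{0}^{(n-1)/H}(1 - tH/(n-1))^{n-1}\, dt = (n-1)/(nH)$ and no weight, so the novelty in hyperbolic space is entirely in how $f$ meshes with the Jacobi structure of $\mathbb{H}^{n}$. An alternative route, which I would keep as a backup, is Brendle's elliptic argument: solve the Neumann problem $\Delta \varphi = nf$ in $\Omega_{0}$ with $\partial_{\nu}\varphi = \frac{(n-1)f}{H}|_{\Sigma_{0}}$ adjusted by a constant for compatibility, then apply Reilly's formula and use $\nabla^{2}f = fg$ and $\mathrm{Ric}_{\mathbb{H}^{n}} = -(n-1)g$ together with $|\nabla^{2}\varphi|^{2} \geq \frac{1}{n}(\Delta\varphi)^{2}$ to extract the inequality as a sum of manifestly non-negative terms, with equality characterizing geodesic spheres.
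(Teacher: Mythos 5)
The paper does not prove this statement itself: it is quoted verbatim from Brendle \cite{B13}, Theorem~3.5, and used as a black box. So the relevant question is only whether your reconstruction of Brendle's argument is sound, and unfortunately the key step fails.

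Your setup (inward normal exponential map, coarea decomposition, Jacobi fields $\phi_i$ with $\phi_i''=\phi_i$, AM--GM giving $J\le\phi^{n-1}$ with $\phi(t)=\cosh t-\frac{H}{n-1}\sinh t$, and $y(t)=f(\Phi_t(p))$ solving $y''=y$) is all correct. But the claimed ``one-variable ODE identity''
\begin{equation*}
\int_0^{\tau(p)} y(t)\,\phi(p,t)^{n-1}\,dt \;\le\; \frac{n-1}{n}\,\frac{f(p)}{H(p)}
\end{equation*}
is false as a pointwise statement, and in particular is not an identity. For $n=2$, with $a=f(p)$, $b=\langle\nabla f,\nu\rangle(p)$, $h=H(p)$, and $\tau_\phi=\arctanh(1/h)$ the first zero of $\phi$, a direct computation gives
\begin{equation*}
\int_0^{\tau_\phi}\bigl(a\cosh t-b\sinh t\bigr)\bigl(\cosh t-h\sinh t\bigr)\,dt \;=\; \frac{(a-bh)\,\tau_\phi}{2}+\frac{b}{2},
\end{equation*}
which equals the target $a/(2h)$ only when $a=bh$, i.e.\ $\langle\nabla f,\nu\rangle=(n-1)f/H$ (the geodesic-sphere relation). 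Since $\tau_\phi>1/h$, the integral strictly exceeds $a/(2h)$ whenever $a>bh$. A concrete counterexample: let $\Omega_0\subset\mathbb{H}^2$ be a geodesic disk of radius $\rho$ whose boundary passes through the origin, and take $p\in\Sigma_0$ at the origin. Then $f(p)=1$, $\langle\nabla f,\nu\rangle(p)=0$, $H(p)=\coth\rho$, $\phi(p,t)=\sinh(\rho-t)/\sinh\rho$ (so cut time $=$ focal time $=\rho$), and
\begin{equation*}
\int_0^\rho \cosh t\cdot\frac{\sinh(\rho-t)}{\sinh\rho}\,dt=\frac{\rho}{2} \;>\; \frac{1}{2}\cdot\frac{\sinh\rho}{\cosh\rho}=\frac{n-1}{n}\,\frac{f(p)}{H(p)}.
\end{equation*}
The overall inequality still holds here, but only because the excess at this $p$ is cancelled by slack at the antipodal point of $\Sigma_0$; the pointwise bound you propose is simply the wrong organizing principle, and AM--GM plus a closed-form evaluation at the focal time cannot produce it. Brendle's actual argument works with a monotone quantity along the parallel flow built from the full Riccati inequality for the evolving mean curvature (not a pointwise evaluation at $\tau(p)$), and your backup route via a weighted Reilly/Bochner identity is also viable -- though there the natural auxiliary problem is the \emph{Dirichlet} problem $\Delta\varphi=nf$ in $\Omega_0$, $\varphi=0$ on $\Sigma_0$, not a Neumann problem. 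As written, your main argument does not establish the theorem.
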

The proof of the Minkowski inequality for star-shaped hypersurfaces by Brendle-Hung-Wang crucially relies on \eqref{brendle_inequality}, and so a proof involving weak IMCF requires this as well. The goal of this section is to suitably extend \eqref{brendle_inequality} to a proper weak solution $\{ \Omega_{t} \}_{t \in (0,\infty)}$ of IMCF. Our approach here is an approximation by mean-convex mean curvature flow used in \cite{LW17} and \cite{HI08}.

\begin{Theorem}[\cite{LW17}, Lemma 4.2, c.f also \cite{HI08}, Lemma 2.6] \label{mcf_approximation}
Let $\Sigma_{0}=X_{0}(\Sigma^{n-1}) \subset \mathbb{H}^{n}$ be an immersion of class $C^{1}$ with bounded and non-negative measurable weak mean curvature $H$. Then $\Sigma_{0}$ is of class $C^{1,\alpha} \cap W^{2,p}$, and there exists a solution $X: \Sigma^{n-1} \times (0,\epsilon_{0}) \rightarrow (\mathbb{H}^{n},g_{\mathbb{H}^{n}})$ to mean curvature flow (MCF),

\begin{eqnarray} \label{mcf}
    \frac{\partial}{\partial \epsilon} X(x,\epsilon) = -H \nu(x,\epsilon), \hspace{2cm} (x,\epsilon) \in \Sigma^{n-1} \times (0, \epsilon_{0}),
\end{eqnarray}
converging to $\Sigma_{0}$ locally in $C^{1,\alpha} \cap W^{2,p}$, $\alpha \in (0,1), p \in (1,\infty)$ and with  $H(x,\epsilon) \rightarrow H(x)$ strongly in $L^{p}$ for $p \in [1,\infty)$ as $\epsilon \rightarrow 0$ \footnote{See the proof of \cite{HI08}, Lemma 2.6 on pages 447-448 for the convergence of $X_{\epsilon}$ and the $L^{p}$ convergence of $H_{\epsilon}$}. Furthermore, for every $\epsilon \in (0,\epsilon_{0})$ we have $\min_{\Sigma_{\epsilon}} H > 0$.
\end{Theorem}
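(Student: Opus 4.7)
The plan is to prove the statement in four steps corresponding to the four claims: the improved regularity of $\Sigma_{0}$, the short-time existence of MCF, the instantaneous positivity of $H$ under the flow, and the $L^{p}$ convergence of mean curvatures.

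\textbf{Step 1 (Regularity of $\Sigma_{0}$).} I would first work locally, representing $\Sigma_{0}$ as a $C^{1}$ graph $x_{n}=h(x')$ over a tangent hyperplane (using the Poincar\'e ball chart so that the ambient metric is a $C^{\infty}$ conformal multiple of the Euclidean one). In this chart, the weak mean curvature condition reads, in divergence form,
\begin{equation*}
\DIV\!\left(\frac{\nabla h}{\sqrt{1+|\nabla h|^{2}}}\right) \;=\; \widetilde{H}\sqrt{1+|\nabla h|^{2}}
\end{equation*}
with $\widetilde{H}\in L^{\infty}$ (a smooth conformal correction of $H$). Since $h\in C^{1}$ so that the coefficients are continuous and uniformly elliptic on small balls, standard quasilinear elliptic regularity (Gilbarg--Trudinger, Ch.~9 and Ch.~13) upgrades $h$ to $W^{2,p}$ for every $p\in(1,\infty)$ and hence to $C^{1,\alpha}$ for every $\alpha\in(0,1)$ by Sobolev embedding. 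A finite cover and partition of unity then gives $\Sigma_{0}\in C^{1,\alpha}\cap W^{2,p}$ globally.

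\textbf{Step 2 (Short-time MCF).} I would smooth the graph functions by convolution, $h_{\delta}=h\ast\eta_{\delta}$, and glue the local smoothings together to obtain a sequence of smooth closed hypersurfaces $\Sigma_{0}^{\delta}\to\Sigma_{0}$ in $C^{1,\alpha}\cap W^{2,p}$ with uniformly bounded second fundamental form in $L^{p}$. Standard smooth MCF theory provides short-time solutions $X^{\delta}:\Sigma^{n-1}\times[0,\eps_{\delta})\to\mathbb{H}^{n}$. The key is to show $\eps_{\delta}\geq\eps_{0}$ uniformly and to extract a subsequential limit converging in $C^{1,\alpha}\cap W^{2,p}$ to a solution with initial datum $\Sigma_{0}$. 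This I would do by combining: (i) an Ecker--Huisken type local gradient estimate for MCF written as a graph, giving uniform $C^{1,\alpha}$ bounds on short time from the uniform gradient bound on $h_{\delta}$; (ii) parabolic $L^{p}$ theory (Lieberman) applied to the quasilinear graph equation $\partial_{\eps}h=\sqrt{1+|\nabla h|^{2}}\,\widetilde{H}$ to promote the gradient control to uniform $W^{2,p}$ bounds on $[0,\eps_{0}]$. Uniqueness of the limit flow follows once we are in the quasilinear parabolic regime where data is in $W^{2,p}$ with large $p$, so any such subsequential limit is the unique MCF starting from $\Sigma_{0}$.

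\textbf{Step 3 (Positivity of $H$).} Under MCF in $\mathbb{H}^{n}$, a standard computation gives
\begin{equation*}
\partial_{\eps}H \;=\; \Delta_{\Sigma_{\eps}}H \;+\; (|A|^{2}-(n-1))H.
\end{equation*}
This is a linear parabolic equation for $H$ with bounded coefficients (by the $C^{1,\alpha}\cap W^{2,p}$ control from Step~2). Since $H\geq 0$ on $\Sigma_{0}$ with $H\not\equiv 0$ (in our application $H=|\nabla u|>0$ on a set of positive measure by property~(iii) of Theorem~\ref{imcf_properties}), Hamilton's strong maximum principle for tensors, or simply the scalar parabolic strong maximum principle, forces $H>0$ everywhere on $\Sigma_{\eps}$ for each $\eps>0$; compactness of $\Sigma_{\eps}$ then yields $\min_{\Sigma_{\eps}}H>0$.

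\textbf{Step 4 ($L^{p}$ convergence).} Since the mean curvature of a graph is a smooth function of $(\nabla h,\nabla^{2}h)$ and the $W^{2,p}$ convergence of $X_{\eps}\to X_{0}$ gives $\nabla^{2}h_{\eps}\to\nabla^{2}h_{0}$ in $L^{p}$ while $\nabla h_{\eps}\to\nabla h_{0}$ uniformly (by compact embedding), one obtains $H_{\eps}\to H_{0}$ strongly in $L^{p}$ for every $p\in[1,\infty)$.

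The main obstacle is Step~2: running MCF from merely $C^{1,\alpha}\cap W^{2,p}$ initial data and, crucially, obtaining both a uniform existence time $\eps_{0}$ and convergence of $X^{\delta}$ in the sharp topology $C^{1,\alpha}\cap W^{2,p}$. This requires combining an Ecker--Huisken-style local gradient estimate (to prevent degeneration of the quasilinear parabolic operator) with parabolic $L^{p}$ theory applied carefully to the graph equation. Everything else is a direct consequence of well-established elliptic and parabolic machinery.
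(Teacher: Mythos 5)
Theorem \ref{mcf_approximation} is stated without proof in the paper; it is imported from \cite{LW17}, Lemma 4.2 and \cite{HI08}, Lemma 2.6 (the footnote explicitly refers to pages 447--448 of the latter). Your four-step reconstruction --- quasilinear elliptic regularity to lift $\Sigma_0$ to $C^{1,\alpha}\cap W^{2,p}$, graphical short-time MCF via Ecker--Huisken-type gradient bounds plus parabolic $L^p$ theory, the strong maximum principle for $H$, and $L^p$ convergence of $H$ from $W^{2,p}$ convergence of the graphs --- is precisely the argument in those references, so the approach matches. One small tightening: in Step~3 you need not appeal to the IMCF application to secure $H\not\equiv 0$; a closed $C^1$ hypersurface in $\mathbb{H}^n$ cannot be weakly minimal (compare against the smallest enclosing geodesic sphere, whose mean curvature is $(n-1)\coth R>0$), so $H\not\equiv 0$ is automatic whenever $H\geq 0$ and $\Sigma_0$ is closed. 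A second minor point: gluing convolutions from distinct local charts into a single smooth closed hypersurface in Step~2 requires care; the cleaner device, as in \cite{HI08}, is to work with a single global normal graph over a fixed smooth reference hypersurface and mollify the graph function once.
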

Originally, Huisken-Ilmanen used this approximation to establish weak flow smoothing in $\mathbb{R}^{n}$, but here it is also useful for extending inequality \eqref{brendle_inequality} to $C^{1}$ hypersurfaces $\Sigma_{0}$ with non-negative bounded weak mean curvature. We study the reciporical mean curvature $H^{-1}_{\epsilon}$ of $\Sigma_{\epsilon}$ under MCF as $\epsilon \searrow 0$. $|| H_{\epsilon}^{-1}||_{L^{1}(\Sigma_{\epsilon})}$ is monotonically decreasing in $\epsilon$ under MCF, and this suggests $L^{1}$ convergence as $\epsilon \searrow 0$. However, we must handle the case where $H^{-1}$ is integrable and $\text{essinf}_{\Sigma_{0}} H = 0$ delicately.
\begin{Proposition} \label{approx_liminf}
Let $\Sigma_{0}$, $\Sigma_{\epsilon}$ be as in Theorem \ref{mcf_approximation}, and suppose that inverse weak mean curvature $H(x,0)^{-1}$ is $L^{1}$ integrable. Then there is a subsequence $\epsilon_{j} \searrow 0$ of times such that the inverse mean curvature $H(x,\epsilon)^{-1}$ of $\Sigma_{\epsilon_{j}}$ converges to $H(x,\epsilon)^{-1}$ strongly in $L^{1}$ as $\epsilon_{j} \searrow 0$.
\end{Proposition}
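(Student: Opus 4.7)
The plan is to reduce strong $L^1$ convergence to convergence of $L^1$ norms via Scheffé's lemma, bounding $\|H_{\epsilon_j}^{-1}\|_{L^1}$ from below by Fatou and from above by the MCF monotonicity hinted at in the preamble.

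First, the strong $L^p$ convergence $H_\epsilon \to H$ from Theorem \ref{mcf_approximation}, combined with the $C^{1,\alpha}$ convergence of the parametrizations $X_\epsilon \to X_0$, produces a subsequence $\epsilon_j \searrow 0$ along which $H_{\epsilon_j} \to H$ pointwise $\mathcal{H}^{n-1}$-a.e.\ on $\Sigma_0$ (after identifying $\Sigma_{\epsilon_j}$ with $\Sigma_0$ via the MCF diffeomorphism). Integrability of $H^{-1}$ forces $H > 0$ a.e., so $H_{\epsilon_j}^{-1} \to H^{-1}$ pointwise a.e.\ as well. Fatou's lemma applied via these parametrizations gives the lower bound
\[\int_{\Sigma_0} H^{-1}\, d\sigma_0 \;\leq\; \liminf_{j \to \infty} \int_{\Sigma_{\epsilon_j}} H_{\epsilon_j}^{-1}\, d\sigma_{\epsilon_j}.\]

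For the matching upper bound I appeal to the MCF monotonicity of $\|H_\epsilon^{-1}\|_{L^1(\Sigma_\epsilon)}$ in $\epsilon$ flagged in the preamble. For $\epsilon > 0$ the flow is classical and a direct evolution computation, with Newton's inequality $|A|^2 \geq H^2/(n-1)$ used to absorb the sign-indefinite ambient Ricci term specific to $\mathbb{H}^n$, yields $\tfrac{d}{d\epsilon}\|H_\epsilon^{-1}\|_{L^1} \leq 0$, and integrating produces $\|H_\epsilon^{-1}\|_{L^1(\Sigma_\epsilon)} \leq \|H_\delta^{-1}\|_{L^1(\Sigma_\delta)}$ for $0 < \delta < \epsilon$. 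To transfer this bound down to $\delta = 0$, I plan to truncate at height $M$, writing $H_\epsilon^{-1} = \min(H_\epsilon^{-1}, M) + (H_\epsilon^{-1} - M)_+$: the bounded part converges to $\min(H^{-1}, M)$ in $L^1$ by dominated convergence, while the tail $(H_\epsilon^{-1}-M)_+$ is controlled uniformly in $\epsilon$ by coupling the monotonicity $L^1$ bound with the a.e.\ convergence and the integrability of $H^{-1}$, and then sending $M \to \infty$ makes it vanish. This gives $\limsup_j \int_{\Sigma_{\epsilon_j}} H_{\epsilon_j}^{-1}\, d\sigma_{\epsilon_j} \leq \int_{\Sigma_0} H^{-1}\, d\sigma_0$.

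Combining the two bounds gives norm convergence $\int_{\Sigma_{\epsilon_j}} H_{\epsilon_j}^{-1}\, d\sigma_{\epsilon_j} \to \int_{\Sigma_0} H^{-1}\, d\sigma_0$, and Scheffé's lemma (equivalently: dominated convergence on $(H_{\epsilon_j}^{-1}-H^{-1})_-$, bounded by $H^{-1}$, together with the norm identity $\int(H_{\epsilon_j}^{-1} - H^{-1})_+ = \int(H_{\epsilon_j}^{-1}-H^{-1})_- + \int(H_{\epsilon_j}^{-1}-H^{-1})$) then yields $H_{\epsilon_j}^{-1} \to H^{-1}$ strongly in $L^1$. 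The main obstacle is the tail estimate in the degenerate case $\text{essinf}_{\Sigma_0} H = 0$, where $H^{-1}$ is unbounded and no naive pointwise dominating function is available to bypass monotonicity; the hypothesis $H^{-1} \in L^1(\Sigma_0)$ enters essentially at exactly this step.
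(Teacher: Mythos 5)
Your Scheffé-lemma outline is the right overall shape and matches the spirit of the paper's argument, but both of your technical pillars for the upper bound $\limsup_j \int_{\Sigma_{\epsilon_j}} H_{\epsilon_j}^{-1}\,d\sigma \leq \int_{\Sigma_0} H^{-1}\,d\sigma$ have genuine holes. First, Newton's inequality does not absorb the Ricci term. Under MCF in $\mathbb{H}^n$ the evolution $\partial_\epsilon H = \Delta H + (|h|^2 - (n-1))H$ gives, after integration by parts,
\begin{equation*}
\frac{d}{d\epsilon}\int_{\Sigma_\epsilon} H^{-1}\,d\sigma = -2\int H^{-3}|\nabla H|^2\,d\sigma - \int H^{-1}|h|^2\,d\sigma + (n-1)\int H^{-1}\,d\sigma - \int H\,d\sigma,
\end{equation*}
and the positive term $(n-1)\int H^{-1}\,d\sigma$ is not cancelled by the Newton estimate $-\int H^{-1}|h|^2 \leq -(n-1)^{-1}\int H$; the two combine to a negative multiple of $\int H$, which does nothing against $(n-1)\int H^{-1}$. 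The quantity that \emph{is} monotone is $\int_{\Sigma_\epsilon}(e^{(n-1)\epsilon}H_\epsilon)^{-1}\,d\sigma$: the exponential weight turns the evolution into $\partial_\epsilon(e^{(n-1)\epsilon}H) = \Delta(e^{(n-1)\epsilon}H) + |h|^2 e^{(n-1)\epsilon}H$ with no linear source, and then decay is automatic without Newton's inequality. So $\|H_\epsilon^{-1}\|_{L^1}$ can in fact increase in $\epsilon$; what you get is $\int_{\Sigma_\epsilon} H_\epsilon^{-1} \leq e^{(n-1)\epsilon}\int_{\Sigma_\delta}H_\delta^{-1}$, which is harmless for the limsup but is not the monotonicity you stated.

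Second, and more seriously, the truncation scheme for transferring the bound from $\delta>0$ to $\delta=0$ is circular as sketched. Your monotonicity gives $\int_{\Sigma_\epsilon}H_\epsilon^{-1}\leq C(\epsilon)\int_{\Sigma_\delta}H_\delta^{-1}$, but you then need $\lim_{\delta\to0}\int_{\Sigma_\delta}H_\delta^{-1}=\int_{\Sigma_0}H^{-1}$, which is precisely the conclusion you are after; decomposing $H_\epsilon^{-1}=\min(H_\epsilon^{-1},M)+(H_\epsilon^{-1}-M)_+$ does not break this loop because the truncated quantity satisfies no useful evolution inequality and you have no a priori uniform-in-$\epsilon$ control on the tail. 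The paper avoids this by regularizing the integrand rather than truncating: the quantity $\int_{\Sigma_\epsilon}(e^{(n-1)\epsilon}H_\epsilon + k^{-1})^{-1}\,d\sigma$ is simultaneously monotone non-increasing in $\epsilon$ \emph{and} uniformly bounded by $k|\Sigma_\epsilon|$, so the $L^p$ convergence $H_\epsilon\to H$ lets one pass $\epsilon\to 0$ for fixed $k$ to obtain $\int_{\Sigma_\epsilon}(e^{(n-1)\epsilon}H_\epsilon+k^{-1})^{-1}\leq\int_{\Sigma_0}(H+k^{-1})^{-1}$, after which monotone convergence in $k$ gives $\int_{\Sigma_\epsilon}H_\epsilon^{-1}\leq e^{(n-1)\epsilon}\int_{\Sigma_0}H^{-1}$. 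That regularization is the missing ingredient in your plan.
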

\begin{proof}

For each $k \in \mathbb{N}$, we consider the quantity

\begin{equation} \label{beta}
    \psi_{k}(\epsilon) =\int_{\Sigma_{\epsilon}} \frac{1}{e^{(n-1)\epsilon} H + k^{-1}} d\sigma.
\end{equation}
under \eqref{mcf}. The variation formula for mean curvature under MCF in $\mathbb{H}^{n}$ is

\begin{eqnarray*}
    \frac{\partial}{\partial \epsilon} H &=& \Delta_{\Sigma_{\epsilon}} H + \left( |h|^{2} + \text{Ric}(\nu,\nu) \right) H \label{mcf_variation}  \\
    &=& \Delta_{\Sigma_{\epsilon}} H+ \left( |h|^{2} - (n-1) \right) H, \nonumber
\end{eqnarray*}
and so

\begin{equation*}
    \frac{\partial}{\partial \epsilon} e^{(n-1)\epsilon} H = \Delta_{\Sigma_{\epsilon}} e^{(n-1)\epsilon} H + |h|^{2}e^{(n-1)\epsilon}H.
\end{equation*}
Using this, we compute the evolution of \eqref{beta} under MCF:
\begin{eqnarray} \label{decreasing}
   \frac{\partial}{\partial \epsilon} \psi_{k}(\epsilon) &=& \int_{\Sigma_{\epsilon}} -\frac{1}{\left( e^{(n-1)\epsilon} H + k^{-1} \right)^{2}} \frac{\partial}{\partial \epsilon} \left( e^{(n-1)\epsilon} H \right) - \frac{1}{e^{(n-1)\epsilon} H + k^{-1}} H^{2} d\sigma \nonumber \\
   &\leq& -\int_{\Sigma_{\epsilon}} \frac{2e^{2(n-1)\epsilon}}{\left( e^{(n-1)\epsilon} H + k^{-1} \right)^{3}} |\nabla H|^{2} d\sigma \leq 0,
\end{eqnarray}
and so \eqref{beta} is monotonically decreasing in $\epsilon$. By the $L^{p}$ convergence $H(x,\epsilon) \rightarrow H(x,0)$ and the bound $\phi_{k}(x) \leq k$, we have that
\begin{equation*}
   \psi_{k}(\epsilon_{j}) \rightarrow  \psi_{k}(0)
\end{equation*}
along a subsequence of times $\epsilon_{j}$ converging to $0$ by the dominated convergence theorem. Combining this with \eqref{decreasing} yields

\begin{equation} \label{regularized_inverse}
    \int_{\Sigma_{\epsilon}} \frac{1}{e^{(n-1)\epsilon} H + k^{-1}} d\sigma \leq  \int_{\Sigma_{0}} \frac{1}{H + k^{-1}} d\sigma, \hspace{1cm} k \in \mathbb{N}, \hspace{1cm} \epsilon \in (0,\epsilon_{0}).
\end{equation}
Now, for a fixed $\epsilon \in (0,\epsilon_{0})$ we take the $k \searrow \infty$ limit of each side of \eqref{regularized_inverse}. By monotone convergence, we get

\begin{equation*}
    \int_{\Sigma_{\epsilon}} \frac{1}{H} d\sigma \leq e^{(n-1)\epsilon} \int_{\Sigma_{0}} \frac{1}{H} d\sigma.
\end{equation*}
Hence

\begin{eqnarray*}
    \limsup_{\epsilon \searrow 0} \int_{\Sigma_{\epsilon}} \frac{1}{H} d\sigma \leq \int_{\Sigma_{0}} \frac{1}{H} d\sigma,
\end{eqnarray*}
and once again by $L^{p}$ convergence of $H_{\epsilon}$ we have that that $H_{\epsilon_{j}}^{-1} \rightarrow H^{-1}$ in $L^{1}$ along a subsequence $\epsilon_{j}$.

\end{proof}
Given this convergence, we may pass inequality \eqref{brendle_inequality} off to $C^{1}$ $\Sigma_{0}$ with bounded, non-negative $H$.

\begin{Theorem}[Heintze-Karcher Inequality for Weak Mean Curvature] \label{brendle_weak}
Let $\Omega_{0} \subset \mathbb{H}^{n}$ be a bounded domain. Suppose that $\partial \Omega_{0}= \Sigma_{0}=X_{0}(\Sigma)$ is $C^{1}$ with bounded and non-negative measurable weak mean curvature. Then 

\begin{equation}
    (n-1) \int_{\Sigma_{0}} \frac{f}{H} d \mu \geq n \int_{\Omega_{0}} f d\Omega,
\end{equation}
where $H$ is the weak mean curvature of $\Sigma_{0}$.
\end{Theorem}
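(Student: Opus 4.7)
The strategy is to apply Brendle's smooth Heintze--Karcher inequality \eqref{brendle_inequality} on the mean-convex MCF approximations $\Sigma_{\epsilon}$ provided by Theorem \ref{mcf_approximation}, and then to take $\epsilon \searrow 0$ using the $L^{1}$ convergence of the reciprocal mean curvatures from Proposition \ref{approx_liminf}. Before invoking the approximation, I would dispose of the degenerate case: since $f = \cosh(r) \geq 1$ on $\mathbb{H}^{n}$, the bound
\begin{equation*}
\int_{\Sigma_{0}} \frac{f}{H} d\sigma \geq \int_{\Sigma_{0}} \frac{1}{H} d\sigma
\end{equation*}
shows that if $H^{-1} \notin L^{1}(\Sigma_{0})$ then the left-hand side is $+\infty$ and the inequality is trivial. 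So I may assume $H^{-1}$ is integrable, which is exactly the hypothesis needed to apply Proposition \ref{approx_liminf}.

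Under this assumption, for each $\epsilon \in (0,\epsilon_{0})$ Theorem \ref{mcf_approximation} produces a smooth hypersurface $\Sigma_{\epsilon} = X_{\epsilon}(\Sigma^{n-1})$ with strictly positive mean curvature $H_{\epsilon}$ bounding a domain $\Omega_{\epsilon} \subset \mathbb{H}^{n}$. Since $H > 0$ pushes the surface inward, we may assume $\Omega_{\epsilon} \subset \Omega_{0}$ with $\Omega_{\epsilon} \nearrow \Omega_{0}$ as $\epsilon \searrow 0$. Applying Brendle's inequality on each smooth, strictly mean-convex $\Sigma_{\epsilon}$ gives
\begin{equation*}
(n-1) \int_{\Sigma_{\epsilon}} \frac{f}{H_{\epsilon}} d\sigma_{\epsilon} \geq n \int_{\Omega_{\epsilon}} f\, d\Omega,
\end{equation*}
and I now select the subsequence $\epsilon_{j} \searrow 0$ furnished by Proposition \ref{approx_liminf}, along which $H_{\epsilon_{j}}^{-1} \to H^{-1}$ strongly in $L^{1}$ (via the parametrization $X_{\epsilon_{j}}$). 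The remaining task is to pass both sides of Brendle's inequality to their limits along this subsequence.

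For the left-hand side, pull back to $\Sigma^{n-1}$ and write the integrand as $(f \circ X_{\epsilon_{j}}) \cdot H_{\epsilon_{j}}^{-1} \cdot \omega_{\epsilon_{j}}$, where $\omega_{\epsilon_{j}}$ is the Jacobian of $X_{\epsilon_{j}}$. By the $C^{1,\alpha}$ convergence $X_{\epsilon_{j}} \to X_{0}$, the factors $f \circ X_{\epsilon_{j}}$ and $\omega_{\epsilon_{j}}$ converge uniformly to $f \circ X_{0}$ and $\omega_{0}$, so combining with the strong $L^{1}$ convergence of $H_{\epsilon_{j}}^{-1}$ yields $\int_{\Sigma_{\epsilon_{j}}} f H_{\epsilon_{j}}^{-1} d\sigma_{\epsilon_{j}} \to \int_{\Sigma_{0}} f H^{-1} d\sigma_{0}$. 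For the right-hand side, the monotone exhaustion $\Omega_{\epsilon_{j}} \nearrow \Omega_{0}$ and local boundedness of $f$ give $\int_{\Omega_{\epsilon_{j}}} f d\Omega \to \int_{\Omega_{0}} f d\Omega$ by monotone convergence. The main obstacle is genuinely contained in Proposition \ref{approx_liminf}: weak $L^{p}$ control of $H_{\epsilon_{j}}$ alone would only furnish a Fatou-type lower bound on $\int_{\Sigma_{\epsilon_{j}}} f/H_{\epsilon_{j}}$, which goes the wrong way for preserving the $\geq$; it is precisely the \emph{strong} $L^{1}$ convergence of $H_{\epsilon_{j}}^{-1}$ that converts Brendle's approximate inequalities into the desired weak inequality on $\Sigma_{0}$.
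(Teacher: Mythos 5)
Your proof is correct and follows essentially the same route as the paper's: dispose of the $H^{-1}\notin L^{1}$ case, apply Brendle's smooth inequality to the MCF approximants from Theorem \ref{mcf_approximation}, and pass to the limit along the subsequence supplied by Proposition \ref{approx_liminf}. Your elaboration of the limit-passing (pullback and Jacobian control for the boundary integral, monotone exhaustion for the bulk integral, and the remark that Fatou alone would give the inequality in the wrong direction) is a careful and accurate unpacking of the paper's terser claim that $C^{0}$ convergence of $\Sigma_{\epsilon_{j}}$ together with strong $L^{1}$ convergence of $H_{\epsilon_{j}}^{-1}$ yields both limits.
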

\begin{proof}
 We assume $H^{-1}$ is integrable, since otherwise the statement is trivial. Let $X: \Sigma \times (0,\epsilon_{0}) \rightarrow \mathbb{H}^{n}$ be the approximating flow from Proposition \ref{mcf_approximation}, and denote by $\Omega_{\epsilon}$ the region bounded by $\Sigma_{\epsilon}$. Given Proposition \ref{approx_liminf}, we consider the subsequence $\epsilon_{j} \searrow 0$ from Proposition \ref{approx_liminf}. By $C^{0}$ convergence of $\Sigma_{\epsilon_{j}}$ and $L^{1}$ convergence of $H_{\epsilon_{j}}^{-1}$, we have

\begin{eqnarray*}
   \lim_{j} \int_{\Omega_{\epsilon_{j}}} f d\Omega &=& \int_{\Omega_{0}} f d\Omega, \\
   \lim_{j} \int_{\Sigma_{\epsilon_{j}}} \frac{f}{H} d\sigma &=& \int_{\Sigma_{0}} \frac{f}{H} d\sigma.
\end{eqnarray*}
Since $\Sigma_{\epsilon}$ is smooth with $H>0$, we apply the inequality \eqref{brendle_inequality} over each $\Omega_{\epsilon_{j}}$ to make the conclusion. 
\end{proof}
\section{The Volumetric Minkowski Inequality}
In this section, we prove the Minkowski inequality \eqref{minkowski} of Brendle-Hung-Wang for outer-minimizing domains of $\mathbb{H}^{n}$, $3 \leq n \leq 7$ using weak IMCF. Huisken introduced this approach in his lecture \cite{H16}, which was pursued by Freire-Schwartz \cite{FS13} in $\mathbb{R}^{n}$ and later Wei \cite{W18} in Schwarzschild space. Throughout this section, we will use the relation

\begin{equation} \label{hess}
    \nabla^{2} f = f g_{\mathbb{H}^{n}}
\end{equation}
for the Hessian of $f=\cosh\left(r \right)$ in $\mathbb{H}^{n}$. We also define the \textit{support function} $\phi: \Sigma_{0} \rightarrow \mathbb{R}$ of a $C^{1}$ hypersurface as
\begin{eqnarray} \label{support}
    \phi(x) &=& \langle \nabla f, \nu(x) \rangle.
\end{eqnarray}
\eqref{hess} implies for a smooth hypersurface $\Sigma_{0}^{n-1} \subset \mathbb{H}^{n}$ that

\begin{equation} \label{surface_potential}
    H \phi + \Delta_{\Sigma_{0}} f = (n-1) f,
\end{equation}
and for the domain $\Omega_{0}$ enclosed by $\Sigma_{0}$ that

\begin{equation} \label{div}
    \int_{\Sigma_{0}} \phi d\sigma = \int_{\Omega_{0}} \Delta_{\mathbb{H}^{n}} fd \mu = n \int_{\Omega_{0}} f d\Omega
\end{equation}
via the Divergence Theorem.

In \cite{BHW12}, Brendle-Hung-Wang considered the integral quantity

\begin{equation} \label{Q}
    Q(t) =  |\Sigma_{t}|^{\frac{2-n}{n-1}} \left( \int_{\Sigma_{t}} fH d\sigma - n(n-1) \int_{\Omega_{t}} f d\Omega  \right)
\end{equation}
under smooth IMCF in $\mathbb{H}^{n}$. To determine the evolution of $Q(t)$, recall the variation formula for mean curvature with $\text{Ric}(\nu,\nu)=-(n-1)$,

\begin{equation*}
    \frac{\partial}{\partial t} H = - \Delta_{\Sigma_{t}} \frac{1}{H} + \left((n-1) - |h|^{2}\right) \frac{1}{H}. 
\end{equation*}
Now, we compute
\begin{eqnarray*}
    \frac{\partial}{\partial t} Q(t) &=& \frac{2-n}{n-1} |\Sigma_{t}|^{\frac{2-n}{n-1}} \left( \int_{\Sigma_{t}} fH d\sigma - n(n-1) \int_{\Omega_{t}} f d\Omega  \right) \\
    & &+ |\Sigma_{t}|^{\frac{2-n}{n-1}} \left( \int_{\Sigma_{t}} \left( \frac{\partial}{\partial t}f\right)H + f \left(\frac{\partial}{\partial t} H \right) + fH d\sigma - n(n-1) \int_{\Sigma_{t}} \frac{f}{H} d\sigma  \right) \\
    &= & \frac{2-n}{n-1} |\Sigma_{t}|^{\frac{2-n}{n-1}} \left( \int_{\Sigma_{t}} fH d\sigma - n(n-1) \int_{\Omega_{t}} f d\Omega  \right) \\
    & & + |\Sigma_{t}|^{\frac{2-n}{n-1}} \left( \int_{\Sigma_{t}} \phi - f \Delta_{\Sigma_{t}} \frac{1}{H} - \frac{|h|^{2}}{H} f + (n-1) \frac{f}{H} + f H d\sigma - n(n-1) \int_{\Sigma_{t}} \frac{f}{H} d\sigma \right) \\
    &=& \frac{2-n}{n-1} |\Sigma_{t}|^{\frac{2-n}{n-1}} \left( \int_{\Sigma_{t}} fH d\sigma - n(n-1) \int_{\Omega_{t}} f d\Omega  \right) \\
    & & + |\Sigma_{t}|^{\frac{2-n}{n-1}} \left( \int_{\Sigma_{t}} 2 \phi - ( H\phi + \Delta_{\Sigma_{t}} f) \frac{1}{H}  + \frac{n-2}{n-1} f H - \frac{|\mathring{h}|^{2}}{H} f - (n-1)^{2} \frac{f}{H} d\sigma \right)\\
    &=& |\Sigma_{t}|^{\frac{2-n}{n-1}} \left( -\int_{\Sigma_{t}} \frac{|\mathring{h}|^{2}}{H}f d\sigma + n \left(-(n-1) \int_{\Sigma_{t}} \frac{f}{H} d\sigma + n \int_{\Omega_{t}} f d\Omega \right)  \right) \\
    &\leq& -|\Sigma_{t}|^{\frac{2-n}{n-1}} \int_{\Sigma_{t}} \frac{|\mathring{h}|^{2}}{H}f d\sigma \leq 0.
\end{eqnarray*}
Here, the third equation follows from \eqref{div} and \eqref{surface_potential} and the inequality follows from the Heintze-Karcher inequality \eqref{brendle_inequality}. The lower bound $\liminf_{t \nearrow \infty} Q(t) \geq (n-1) w_{n-1}^{\frac{1}{n-1}}$ arises from a Sobolev inequality, see Section 5 of \cite{BHW12}. 

To extend \eqref{minkowski_area}, we must show that $Q(t)$ remains monotone under weak IMCF. We remark that geometric flows have recently been used to prove families of inequalities in $\mathbb{H}^{n}$ related to the one in \cite{BHW12}-- see for example \cite{LWX14}, \cite{AW18}, \cite{LWX14}, \cite{HLW20}, \cite{KWWV21}, and \cite{WWZ23}-- but the flows used for these are not known to admit variational formulations, and so a weak flow approach to these inequalities is currently out of reach. In order to show monotonicity of $Q(t)$ under weak IMCF, we need to estimate the bulk term and the surface integral term in \eqref{Q} separately. First we determine the growth of $\int_{\Omega_{t}} f d\Omega$ along the proper weak solution. This may be accomplished using the robust version of the co-area formula from \cite{S18}.




\begin{Proposition}[Growth of the Bulk Term] \label{bulk}
Let $u \in C^{0,1}_{\text{loc}}(\mathbb{H}^{n})$, $3 \leq n \leq 7$, be a proper weak IMCF with initial condition $\Omega_{0}$. Then for every $0 \leq t_{1} < t_{2}$, we have for the potential function $f(r)=\cosh(r)$ that

\begin{equation} \label{volume_growth}
   \int_{\Omega_{t_{2}}} f d\Omega  \geq \int_{\Omega_{t_{1}}} f d\Omega + \frac{n}{n-1} \int_{t_{1}}^{t_{2}} \int_{\Omega_{s}} f d\Omega ds.
\end{equation}
\end{Proposition}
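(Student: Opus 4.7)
The plan is to combine three ingredients: the co-area formula for Lipschitz functions, the identification $|\nabla u| = H$ on a.e. level set, and the weak Heintze--Karcher inequality (Theorem \ref{brendle_weak}).

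First I would apply the co-area formula to $u$ on the annular region $\Omega_{t_2} \setminus \Omega_{t_1}$ with the integrand $f$. Since $u \in C^{0,1}_{\loc}(\mathbb{H}^n)$ and $f$ is smooth and non-negative, this yields
\begin{equation*}
\int_{\Omega_{t_2} \setminus \Omega_{t_1}} f \, d\Omega \;=\; \int_{t_1}^{t_2} \!\! \int_{\Sigma_s} \frac{f}{|\nabla u|} \, d\mathcal{H}^{n-1} \, ds.
\end{equation*}
By Theorem \ref{imcf_properties}(iii), for a.e. $s \in (t_1,t_2)$ the weak mean curvature of $\Sigma_s$ coincides with $|\nabla u|$ almost everywhere on $\Sigma_s$, so the inner integral may be replaced by $\int_{\Sigma_s} f/H \, d\sigma$.

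Next, for $3 \leq n \leq 7$, Theorem \ref{imcf_properties}(i)--(iii) guarantees that $\Sigma_s = \partial \{u < s\}$ is a $C^{1,\alpha}$ hypersurface with bounded, non-negative weak mean curvature for every $s > 0$. Consequently the weak Heintze--Karcher inequality of Theorem \ref{brendle_weak} applies to each $\Sigma_s$, giving
\begin{equation*}
\int_{\Sigma_s} \frac{f}{H} \, d\sigma \;\geq\; \frac{n}{n-1} \int_{\Omega_s} f \, d\Omega \textQq{for a.e.} s \in (t_1, t_2).
\end{equation*}
(If $H^{-1}$ fails to be integrable on some $\Sigma_s$, the inequality is vacuous; the co-area identity above still makes sense since $\int_{\Omega_{t_2}} f \, d\Omega < \infty$ forces the inner integral to be finite for a.e. $s$.)

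Combining these two steps and invoking Fubini--Tonelli (all integrands are non-negative, so no integrability issue arises) gives
\begin{equation*}
\int_{\Omega_{t_2}} f \, d\Omega - \int_{\Omega_{t_1}} f \, d\Omega \;\geq\; \frac{n}{n-1} \int_{t_1}^{t_2} \int_{\Omega_s} f \, d\Omega \, ds,
\end{equation*}
which is the claimed inequality. The main conceptual step is having the weak Heintze--Karcher inequality available; once Theorem \ref{brendle_weak} is in hand, the rest is a direct application of the co-area formula and the identity $H = |\nabla u|$ on level sets, both of which are standard features of the Huisken--Ilmanen weak formulation. No delicate handling of the jump times $\{t : \Omega_t \neq \Omega_t^+\}$ is required since these form a measure-zero set and therefore do not contribute to the $s$-integral.
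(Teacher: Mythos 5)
Your proof follows the same route as the paper's: co-area formula on the annulus $\Omega_{t_2}\setminus\Omega_{t_1}$, substitution $|\nabla u|=H$ on almost every level set via Theorem \ref{imcf_properties}(iii), then the weak Heintze--Karcher inequality of Theorem \ref{brendle_weak}. However, your first display contains a genuine error: you write the co-area step as an \emph{equality},
\begin{equation*}
\int_{\Omega_{t_2}\setminus\Omega_{t_1}} f\,d\Omega \;=\; \int_{t_1}^{t_2}\int_{\Sigma_s}\frac{f}{|\nabla u|}\,d\mathcal{H}^{n-1}\,ds,
\end{equation*}
and this is false in general for a weak IMCF. The co-area formula for a locally Lipschitz $u$ reads $\int g|\nabla u|\,d\mathcal{H}^n=\int_{\mathbb R}\int_{\{u=t\}}g\,d\mathcal{H}^{n-1}\,dt$; to bring $f$ alone to the left one must set $g=f/|\nabla u|$ where $\nabla u\neq 0$ and $g=0$ on the critical set, and then the left side is only $\int_{\{\nabla u\neq 0\}}f$. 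For a weak solution the critical set $\{\nabla u=0\}$ can have \emph{positive} $\mathcal{H}^n$-measure: at a jump time $t$, the plateau $\{u=t\}=\Omega_t^+\setminus\Omega_t$ is an open region on which $u$ is constant, so $\nabla u\equiv 0$ there. These jump regions contribute to $\int_{\Omega_{t_2}\setminus\Omega_{t_1}}f$ but not to the right side, which is exactly why the paper introduces the function $h$ (equal to $f/|\nabla u|$ off the critical set and $0$ on it) and states only the inequality
\begin{equation*}
\int_{\Omega_{t_2}}f\,d\Omega-\int_{\Omega_{t_1}}f\,d\Omega\;\geq\;\int_{\Omega_{t_2}\setminus\Omega_{t_1}}h\,|\nabla u|\,d\Omega\;=\;\int_{t_1}^{t_2}\int_{\Sigma_s}\frac{f}{H}\,d\sigma\,ds.
\end{equation*}
The paper even devotes a Remark to precisely this pitfall. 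Your closing sentence --- that jump times ``form a measure-zero set and therefore do not contribute to the $s$-integral'' --- is correct as far as the $s$-integral is concerned, but misses the point: the jump \emph{regions} have positive $\mathcal{H}^n$-measure on the \emph{volume} side, and that is where the equality breaks. Fortunately the direction of the inequality is the one you need, so the argument is salvageable; but as written the chain of equalities is not justified and should be replaced by the one-sided estimate above.
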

\begin{proof}
Given that $u \in C^{0,1}_{\text{loc}}(\mathbb{H}^{n} \setminus \Omega_{0})$, $\nabla u(x)$ is defined for 
 $\mathcal{H}^{n}$ a.e. $x \in \mathbb{H}^{n} \setminus \Omega_{0}$. From this, we define the $\mathcal{H}^{n}$-measurable function $h: \mathbb{H}^{n} \setminus \Omega_{0} \rightarrow \mathbb{R}$ by

\begin{equation*}
    h(x) = \begin{cases} 0 & \nabla u(x) =0, \\
   f(x)|\nabla u|^{-1} & \nabla u(x) \neq 0.
   \end{cases} 
\end{equation*}
Now, we apply the co-area formula for Lipschitz functions from Theorem 2.7.3 and Remark 2.7.4 in \cite{S18} and obtain

\begin{eqnarray*}
    \int_{\Omega_{t_{2}}} f d\Omega - \int_{\Omega_{t_{1}}} f d\Omega  \geq \int_{\Omega_{t_{2}} \setminus \Omega_{t_{1}}} h |\nabla u| d\Omega = \int_{t_{1}}^{t_{2}} \int_{\Sigma_{s}} h d\sigma ds = \int_{t_{1}}^{t_{2}} \int_{\Sigma_{s}} \frac{f}{H} d\sigma ds.
\end{eqnarray*}
Here, we used the fact that $\Sigma_{t} = \Sigma^{+}_{t} = \{ u = t\}$ for a.e. $t \in (0,\infty)$. Likewise, the last equation follows from the fact that for a.e. $t \in (0,\infty)$ we have $|\nabla u(x)| = H(x)$ for $\mathcal{H}^{n-1}$ a.e. $x \in \Sigma_{t}$. The conclusion follows from Theorem \ref{brendle_weak}.
\end{proof}

Note that Proposition \ref{bulk} is equivalent to

\begin{equation} \label{volume_growth2}
    \int_{\Omega_{t_{2}}} f d\Omega  \geq e^{\frac{n}{n-1} (t_{2}-t_{1})} \int_{\Omega_{t_{1}}} f d\Omega
\end{equation}
for $t_{2} > t_{1}$ by Gronwall's lemma-- we will use this explicit lower bound on the bulk term in Section 6.

Next, we estimate the surface integral term of \eqref{Q}, and for this it is more convenient to work with the smooth approximating functions $u_{\epsilon}$ from \eqref{u_epsilon}. Our proof here is similar to the one in \cite{W18}, Section 4, but with a different underlying potential equation. We first need an inequality for the regular level sets of $u^{i}=u_{\epsilon_{i}}$.
\begin{Lemma}
Let $\Omega_{0} \subset \mathbb{H}^{n}$ be a smooth bounded domain, and let $u \in  C^{\infty}(\mathbb{H}^{n} \setminus \overline{\Omega}_{0})$ a proper smooth function with $u|_{\Sigma_{0}} \equiv 0$ and $\mathcal{H}^{n}\left(\text{crit}( u ) \right) = 0$. Define the domain $\Omega_{t} = \{ u \leq t \} \subset \mathbb{H}^{n}$, and let $\Psi: (0,t) \rightarrow \mathbb{R}^{+}$ be a compactly-supported Lipschitz function. Then for the function
\begin{equation*}
    \psi (x) = (\Psi\circ u)(x), \hspace{2cm} x \in \Omega_{t},
\end{equation*}
 we have the integral identity

 \begin{equation} \label{integral_identity}
     - \int_{\Omega_{t}} f H \frac{\partial \psi }{\partial \nu} d\Omega = \int_{\Omega_{t}} \psi [2H \phi_{s} + f(H^{2} - |h|^{2})] d \Omega,
 \end{equation}
 where $\nu$, $H$, $h$, and $\phi$ are the unit normal, mean curvature, second fundamental form, and support function of the regular level sets $\Sigma_{s}=\{ u = s\}$ of $u$.
\end{Lemma}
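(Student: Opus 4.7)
The plan is to reduce the identity to a pointwise evolution equation along the level sets $\Sigma_s$ and verify it by standard variation calculus in $\mathbb{H}^n$. Because $\nabla \psi = \Psi'(u)\nabla u$ and $\nu = \nabla u/|\nabla u|$ wherever $|\nabla u| \neq 0$, the normal derivative satisfies $\partial \psi/\partial \nu = \Psi'(u)|\nabla u|$, and the hypothesis $\mathcal{H}^n(\mathrm{crit}(u))=0$ ensures this holds $\mathcal{H}^n$-almost everywhere on $\Omega_t$. The Federer co-area formula then rewrites the left-hand side of \eqref{integral_identity} as
$$-\int_{0}^{t} \Psi'(s)\left(\int_{\Sigma_{s}} fH\, d\sigma\right) ds,$$
and, after dividing by $|\nabla u|$, the right-hand side as
$$\int_{0}^{t} \Psi(s) \int_{\Sigma_{s}} \frac{2H\phi_{s} + f(H^{2}-|h|^{2})}{|\nabla u|}\, d\sigma\, ds.$$
Since $\Psi$ is compactly supported in $(0,t)$, integrating the first expression by parts in $s$ kills the boundary terms and reduces the lemma to the pointwise identity
$$\frac{d}{ds}\int_{\Sigma_{s}} fH\, d\sigma = \int_{\Sigma_{s}} \frac{2H\phi_{s} + f(H^{2}-|h|^{2})}{|\nabla u|}\, d\sigma$$
for a.e.\ $s \in (0,t)$.

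Next I would verify this evolution identity by viewing $\Sigma_s$ as evolving with outward normal velocity $V = 1/|\nabla u|$, which is legitimate because $\Sigma_s$ is a smooth embedded hypersurface for a.e.\ $s$ by Sard's theorem applied to the $C^\infty$ function $u$. The standard variation formulas give
$$\frac{d}{ds}d\sigma = HV\, d\sigma, \qquad \frac{d}{ds} f = V\phi_s, \qquad \frac{\partial H}{\partial s} = -\Delta_{\Sigma_s} V - V\bigl(|h|^2 + \Ric(\nu,\nu)\bigr),$$
and in $\mathbb{H}^n$ one has $\Ric(\nu,\nu) = -(n-1)$. Combining these into $\frac{d}{ds}(fH\, d\sigma)$, integrating over $\Sigma_s$, and using the divergence theorem on the closed hypersurface $\Sigma_s$ to trade $\int f\,\Delta_{\Sigma_s} V$ for $\int V\,\Delta_{\Sigma_s} f$, I then substitute $\Delta_{\Sigma_s} f = (n-1)f - H\phi_s$ via \eqref{surface_potential}. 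The $(n-1)f$ contributions cancel and two copies of $VH\phi_s$ survive, leaving the integrand $V[2H\phi_s + f(H^2-|h|^2)]$, which is exactly what is needed.

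The calculation is essentially bookkeeping, so the only mildly delicate point is the regularity handling: the formula $\partial\psi/\partial\nu = \Psi'(u)|\nabla u|$ and the pointwise variation formulas are valid only on regular level sets. The measure-theoretic hypothesis $\mathcal{H}^n(\mathrm{crit}(u))=0$ guarantees that the critical locus contributes nothing to either bulk integral via co-area, while Sard's theorem leaves at most a measure-zero set of non-regular values of $s$, which is harmless once the identity is cast in integrated form over $s \in (0,t)$ against the arbitrary test function $\Psi$.
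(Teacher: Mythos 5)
Your calculation reproduces the paper's core algebra (first variation of $H$ with normal velocity $V=|\nabla u|^{-1}$, integration by parts on the closed slice $\Sigma_s$, and the potential identity $\Delta_{\Sigma_s}f=(n-1)f-H\phi_s$), but it is organized along a genuinely different route. The paper never reduces to a one-dimensional evolution equation: it works entirely with bulk integrals over $\Omega_t$, first using co-area to establish the intermediate identity $\int_{\Omega_t} f\psi\,\frac{\partial H}{\partial\nu}\,d\Omega = \int_{\Omega_t}\psi(H\phi_s-f|h|^2)\,d\Omega$, then computing $\operatorname{div}(f\psi H\nu)$ and applying the divergence theorem over $\Omega_t$ (using that $\psi$ vanishes on $\partial\Omega_t$). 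You instead pass via co-area to the scalar function $G(s)=\int_{\Sigma_s}fH\,d\sigma$ and reduce the lemma to the a.e.\ evolution identity $G'(s)=K(s)$ for $K(s)=\int_{\Sigma_s}|\nabla u|^{-1}\bigl[2H\phi_s+f(H^2-|h|^2)\bigr]d\sigma$, which your variation formulas do verify correctly at every regular value of $s$.

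The step that is not discharged is the reduction itself. Sard's theorem and the variation formulas give $G'(s)=K(s)$ for a.e.\ $s$, but the identity you need,
\begin{equation*}
-\int_0^t\Psi'(s)\,G(s)\,ds=\int_0^t\Psi(s)\,K(s)\,ds
\quad\text{for all compactly supported Lipschitz }\Psi,
\end{equation*}
asserts that the \emph{distributional} derivative of $G$ equals $K$. That is strictly stronger than a.e.\ pointwise differentiability: you additionally need $G$ to be absolutely continuous on $(0,t)$, ruling out a Cantor-type singular part concentrated on the measure-zero set of critical values of $u$. Nothing in the stated hypotheses makes this automatic, and you do not prove it. The paper's decision to carry out the computation at the level of bulk integrals is precisely what sidesteps this regularity question: it only ever uses the co-area formula as a change of variables (where $\mathcal{H}^n(\mathrm{crit}(u))=0$ suffices) and the bulk divergence theorem, never requiring differentiability of $s\mapsto\int_{\Sigma_s}fH\,d\sigma$. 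To close your version you would need either to prove absolute continuity of $G$ directly (for instance by representing $G(s_2)-G(s_1)$ as a bulk integral of $\operatorname{div}(fH\nu)$ over $\Omega_{s_2}\setminus\Omega_{s_1}$ and controlling the contribution near $\mathrm{crit}(u)$), or to repackage the computation in bulk form as the paper does.
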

\begin{proof}
We will initially assume $\Psi\in C^{1}_{0}(0,t)$. By Sard's theorem, a.e. level set $\Sigma_{s}= \{ u = s \}$, $s \in [0,t)$, is regular. For a regular level set $\Sigma_{s}$, we consider the variation field

\begin{equation*}
    \frac{\nabla u}{|\nabla u|^{2}} = |\nabla u|^{-1} \nu.
\end{equation*}
Taking the first variation formula for mean curvature with $\text{Ric}(\nu,\nu)=-(n-1)$, we find

\begin{equation*}
    -|\nabla u|^{-1} \frac{\partial H}{\partial \nu} = \Delta_{\Sigma_{s}} |\nabla u|^{-1} + (|h|^{2} -(n-1)) |\nabla u|^{-1}.
\end{equation*}
Multiplying by $f$ and integrating yields

\begin{eqnarray} \label{level_set_u}
    \int_{\Sigma_{s}} -f |\nabla u|^{-1} \frac{\partial H}{\partial \nu} d\sigma &=& \int_{\Sigma_{s}} f \Delta_{\Sigma_{s}} |\nabla u|^{-1} + (|h|^{2} -(n-1)) f |\nabla u|^{-1} d\sigma \nonumber \\
    &=& \int_{\Sigma_{s}} (\Delta_{\Sigma_{s}} f -(n-1)f +|h|^{2}f) |\nabla u|^{-1} d\sigma \\
    &=& \int_{\Sigma_{s}} - H \phi_{s} |\nabla u|^{-1} + |h|^{2}f |\nabla u|^{-1} d\sigma, \nonumber 
\end{eqnarray}
where we used the potential equation \eqref{surface_potential}. Since $\Sigma_{s}= \{ u = s \}$ is regular for a.e. $s \in (0,t)$, we apply \eqref{level_set_u} to the co-area formula

\begin{eqnarray} \label{H_deriv_identity}
    \int_{\Omega_{t}} f \psi \frac{\partial H}{\partial \nu} d\Omega &=& \int_{0}^{t} \int_{\Sigma_{s}} f \psi \frac{\partial H}{\partial \nu} |\nabla u|^{-1} d\sigma ds = \int_{0}^{t} \Psi(s) \int_{\Sigma_{s}} f \frac{\partial H}{\partial \nu} |\nabla u|^{-1} d\sigma ds \nonumber \\
    &=& \int_{0}^{t} \Psi(s) \int_{\Sigma_{s}} H \phi_{s} |\nabla u|^{-1} - f|h|^{2} |\nabla u|^{-1} d\sigma ds \\
    &=& \int_{\Omega_{t}} \psi (H \phi_{s} - f|h|^{2}) d\Omega, \nonumber
\end{eqnarray}
Now,

\begin{eqnarray} \label{divergence_formula}
    \text{div}( f \psi  H \nu) &=& \psi  H \phi + f H \frac{\partial \psi }{\partial \nu} + f \psi \frac{\partial H}{\partial \nu} + f\psi  H \text{div} \nu
\end{eqnarray}
Because $\Psi$ is compactly supported, $\psi $ vanishes over $\partial \Omega_{t}= \{ u = t\} \cup \partial M$. Therefore, the integral of the left-hand side of \eqref{divergence_formula} vanishes, and we are left with

\begin{eqnarray} \label{final_identity}
    -\int_{\Omega_{t}} fH \frac{\partial \psi }{\partial \nu} d\Omega &=& \int_{\Omega_{t}} \psi H \phi_{s} + f \psi \frac{\partial H}{\partial \nu} + f\psi H \text{div} \nu d\Omega \\
    &=& \int_{\Omega_{t}} \psi ( 2H \phi_{s} - f|h|^{2}) d\Omega + \int_{\Omega_{t}} f\psi H \text{div} \nu d\Omega. \nonumber
\end{eqnarray}
For the divergence term, we once again see from the co-area formula
\begin{eqnarray*}
    \int_{\Omega_{t}} f\psi H \text{div} \nu d\Omega &=& \int_{0}^{t} \int_{\Sigma_{s}} f \psi H \text{div}(\nu) |\nabla u|^{-1} d\sigma ds \\
    &=& \int_{0}^{t} \int_{\Sigma_{s}} f \psi H \text{div}_{\Sigma_{s}}(\nu) |\nabla u|^{-1} d\sigma ds \\
    &=& \int_{0}^{t} \int_{\Sigma_{s}} f \psi H^{2} |\nabla u|^{-1} d\sigma ds \\
    &=& \int_{\Omega_{t}} f\psi H^{2} d\Omega.
\end{eqnarray*}
Substituting this into \eqref{final_identity} yields \eqref{integral_identity}.
\end{proof}
\begin{Remark}
The formula

\begin{equation*}
    \int_{\Omega} f d\Omega = \int_{0}^{t} \int_{\{ u = s\}} \frac{f}{|\nabla u|} d\sigma ds
\end{equation*}
for $f \in C^{\infty}_{+}(\mathbb{H}^{n})$ and locally Lipschitz $u$ is sometimes applied erroneously in the literature. The above formula is always true \textit{as an inequality}, such as in the proof of Proposition \ref{bulk}. For exact identity, the critical set of $u$ must have measure $0$. This is indeed the case for $u^{i}$ solving \eqref{u_epsilon} (check the Laplacian at critical points). This is clarified in \cite{CF17}.
\end{Remark}

\eqref{integral_identity} implies an integral inequality for the level sets of a smooth, proper $u$. We may pass this inequality to level sets of a proper weak IMCF, which yields a growth estimate for the first term in \eqref{Q}.
\begin{Lemma}[Growth of the Total Mean Curvature] 
Let $u \in C^{0,1}_{\text{loc}}(\mathbb{H}^{n})$ be a proper weak IMCF with initial condition $\Omega_{0}$. Then for any $0 < t_{1} < t_{2}$ we have
\begin{equation} \label{key_inequality}
    \int_{\Sigma_{t_{2}}} fH d\sigma \leq \int_{\Sigma_{t_{1}}} fH d\sigma + \int_{t_{1}}^{t_{2}} \int_{\Sigma_{s}} \left( \frac{n-2}{n-1} fH + 2 \phi_{s} \right) d\sigma ds .
\end{equation}
\end{Lemma}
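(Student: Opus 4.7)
The plan is to apply the integral identity \eqref{integral_identity} from the preceding lemma to the elliptic-regularization approximations $u^{\epsilon_i}$ of \eqref{u_epsilon}, extract a pre-weak version of \eqref{key_inequality} at the level of the smooth $u^{\epsilon_i}$, and then pass $\epsilon_i\searrow 0$ using Theorem \ref{approximate} together with Theorem \ref{imcf_properties}(iii).

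Fix $t_1<t_2$ and choose $T$ slightly larger than $t_2$. Let $\Psi_\delta\in\mathrm{Lip}_c(0,T)$ be equal to $1$ on $[t_1,t_2]$ and linear on each of the short transition intervals of length $\delta$. Set $\psi=\Psi_\delta\circ u^{\epsilon_i}$; since $u^{\epsilon_i}$ is smooth and has a.e.\ regular level sets (by $C^1$ Sard and the non-degeneracy of $\nabla^2 u^{\epsilon_i}$ at critical points noted just before Theorem \ref{approximate}), the hypotheses of the preceding lemma are met. A direct computation gives $\partial\psi/\partial\nu_{\epsilon_i}=\Psi_\delta'(u^{\epsilon_i})|\nabla u^{\epsilon_i}|$, and the Newton inequality $|h_{\epsilon_i}|^2\ge H_{\epsilon_i}^{\,2}/(n-1)$ converts \eqref{integral_identity} into the inequality
\begin{equation*}
-\int_{\Omega_T^{\epsilon_i}} fH_{\epsilon_i}\,\Psi_\delta'(u^{\epsilon_i})|\nabla u^{\epsilon_i}|\,d\Omega
\;\le\;\int_{\Omega_T^{\epsilon_i}} \Psi_\delta(u^{\epsilon_i})\Big[\,2H_{\epsilon_i}\phi_s^{\epsilon_i}+\tfrac{n-2}{n-1}\,fH_{\epsilon_i}^{\,2}\,\Big]\,d\Omega.
\end{equation*}
The co-area formula, valid here as an identity, collapses the LHS to $-\int_0^T\Psi_\delta'(s)\big(\int_{\Sigma_s^{\epsilon_i}}fH_{\epsilon_i}\,d\sigma\big)\,ds$, which after integrating by parts and letting $\delta\to 0$ yields $\int_{\Sigma_{t_2}^{\epsilon_i}}fH_{\epsilon_i}\,d\sigma-\int_{\Sigma_{t_1}^{\epsilon_i}}fH_{\epsilon_i}\,d\sigma$ for a.e.\ choice of $t_1,t_2$. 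The RHS in the same limit becomes the bulk integral
\begin{equation*}
\int_{\Omega_{t_2}^{\epsilon_i}\setminus\Omega_{t_1}^{\epsilon_i}}\Big[\,2H_{\epsilon_i}\phi_s^{\epsilon_i}+\tfrac{n-2}{n-1}\,fH_{\epsilon_i}^{\,2}\,\Big]\,d\Omega.
\end{equation*}

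Now pass $\epsilon_i\searrow 0$ along the subsequence of Theorem \ref{approximate}. The two LHS surface integrals converge to $\int_{\Sigma_{t_j}}fH\,d\sigma$ for a.e.\ $t_j$, by the $L^2_{\mathrm{loc}}$ convergence $H_{\epsilon_i}\to H$ on level sets combined with the $C^{1,\alpha}$ convergence $\Sigma_{t_j}^{\epsilon_i}\to\Sigma_{t_j}$. The $fH_{\epsilon_i}^{\,2}$ bulk integral converges directly by the second displayed convergence in Theorem \ref{approximate}. For the limiting identity of the $H_{\epsilon_i}\phi_s^{\epsilon_i}$ term, split
\begin{equation*}
\int H_{\epsilon_i}\phi_s^{\epsilon_i}\,d\Omega
=\int H_{\epsilon_i}\phi_s\,d\Omega+\int H_{\epsilon_i}\big(\phi_s^{\epsilon_i}-\phi_s\big)\,d\Omega,
\end{equation*}
where $\phi_s(x):=\langle\nabla f(x),\nabla u(x)/|\nabla u(x)|\rangle$ is defined a.e.\ on the band (with $H=|\nabla u|$ and a measure-zero critical set). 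The first summand converges to $\int H\phi_s\,d\Omega$ by Theorem \ref{approximate} applied to the fixed bounded measurable $\phi_s$ (after a standard mollification and truncation to an admissible test function). The second is controlled by Cauchy-Schwarz and by the a.e.\ convergence $\nabla u^{\epsilon_i}/|\nabla u^{\epsilon_i}|\to\nabla u/|\nabla u|$ on a.e.\ level set (inherited from the $C^{1,\alpha}$ convergence of level sets), producing $L^2$ convergence of $\phi_s^{\epsilon_i}-\phi_s$ on the band via co-area, against the uniform $L^2$ bound for $H_{\epsilon_i}$.

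Finally, convert the resulting bulk inequality back to level-set form by co-area using $H=|\nabla u|$ on $\Sigma_s$ for a.e.\ $s$ (Theorem \ref{imcf_properties}(iii)), so that $d\Omega=H^{-1}d\sigma\,ds$ in the band; this turns $\int H^2 f\,d\Omega$ into $\int_{t_1}^{t_2}\!\int_{\Sigma_s}fH\,d\sigma\,ds$ and $\int 2H\phi_s\,d\Omega$ into $\int_{t_1}^{t_2}\!\int_{\Sigma_s}2\phi_s\,d\sigma\,ds$, delivering \eqref{key_inequality}. The monotonicity in $t$ of both sides (the LHS in particular uses the jump structure described after Theorem \ref{imcf_properties}) upgrades the a.e.\ statement to every $t_1<t_2$. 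The principal technical obstacle is the second paragraph: the support function $\phi_s^{\epsilon_i}$ is $\epsilon_i$-dependent and Theorem \ref{approximate} only provides convergence of $H_{\epsilon_i}$ against $\epsilon$-independent test functions, so the splitting argument and the level-set $C^{1,\alpha}$ convergence of $\Sigma_s^{\epsilon_i}$ must be used carefully to close the limit.
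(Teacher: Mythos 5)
Your proposal follows essentially the same strategy as the paper: feed the elliptically regularized solutions $u^{\epsilon_i}$ into the preceding integral identity \eqref{integral_identity}, discard the traceless second fundamental form using Newton's inequality, pass $\epsilon_i\searrow 0$ via Theorem \ref{approximate}, and collapse the bulk inequality to the level-set form \eqref{key_inequality} by co-area. There are two genuine points of divergence from the paper's argument, one neutral and one where you fall short.

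First, you swap the order of limits. The paper sends $\epsilon_i\to 0$ \emph{first}, obtaining the weak bulk inequality \eqref{weak_imcf_inequality} for the limiting $u$ with $\psi=\Psi\circ u$ still smooth in $s$, and \emph{then} introduces the piecewise-linear $\Psi_\delta$ and lets $\delta\to 0$. You instead let $\delta\to 0$ at fixed $\epsilon_i$, which puts a sharp surface-integral difference on the left \emph{before} the $\epsilon$-limit and forces you to track a.e.\ conditions in $t_1,t_2$ through the $\epsilon$-passage. This is workable (countable unions of null sets are null, and Theorem \ref{approximate} already produces a.e.-in-$t$ convergences), but it is more fragile; the paper's ordering avoids it entirely. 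On the other hand, your splitting of $\int H_{\epsilon_i}\phi_s^{\epsilon_i}\,d\Omega$ is an honest attempt to fill in the convergence of the $\epsilon$-dependent support functions, a step the paper passes over with a bare appeal to \eqref{bulk_convergence} (which, as stated, is for $\epsilon$-independent test functions); you are right to flag this as the technical crux, and the Cauchy–Schwarz argument against the uniform $L^2$ bound on $H_{\epsilon_i}$ is the right tool.

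Second, your final upgrade from ``a.e.\ $t_1<t_2$'' to ``every $t_1<t_2$'' is not correct as stated. Neither side of \eqref{key_inequality} is monotone in $t$: the left-hand side is a \emph{difference} of total weighted mean curvatures, which can move in either direction, and the right-hand integrand has the support function $\phi_s$ which need not have a sign. The paper instead approximates a general $t_j$ by good times $t_j^{(k)}\nearrow t_j$ and passes to the limit using the $C^{1,\alpha}$ level-set convergence of Theorem \ref{imcf_properties}(ii) together with the $L^1$ convergence of the weak mean curvature (Eq.~(1.13) of \cite{HI99}); for $t_1=0$ one approaches from above and uses the jump inequality $\int_{\Sigma_0}fH\,d\sigma\ge\int_{\Sigma_0^+}fH\,d\sigma$. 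Your sentence gesturing at ``the jump structure'' is pointing in the right direction, but the substance — continuity of $\int_{\Sigma_t}fH\,d\sigma$ from the left (resp.\ the correct sign of the jump at $t_1=0$), obtained from the cited convergences — is the step that actually closes the lemma, and it is missing from your write-up.
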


\begin{proof}
For the approximating sequence $u_{\epsilon_{i}}=u^{i}$ in Theorem \ref{approximate}, we have by \eqref{integral_identity} and the inequality $H^{2} \leq (n-1) |h|^{2}$ that

\begin{equation*}
    -\int_{\Omega^{i}_{t}} fH_{i} \frac{\partial \psi_{i}}{\partial \nu_{i}} d\Omega \leq \int_{\Omega^{i}_{t}} \psi_{i} \left( 2 H_{i} \phi^{i}_{s} + \frac{n-2}{n-1} H_{i}^{2} \right) d\Omega.
\end{equation*}
Thus the convergence \eqref{bulk_convergence} implies

\begin{equation} \label{weak_imcf_inequality}
    -\int_{\Omega_{t}} fH \frac{\partial \psi }{\partial \nu} d\Omega \leq \int_{\Omega_{t}} \psi \left( 2H \phi_{s} + \frac{n-2}{n-1} fH^{2} \right) d\Omega,
\end{equation}
where $H$ and $\phi_{s}$ are the weak mean curvature and support function of the weak flow surfaces $\Sigma_{t} = \partial \{ u < t \}$. Applying \eqref{weak_imcf_inequality} along with the co-area formula, we find for any $\Psi\in C^{0,1}_{0}((0,t))$ that 

\begin{eqnarray*}
    -\int_{0}^{t} \Psi'(s) \int_{\Sigma_{s}} fH d\sigma_{s} ds &=& - \int_{\Omega_{t}} \frac{\partial \psi }{\partial \nu} fH d\Omega \\
    &\leq& \int_{\Omega_{t}} \psi \left( 2 \phi_{s} H + \frac{n-2}{n-1} fH^{2} \right) d\Omega \\
    &=& \int_{\Omega_{t}} \psi \left( 2 \phi_{s} + \frac{n-2}{n-1} fH \right) |\nabla u| d\Omega \\
    &=& \int_{0}^{t} \Psi(s) \int_{\Sigma_{s}} \left( \frac{n-2}{n-1} fH + 2 \phi_{s} \right) d\sigma ds
\end{eqnarray*}

for $\Psi\in C^{0,1}_{0}((0,t))$. Fix $0 \leq t_{1} < t_{2}$, and for $0 < \delta < \frac{1}{2} (t_{2}-t_{1}) $, choose

\begin{equation*}
    \Psi(s) = \begin{cases} 0 & s \in (0,t_{1})  \\
    \frac{1}{\delta} (s- t_{1}) & s \in (t_{1}, t_{1} + \delta) \\
    1 & s \in (t_{1} + \delta, t_{2} - \delta) \\
    \frac{1}{\delta} (t_{2}-s) & s \in (t_{2} - \delta, t_{2}) \\
    0 & s \in (t_{2},t).
    
    \end{cases}
\end{equation*}
Then 

\begin{equation*}
    \frac{1}{\delta} \int_{t_{2} - \delta}^{t_{2}} \int_{\Sigma_{s}} fH d\sigma ds - \frac{1}{\delta} \int_{t_{1}}^{t_{1}+\delta} \int_{\Sigma_{s}} fH d\sigma ds \leq \int_{t_{1}}^{t_{2}} \int_{\Sigma_{s}} \left( \frac{n-2}{n-1} fH + 2\phi_{s} \right) d\sigma ds.
\end{equation*}
The limit of the left-hand side as $\delta \rightarrow 0$ is well-defined for a.e. $0 < t_{1} < t_{2}$, and so for a.e. $0 < t_{1} < t_{2}$ we have

\begin{eqnarray}
    \int_{\Sigma_{t_{2}}} fH d\sigma - \int_{\Sigma_{t_{1}}} fH d\sigma &\leq& \int_{t_{1}}^{t_{2}} \int_{\Sigma_{s}} \left(\frac{n-2}{n-1} fH + 2\phi_{s} \right) d\sigma ds \label{fH_upper_bound}.
\end{eqnarray}
To extend this inequality to all positive times, for any $0 < t_{1} < t_{2}$ we choose sequences $t_{j} \nearrow t_{1}$ and $\tilde{t}_{j} \nearrow t_{2}$ such that \eqref{fH_upper_bound} holds for the times $t_{j}, \tilde{t}_{j}$. By the second property in Theorem \ref{imcf_properties}, we have that 

\begin{equation*}
    \Sigma_{t_{j}} \rightarrow \Sigma_{t_{1}} \hspace{2cm} \text{  in  } \hspace{1cm} C^{1,\alpha}, \\
\end{equation*}
resp. for $\tilde{t}_{j},t_{2}$, away from a small singular set $Z$. Equation (1.13) in \cite{HI99} then implies $L^{1}$ convergence of the weak mean curvature, and so altogether we find that

\begin{eqnarray*}
    \lim_{j \rightarrow \infty} \int_{\Sigma_{t_{j}}} fH d\sigma &=& \int_{\Sigma_{t_{1}}} fH d\sigma, \\
    \lim_{j \rightarrow \infty} \int_{\Sigma_{\tilde{t}_{j}}} fH d\sigma &=& \int_{\Sigma_{t_{2}}} fH d\sigma.
\end{eqnarray*}
Therefore, applying \eqref{fH_upper_bound} to $t_{j}, \tilde{t}_{j}$, we may conclude

\begin{eqnarray*}
    \int_{\Sigma_{t_{2}}} fH d\sigma - \int_{\Sigma_{t_{1}}} fH d\sigma &\leq& \lim_{j \rightarrow \infty} \int_{t_{j}}^{\tilde{t}_{j}} \int_{\Sigma_{s}} \left( \frac{n-2}{n-1}  fH + 2 \phi_{s}\right) d\sigma ds \\
    &=& \int_{t_{1}}^{t_{2}} \int_{\Sigma_{s}} \left( \frac{n-2}{n-1} fH + 2 \phi_{s} \right) d\sigma ds.
\end{eqnarray*}
For any $0 < t_{1} < t_{2}$.
\end{proof}
\begin{proof}[Proof of Theorem \ref{minkowski_ineq}, I]
Consider the quantity

\begin{equation*}
    q(t)= \int_{\Sigma_{t}} fH d\sigma - n(n-1) \int_{\Omega_{t}} f d\Omega.
\end{equation*}
Combining \eqref{key_inequality} and \eqref{volume_growth}, we find for $0 < t_{1} < t_{2}$ that

\begin{eqnarray} \label{q}
    q(t_{1}) - q(t_{2}) &\leq& \int_{t_{1}}^{t_{2}}\left( \frac{n-2}{n-1} \int_{\Sigma_{s}} f H d\sigma + 2 \int_{\Sigma_{s}} \phi_{s} d\sigma - n^{2} \int_{\Omega_{s}} f d\Omega  \right) ds  \\
    &=& \int_{t_{1}}^{t_{2}} \left( \frac{n-2}{n-1} \int_{\Sigma_{s}} fH d\sigma -n(n-2) \int_{\Omega_{s}} f d\Omega \right) ds = \frac{n-2}{n-1} \int_{t_{1}}^{t_{2}} q(s) ds, \nonumber
\end{eqnarray}
where we once again applied \eqref{div} over the $C^{1}$ domain $\Omega_{s}$. Therefore, if $\Sigma_{0}$ is outer-minimizing then the area growth formula from Theorem \ref{imcf_properties} and Gronwall's Lemma yield that $Q(t_{2}) \leq Q(t_{1})$ for $t_{2} > t_{1} > 0$. By Theorem 1.2 in \cite{BHW12}, \footnote{Theorem 1.2 in the ArXiv version} we have that

\begin{equation*}
    Q(t) \geq (n-1) w_{n-1}^{\frac{1}{n-1}} \hspace{1cm} \text{ for all} \hspace{1cm} t>T,
\end{equation*}
and so $Q(t) \geq (n-1)w_{n-1}^{\frac{1}{n-1}}$ for all positive $t$. By the $C^{1,\alpha}$ convergence $\Sigma_{t} \rightarrow \Sigma^{+}_{0}$ as $t \searrow 0$, this implies

\begin{equation*}
    |\Sigma_{0}^{+}|^{-\frac{n-2}{n-1}} \left( \int_{\Sigma_{0}^{+}} f H d\sigma - n(n-1) \int_{\Omega_{0}^{+}} f d\Omega \right) \geq (n-1) w_{n-1}^{\frac{1}{n-1}}.
\end{equation*}
Since  $H_{\Sigma_{0}^{+} \setminus \Sigma_{0}}=0$ by Property 1.4 (iii), we have that  $\int_{\Sigma_{0}} |fH| d\sigma \geq \int_{\Sigma_{0}^{+}} |fH| d\sigma$, thereby proving inequality \eqref{minkowski}.

It remains to address the case of equality. Suppose that $Q(0)=Q(t)$ for every $t$, and let
\begin{equation*}
    t_{0}= \inf \{ t \in (0,\infty) | \Sigma_{t} \text{  is star-shaped} \}
\end{equation*}
It follows once again by Theorem 1.2 in \cite{LW17} that $\{ \Sigma_{t} \}_{t \in (t_{0},\infty)}$ is a smooth solution to \eqref{IMCF}. By the equality statement in \cite{BHW12}, this means that $\Sigma_{t}$ is a slice $\mathbb{S}^{n-1} \times \{ r(t) \}$ of $\mathbb{H}^{n}$ for each $t \in (t_{0},\infty)$. If $Q(t)=Q(0)$ for each $t$, then as in Section 8 of \cite{HI99} we must have that $\Sigma_{t}=\Sigma^{+}_{t}$ for each $t \in (0,\infty)$, i.e. the flow has no jumps. The $C^{1}$ convergence $\Sigma_{t} \rightarrow \Sigma^{+}_{t_{0}}=\Sigma_{t_{0}} $ as $t \searrow t_{0}$ from Theorem \ref{imcf_properties}(ii) then implies that $\Sigma_{t_{0}}= \{ r(t_{0}) \} \times \mathbb{S}^{n-1}$ is also a slice of \eqref{hyperbolic_space}.

Now suppose that $t_{0} > 0$. Since $\Sigma_{t_{0}} = \{ r(t_{0}) \} \times \mathbb{S}^{n-1}$, we must have that the support function $\phi_{t_{0}}(x) = \sinh (r(t_{0}))$ is a positive constant. By the $C^{1}$ convergence $\Sigma_{t} \rightarrow \Sigma_{t_{0}}$ as $t \nearrow t_{0}$, we get that $\phi > 0$ on $\Sigma_{t}$ for $t \in (t_{0} - \epsilon,t_{0})$. This is a contradiction, and so we conclude that $t_{0}=0$ and hence $\Sigma_{0}$ is a slice of $\mathbb{H}^{n}$.
\end{proof}

\section{The Pure Area Minkowski Inequality}

In this Section, we extend the inequality of De Lima and Girao \cite{LG12} to outer-minimizing $\Omega_{0}$. In their paper, these authors considered the functional 

\begin{equation} \label{P(t)}
    P(t) = |\Sigma_{t}|^{\frac{2-n}{n-1}} \left( \int_{\Sigma_{t}} f H d\sigma - (n-1) w_{n-1}^{-\frac{1}{n-1}} |\Sigma_{t}|^{\frac{n}{n-1}} \right)
\end{equation}
under smooth star-shaped IMCF. Notice that if

\begin{equation} \label{volume_diff}
    \frac{n}{w_{n-1}} \int_{\Omega_{0}} f d\Omega \geq \left( \frac{|\Sigma_{0}|}{w_{n-1}} \right)^{\frac{n}{n-1}},
\end{equation}
then $P(0) \geq Q(0)$, and so in this case inequality \eqref{minkowski} implies \eqref{minkowski_area} for mean-convex, star-shaped $\Sigma_{0}$. In fact, condition \eqref{volume_diff} is preserved under smooth IMCF due to the Heintze-Karcher inequality, so it suffices to show that the quantity $P(t)$ is monotone non-increasing over the largest interval $(0,\tilde{t})$ on which

\begin{equation*}
    \frac{n}{w_{n-1}} \int_{\Omega_{t}} f d\Omega < \left(\frac{|\Sigma_{t}|}{w_{n-1}} \right)^{\frac{n}{n-1}}.
\end{equation*}
For $t$ in this interval, we compute

\begin{eqnarray*}
    \frac{\partial}{\partial t} P(t) &=& \frac{2-n}{n-1} |\Sigma_{t}|^{\frac{2-n}{n-1}} \left( \int_{\Sigma_{t}} f H d \mu - (n-1) w_{n-1}^{-\frac{1}{n-1}} |\Sigma_{t}|^{\frac{n}{n-1}} \right)  \\
    & & + |\Sigma_{t}|^{\frac{2-n}{n-1}} \left( \frac{\partial}{\partial t} \int_{\Sigma_{s}} fH d\sigma - n w_{n-1}^{-\frac{1}{n-1}} |\Sigma_{t}|^{\frac{n}{n-1}} \right) \\
    &=& \frac{2-n}{n-1} |\Sigma_{t}|^{\frac{2-n}{n-1}} \left( \int_{\Sigma_{t}} f H d \sigma - (n-1) w_{n-1}^{-\frac{1}{n-1}} |\Sigma_{t}|^{\frac{n}{n-1}} \right) \\
    & & + |\Sigma_{t}|^{\frac{2-n}{n-1}} \left(\int_{\Sigma_{t}} \frac{n-2}{n-1} fH - \frac{|\mathring{h}|^{2}}{H} f d\sigma + 2n \int_{\Omega_{t}}f d\Omega- n w_{n-1}^{-\frac{1}{n-1}} |\Sigma_{t}|^{\frac{n}{n-1}}  \right) \\
    &\leq& |\Sigma_{t}|^{\frac{2-n}{n-1}} \left( \int_{\Sigma_{t}} - \frac{|\mathring{h}|^{2}}{H} f d\sigma + 2n \int_{\Omega_{t}} f d\Omega - 2 w_{n-1}^{-\frac{1}{n-1}} |\Sigma_{t}|^{\frac{n}{n-1}}  \right) \leq 0.
\end{eqnarray*}
Suppose $\tilde{t} < +\infty$. Since $\tilde{t}$ satisfies \eqref{volume_diff} we have $P(0) \geq \widetilde{Q}(\tilde{t}) \geq Q(\tilde{t}) \geq (n-1)w_{n-1}^{\frac{1}{n-1}}$. De Lima and Girao carry out the asymptotic analysis for the case $\tilde{t}=+\infty$ in Appendix A of \cite{LG12}. Their result may be straight-forwardly extended using results from the previous sections. 

\begin{proof}[Proof of Theorem \ref{minkowski_ineq}, II]

Given an outer-minimizing initial domain $\Omega_{0} \subset \mathbb{H}^{n}$, we consider two cases separately:

$(i)$ $\frac{n}{w_{n-1}} \int_{\Omega_{0}} f d\Omega \geq \left(\frac{|\Sigma_{0}|}{w_{n-1}} \right)^{\frac{n}{n-1}}$: since \eqref{minkowski} holds on $\Omega_{0}$ according to the previous section, we conclude that \eqref{minkowski_area} holds as well.

$(ii)$ $\frac{n}{w_{n-1}} \int_{\Omega_{0}} f d\Omega < \left(\frac{|\Sigma_{0}|}{w_{n-1}} \right)^{\frac{n}{n-1}}$: first, observe that if
\begin{equation*}
    \frac{n}{w_{n-1}} \int_{\Omega_{t_{0}}} f d\Omega \geq \left(\frac{|\Sigma_{t_{0}}|}{w_{n-1}} \right)^{\frac{n}{n-1}}
\end{equation*}
for some $t_{0} > 0$, then the formula $|\Sigma_{t}|=e^{(t - t_{0})}|\Sigma_{t_{0}}|$ and the growth estimate \eqref{volume_growth2} on $\int_{\Omega_{t}} f d\Omega$ imply that
\begin{equation*}
    \frac{n}{w_{n-1}} \int_{\Omega_{t}} f d\Omega \geq \left(\frac{|\Sigma_{t}|}{w_{n-1}} \right)^{\frac{n}{n-1}}, \hspace{1cm} t \in [t_{0},\infty).
\end{equation*}
Therefore, we let $(0,\widetilde{t})$, $\widetilde{t} \leq \infty$, be the largest interval over which

\begin{equation*}
    \frac{n}{w_{n-1}} \int_{\Omega_{t}} f d\Omega < \left(\frac{|\Sigma_{t}|}{w_{n-1}} \right)^{\frac{n}{n-1}},
\end{equation*}
and we show that $P(t)$ is monotone non-increasing over $(0,\widetilde{t})$. As in the proof of the volumetric Minkowski inequality, for any $0 \leq t_{1} < t_{2} < \widetilde{t}$ we have

\begin{eqnarray}
    \int_{\Sigma_{t_{2}}} fH d\sigma - \int_{\Sigma_{t_{1}}} fH d\sigma &\leq& \int_{t_{1}}^{t_{2}} \int_{\Sigma_{s}} \left( 2 \phi_{s} + \frac{n-2}{n-1} fH \right) d\sigma ds \nonumber \\
    &=& \int_{t_{1}}^{t_{2}}  \left( 2n \int_{\Omega_{s}} f d\Omega + \int_{\Sigma_{s}} \frac{n-2}{n-1} fH d\sigma \right) ds \label{P(t)1} \\
    &<& \int_{t_{1}}^{t_{2}}\left( 2 w_{n-1}^{-\frac{1}{n-1}} |\Sigma_{s}|^{\frac{n}{n-1}} + \int_{\Sigma_{s}} \frac{n-2}{n-1} fH d\sigma \right) ds, \nonumber
\end{eqnarray}
where the last line comes from the volume assumption. On the other hand, we can write

\begin{equation}
(n-1) w_{n-1}^{-\frac{1}{n-1}} |\Sigma_{t_{2}}|^{\frac{n}{n-1}} - (n-1) w_{n-1}^{-\frac{1}{n-1}} |\Sigma_{t_{1}}|^{\frac{n}{n-1}} = \int_{t_{1}}^{t_{2}} n w_{n-1}^{-\frac{1}{n-1}} |\Sigma_{s}|^{\frac{n}{n-1}} ds. \label{P(t)2}
\end{equation}
Combining, we get for $P(t)= \int_{\Sigma_{t}} fH d\sigma - (n-1) w_{n-1}^{-\frac{1}{n-1}} |\Sigma_{t}|^{-\frac{n}{n-1}}$ that

\begin{eqnarray*}
    \widetilde{q}(t_{2}) - \widetilde{q}(t_{1}) &\leq& \int_{t_{1}}^{t_{2}} \left( (2-n) w_{n}^{-\frac{1}{n-1}} |\Sigma_{s}|^{\frac{n}{n-1}} + \int_{\Sigma_{s}} \frac{n-2}{n-1} fH d\sigma \right) ds \\
    &=& \frac{n-2}{n-1} \int_{t_{1}}^{t_{2}} \widetilde{q}(s) ds.
\end{eqnarray*} 
Using Gronwall's Lemma once again, we conclude that $\widetilde{Q}(t_{2}) \leq \widetilde{Q}(t_{1})$ for $P(t)= \left( |\Sigma_{t}|\right)^{\frac{2-n}{n-1}} P(t)$, and so $P$ is monotone over $(0,\widetilde{t})$. If $\widetilde{t} < +\infty$, then for each $t \in (0,\widetilde{t})$ we have

\begin{eqnarray*}
P(t) \geq P(\widetilde{t}) \geq Q(\widetilde{t}) \geq (n-1)w_{n-1}^{\frac{1}{n-1}}.
\end{eqnarray*}
If $\widetilde{t}= +\infty$, then since $\Sigma_{t}$ solves the smooth flow for $t > T$, the asymptotic situation reduces to the one in \cite{LG12}. Altogether, we conclude that

\begin{equation}
    P(t) \geq (n-1) w_{n-1}^{\frac{1}{n-1}}, \hspace{1cm} \text{for all} \hspace{1cm} t > 0.
\end{equation}
As before, $C^{1,\alpha}$ convergence then yields

\begin{equation*}
    \left( |\Sigma_{0}^{+}| \right)^{-\frac{n-2}{n-1}} \left( \int_{\Sigma_{0}^{+}} f H d\sigma - (n-1)w_{n-1}^{-\frac{1}{n-1}} |\Sigma_{0}^{+}|^{\frac{n}{n-1}} \right) \geq (n-1)w_{n-1}^{\frac{1}{n-1}}.
\end{equation*}
Both inequality \eqref{minkowski_area} and the rigidity follow from the same argument as the previous section.
\end{proof}

\section*{Conflict of Interest Statement}
On behalf of all authors, the corresponding author states that there is no conflict of interest.

\section*{Data Availability Statement}
Data sharing is not applicable to this article as no datasets were generated or analyzed during the current study.
\printbibliography[title={References}]
\end{document}